\newtheorem{thm}{Theorem}[section]
\newtheorem{la}[thm]{Lemma}
\newtheorem{Defn}[thm]{Definition}
\newtheorem{Remark}[thm]{Remark}
\newtheorem{prop}[thm]{Proposition}
\newtheorem{cor}[thm]{Corollary}
\newtheorem{Example}[thm]{Example}
\newtheorem{Examples}[thm]{Examples}
\newtheorem{Number}[thm]{\!\!}
\newenvironment{defn}{\begin{Defn}\rm}{\end{Defn}}
\newenvironment{example}{\begin{Example}\rm}{\end{Example}}
\newenvironment{rem}{\begin{Remark}\rm}{\end{Remark}}
\newenvironment{numba}{\begin{Number}\rm}{\end{Number}}
\newenvironment{proof}{{\noindent\bf Proof.}}%
                  {\nopagebreak\hspace*{\fill}$\Box$\medskip\medskip\par}   
\newcommand{\Punkt}{\nopagebreak\hspace*{\fill}$\Box$}
\newcommand{\wb}{\overline}
\newcommand{\ve}{\varepsilon}
\newcommand{\wt}{\widetilde}
\newcommand{\isom}{\cong}
\newcommand{\mto}{\mapsto}
\newcommand{\N}{{\mathbb N}}
\newcommand{\bS}{{\mathbb S}}
\newcommand{\R}{{\mathbb R}}
\newcommand{\C}{{\mathbb C}}
\newcommand{\Z}{{\mathbb Z}}
\newcommand{\cE}{{\mathcal E}}
\newcommand{\cC}{{\mathcal C}}
\newcommand{\cW}{{\mathcal W}}
\newcommand{\cK}{{\mathcal K}}
\newcommand{\cV}{{\mathcal V}}
\newcommand{\cO}{{\mathcal O}}
\newcommand{\cS}{{\mathcal S}}
\newcommand{\cF}{{\mathcal F}}
\newcommand{\cT}{{\mathcal T}}
\newcommand{\cg}{{\mathfrak g}}
\newcommand{\ca}{{\mathfrak a}}
\newcommand{\wh}{\widehat}
\newcommand{\one}{{\bf 1}}
\newcommand{\sub}{\subseteq}
\DeclareMathOperator{\GL}{GL}
\DeclareMathOperator{\gU}{U}
\DeclareMathOperator{\gO}{O}
\DeclareMathOperator{\im}{im}
\DeclareMathOperator{\id}{id}
\newcommand{\sbull}{{\scriptscriptstyle \bullet}}
\DeclareMathOperator{\Diff}{Diff}
\DeclareMathOperator{\GermDiff}{GermDiff}
\DeclareMathOperator{\Hol}{Hol}
\DeclareMathOperator{\conv}{conv}
\DeclareMathOperator{\bsd}{bsd}
\DeclareMathOperator{\rk}{rk}
\DeclareMathOperator{\diam}{diam}
\DeclareMathOperator{\Germ}{Germ}
\DeclareMathOperator{\per}{per}
\newcommand{\dl}{{\displaystyle\lim_{\longrightarrow}}}
\newcommand{\pl}{{\displaystyle\lim_{\longleftarrow}}}
\begin{document}
$\;$\\[-23mm]
\begin{center}
{\Large\bf Homotopy groups of ascending unions of\\[2mm]
infinite-dimensional manifolds}\\[4mm]
{\bf Helge Gl\"{o}ckner}\vspace{1.5mm}
\end{center}
\begin{abstract}\vspace{2mm}
\noindent
Let $M$ be a
topological manifold modelled on
topological vector spaces,
which is the union
of an ascending sequence
$M_1\sub M_2\sub\cdots$
of such manifolds.
We formulate a mild
condition ensuring that
$\pi_k(M,p)=\dl\; \pi_k(M_n,p)$\vspace{-.5mm}
for all $k\in \N_0$ and $p\in M$.
This result is useful for Lie theory, because
many important examples of
infinite-dimensional Lie groups
can be expressed as ascending unions of
finite- or infinite-dimensional Lie groups
(whose homotopy groups may be easier to access).
Information on $\pi_0(G)=G/G_0$,
$\pi_1(G)$ and $\pi_2(G)$ is needed to understand
the Lie group extensions
${\bf 1}\to A\to\wh{G}\to G\to{\bf 1}$
of~$G$ with abelian kernels.
The above conclusion remains valid if
$\bigcup_{n\in \N}M_n$ is merely dense in~$M$
(under suitable hypotheses).
Also, ascending unions can be replaced
by (possibly uncountable) directed unions.\vspace{1.4mm}
\end{abstract}
{\footnotesize {\em Classification}:
22E65; 
57N20 
(Primary)
55N65; 
55P10; 
55P42; 
55Q05 (Secondary)
\\[2.5mm]
{\em Key words}:
Infinite-dimensional manifold,
infinite-dimensional Lie group,
directed union, ascending union,
direct limit, inductive limit,
direct limit group,
homotopy group,
path component,
fundamental group,
compact retractivity, compact regularity, direct limit chart,
approximation, weak homotopy type, Palais' theorem, weighted mapping group}\\[6mm]
{\bf Contents}\\[1.7mm]
{\footnotesize
1.\hspace*{3.4mm}Introduction and statement of results\,\dotfill\,\pageref{secintr}\\
2.\hspace*{3.3mm}Preliminaries and notation\,\dotfill\,\pageref{secprel}\\
3.\hspace*{3.3mm}Elementary observations\,\dotfill\,\pageref{sectrivia}\\
4.\hspace*{3.3mm}Technical preparations\,\dotfill\,\pageref{secprp}\\
5.\hspace*{3.3mm}The main result and first consequences\,\dotfill\,\pageref{secmainprf}\\
6.\hspace*{3.3mm}When the inclusion map is a weak homotopy
equivalence\,\dotfill\,\pageref{secpalais}\\
7.\hspace*{3.3mm}Applications to typical Lie groups that are directed unions
of Lie groups\\
\hspace*{6.35mm}or manifolds\,\dotfill\,\pageref{seclie}\\
8.\hspace*{3.3mm}Applications to typical Lie groups that contain a dense union
of Lie groups\,\dotfill\,\pageref{secdnsungp}\\[.5mm]
References\,\dotfill\,\pageref{secref}}\vspace{3.5mm}
\section{Introduction and statement of results}\label{secintr}
A classical result of Palais sheds light on the
homotopy groups of an open subset $U$ of a locally convex
topological vector space~$E$.
He considered a dense vector subspace~$E_\infty$
of~$E$, endowed with the ``finite topology''
(the final topology with respect
to the inclusion maps $F\to E_\infty$
of finite-dimensional vector subspaces~$F$),
and gave $U_\infty:=E_\infty\cap U$
the topology induced by~$E_\infty$.
Then the inclusion map $U_\infty\to U$ is
a weak homotopy equivalence, i.e.,
\[
\pi_k(U,p)\;\isom \; \pi_k(U_\infty,p)\quad\mbox{for each $k\in \N_0$
and $p\in U_\infty$}\vspace{2mm}
\]
(see \cite[Theorem~12]{Pa2}; cf.\ also \cite{Sva} if~$E$ is a Banach space).
Furthermore,
\[
\pi_k(U_\infty,p)\,=\,\dl_{F\in \cF_p}\,\pi_k(U\cap F,p)\,,\vspace{-1.2mm}
\]
where $\cF_p$ is the set of all finite-dimensional
vector subspaces $F\sub E_\infty$ such that $p\in F$
(see, e.g., \cite[Lemma~II.9]{NeH}).\\[2.5mm]
In this article,
we prove certain non-linear variants of these facts,
in situations where linear spaces
have been replaced by topological manifolds (or certain
more general topological spaces).\\[2.5mm]
All topological spaces in this article
are assumed Hausdorff.
Until further notice,
let $M$ be a topological manifold
modelled on (not necessarily locally convex)
topological vector spaces\footnote{Thus $M$ is a Hausdorff topological
space and for each
$p\in M$, there exists an open neighbourhood
$U\sub M$ of $p$,
a topological vector space~$E$
and a homeomorphism $\phi\colon U\to V$ (called a ``chart'')
from~$U$ onto an open subset $V\sub E$.};
we then simply call $M$ a \emph{manifold}.\footnote{Likewise,
(possibly infinite-dimensional) Lie groups modelled on
locally convex spaces (as in \cite{RES}, \cite{GaN} and \cite{NeS}; cf.\ also
\cite{Mil}) will simply be called
``Lie groups.''}
Also, let $(M_\alpha)_{\alpha\in A}$
be an upward directed family
of such manifolds, such that
$M_\infty:=\bigcup_{\alpha\in A}M_\alpha$
is dense in~$M$ and all inclusion
maps $M_\alpha\to M$
and $M_\alpha\to M_\beta$ (for $\alpha\leq \beta$)
are continuous (but not necessarily embeddings).
We describe conditions ensuring that
\begin{equation}\label{desiderat}
\pi_k(M,p)\;=\; \dl_{\alpha\in A_p}\pi_k(M_\alpha,p)
\end{equation}
for all $k\in \N_0$ and $p\in M_\infty$,
where $A_p:=\{\alpha\in A\colon p\in M_\alpha\}$.\\[2.5mm]
If $M=\bigcup_{\alpha\in A}M_\alpha$
and $M$ is
\emph{compactly retractive} in the
sense that each\linebreak
compact set $K\sub M$
is a compact subset of some $M_\alpha$,
then (\ref{desiderat}) is quite obvious
(Proposition~\ref{babcoreg};
compare
\cite[Remark~3.9]{FUN}
and \cite[Lemma~I.1]{NeF}
for special cases,
as well as many works
on homotopy theory
or $K$-theory).\\[2.5mm]
Our goal is to get beyond this limited situation.
To explain our results,
let us assume first that
$M=\bigcup_{\alpha\in A}M_\alpha$.
In this case, we can prove~(\ref{desiderat})
provided that~$M$ \emph{admits weak direct limit charts},
i.e., each point $q\in M$ is contained
in the domain~$U$ of a so-called weak direct limit
chart $\phi\colon U\to V$.
\begin{defn}\label{defweakdl}
\hspace*{-1mm}A \emph{weak direct limit chart}
of $M\!\!=\!\bigcup_{\alpha\in A}\!M_\alpha$
is a chart \mbox{$\phi\colon \!U\!\to\!V$} of~$M$ taking
$U$ homeomorphically onto
an open subset~$V$ of a topological vector space~$E$,
such that there exist $\alpha_0\in A$, charts
$\phi_\alpha\colon U_\alpha\to V_\alpha\sub E_\alpha$
of $M_\alpha$ onto open subsets $V_\alpha\sub E_\alpha$
of topological vector spaces~$E_\alpha$
for $\alpha\geq \alpha_0$,
and continuous linear maps $\lambda_\alpha\colon E_\alpha\to E$
and $\lambda_{\beta,\alpha}\colon E_\alpha\to E_\beta$
(if $\beta\geq\alpha\geq \alpha_0$)
satisfying the following:
\begin{itemize}
\item[(a)]
For all $\alpha\geq\alpha_0$, we have $U_\alpha\sub U$
and $\phi|_{U_\alpha}=\lambda_\alpha\circ\phi_\alpha$;
\item[(b)]
For all $\beta\geq \alpha\geq\alpha_0$, we have
$U_\alpha\sub U_\beta$
and $\phi_\beta|_{U_\alpha}=\lambda_{\beta,\alpha}
\circ\phi_\alpha$;
\item[(c)]
$U=\bigcup_{\alpha\geq\alpha_0}U_\alpha$.
\end{itemize}
\end{defn}
By (a) and (b), each $\lambda_\alpha$
and $\lambda_{\beta,\alpha}$ is injective;
after replacing $E_\alpha$ with $\im(\lambda_\alpha)$
(equipped with the topology making $\lambda_\alpha$
a homeomorphism onto its image),
we may therefore assume henceforth that
$E_\alpha\sub E$ for each $\alpha\geq \alpha_0$,
$E_\alpha\sub E_\beta$ for all $\beta\geq\alpha\geq\alpha_0$,
and that $\lambda_\alpha$ and $\lambda_{\beta,\alpha}$
are the inclusion maps. Then $E=\bigcup_{\alpha\geq \alpha_0}E_\alpha$,
as a consequence of~(c) and~(a).\\[3mm]
Our results comprise:\vspace{-.5mm}
\begin{thm}\label{babythm}
Assume that a manifold $M$ is a directed union
$M=\bigcup_{\alpha\in A}M_\alpha$
of manifolds $M_\alpha$, such that
all inclusion maps $M_\alpha\to M$
and $M_\alpha\to M_\beta$\linebreak
$($for $\alpha\leq \beta)$
are continuous.
If $M$ admits weak direct limit charts, then
\[
\pi_k(M,p)\;=\; \dl_{\alpha\in A_p}\pi_k(M_\alpha,p)
\quad \mbox{for all $\, k\in \N_0$ and $\, p\in M$.}
\]
\end{thm}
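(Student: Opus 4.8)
The plan is to show that the inclusions $M_\alpha\hookrightarrow M$ induce a bijection
\[
\Phi\colon \dl_{\alpha\in A_p}\pi_k(M_\alpha,p)\;\to\;\pi_k(M,p)
\]
by checking surjectivity and injectivity separately, both of which I would reduce to a single \emph{pushing principle}: given a continuous map $f\colon P\to M$ from a compact polyhedron $P$ (a triangulated sphere $S^k$, or a cylinder $S^k\times[0,1]$) such that $f$ restricted to a subcomplex $P_0\sub P$ lifts to a continuous map $P_0\to M_{\alpha_0}$, there exist $\beta\geq\alpha_0$ and a homotopy $f\simeq g$ rel $P_0$ such that $g$ lifts continuously to $M_\beta$. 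Surjectivity is then the case $P=S^k$, $P_0=\{\ast\}$ (the basepoint $p$ lies in some $M_{\alpha_0}$), since the principle places any based map into a single $M_\beta$. Injectivity is the case $P=S^k\times[0,1]$, $P_0=S^k\times\{0,1\}$: applied to a null-homotopy in $M$ of a based map originating in $M_\alpha$, it produces (rel the two ends, which already lift to $M_\alpha$) a null-homotopy inside some $M_\beta$, whence the class dies already in $\pi_k(M_\beta,p)$.

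The engine for the pushing principle is the hypothesis that $M$ admits weak direct limit charts, which I would use to localize and reduce the problem chart by chart to the linear situation of Palais. Choosing a Lebesgue number for the cover of $P$ by preimages of weak direct limit chart domains, I subdivide $P$ so finely that each closed simplex $\sigma$ maps into a single chart domain $U$, with chart $\phi\colon U\to V\sub E$, where $V=\bigcup_{\alpha\geq\alpha_0}V_\alpha$, $V_\alpha=\phi(U_\alpha)\sub E_\alpha$ and $E=\bigcup_{\alpha\geq\alpha_0}E_\alpha$. Read through $\phi$, the task becomes to homotope $\phi\circ f|_\sigma\colon\sigma\to V$ into some $V_\beta$. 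I would proceed by induction over the skeleta: having arranged that $f$ carries the boundary of a simplex into one stage $V_\gamma$ (using directedness of $A$ to raise the finitely many stages attached to its vertices and faces to a common $\gamma$), I extend the deformation across $\sigma$ by a straight-line homotopy inside $V$, tracking that the result lands in a possibly larger $V_\beta$. This is precisely the local mechanism behind $\pi_k(U_\infty,p)=\dl\,\pi_k(U\cap F,p)$ in the Palais/Neeb picture, now transported to $M$ through the charts.

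The main obstacle lies entirely in executing this skeletal extension, and it is twofold. First, the modelling spaces are not assumed locally convex, so straight-line homotopies need not remain in the open set $V$; I would have to subdivide finely enough that, on each small simplex, the relevant image points lie in a common $E_\gamma$ together with a chart neighbourhood chosen so that the convex combinations used to fill the simplex stay both inside $V$ and inside the stage $E_\beta$ into which they are pushed. Second, and more delicate, the filling must be carried out consistently across simplices sharing a face and across overlaps of different chart domains: one must ensure the stage $\beta$ and the homotopies agree on common faces so that the local deformations glue to a single global homotopy with image in one $M_\beta$. Directedness of $A$ lets me raise finitely many competing stages to one $\beta$, and performing the induction in a fixed order, always rel the already-treated lower skeleton, is what forces the local choices to be compatible.

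Finally, I would assemble the two halves as indicated above, using that continuity of the inclusions $M_\alpha\to M_\beta$ and $M_\alpha\to M$ sends homotopies in $M_\alpha$ to homotopies downstream, so that $\Phi$ is well defined and the recovered null-homotopies genuinely witness triviality in the direct limit. This should be compared with the trivial case recorded in Proposition~\ref{babcoreg}, where compact retractivity makes every compact image land in a single $M_\alpha$ at once; the whole content here is to reach the same conclusion using only the local, linear approximation supplied by the weak direct limit charts.
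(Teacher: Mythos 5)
Your proposal takes essentially the same approach as the paper: your ``pushing principle'' is precisely the paper's approximation machinery (Lemmas~\ref{pivotal} and~\ref{pivo2}), established there exactly as you sketch --- fine subdivision via Lebesgue numbers so that simplices land in chart domains (in fact in cores, to control convex combinations in the absence of local convexity), induction over the skeleta (the rank of the complex), filling of top simplices by convex combinations of boundary values (the Filling Lemma~\ref{basicfill}), and compactness plus directedness to reach a single stage --- while your surjectivity/injectivity split is the paper's proof of Theorem~\ref{newthm}, of which Theorem~\ref{babythm} is the special case obtained via Theorem~\ref{manthm} and Example~\ref{newd}\,(i). The only slip to repair is in the injectivity step: the subcomplex should be $P_0=(S^k\times\{0,1\})\cup(\{\ast\}\times[0,1])$ rather than $S^k\times\{0,1\}$, so that the homotopy produced inside $M_\beta$ is a \emph{based} homotopy (this is exactly the paper's choice $\cE=(C\times[0,1])\cup(X\times\{0,1\})$).
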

\begin{rem}
The concept of a weak
direct limit chart was introduced in~\cite{COM}
in the special case of ascending sequences
$M_1\sub M_2\sub\cdots$ of manifolds
modelled on locally convex spaces.
In these studies,
a certain strengthened concept of
``direct limit chart'' provided
the key to an understanding
of the direct limit properties
of ascending unions $G=\bigcup_{n\in \N}G_n$
of Lie groups
(all prominent examples of which admit
direct limit charts).
Following~\cite{COM},
we might call a weak direct limit chart
a \emph{direct limit chart}
if, moreover, $E$ and each $E_\alpha$
is locally convex and $E=\dl\,E_\alpha$\vspace{-.8mm}
as a locally convex space.
However, this additional property
is irrelevant for our current ends.
\end{rem}
\begin{rem}\label{rem14}
We also mention that
if $M$ and each $M_\alpha$
is a $C^r$-manifold with $r\geq 1$
(in the sense of \cite{RES} or \cite{NeS}),
$\phi\colon U\to V$ and each $\phi_\alpha$
a $C^r$-diffeomorphism,
and $p\in U$,
then there are canonical choices for
the spaces~$E$ and $E_\alpha$,
namely the tangent spaces
$E:=T_p(M)$ and $E_\alpha=T_p(M_\alpha)$.
\end{rem}
\begin{rem}
In contrast to topological
manifolds, the $C^r$-manifolds used in this article
are always assumed to be pure
manifolds (when $r\geq 1$),
i.e., they are modelled on a single
locally convex space.
\end{rem}
If $M_\infty$ is dense,
but not all of~$M$,
then ``well-filled charts''
are an appropriate
substitute
for weak direct limit charts.
The following notation will be useful:
\begin{defn}
If $E$ is a vector space, $Y\sub E$
and $n\in \N$ is fixed, we let $\conv_n(Y)\sub E$
be the set of all convex combinations of
the special form
\[
t_1y_1+\cdots+t_ny_n\,,
\]
where $y_1,\ldots, y_n\in Y$ and $t_1,\ldots, t_n\geq0$
such that $\sum_{j=1}^n t_j=1$.
Thus $\bigcup_{n\in \N}\conv_n(Y)$
is the convex hull $\conv(Y)$ of~$Y$.
Given $X,Y\sub E$, we set
\[
\conv_2(X,Y):=\{tx+(1-t)y\colon x\in X, y\in Y, t\in [0,1]\}\,.
\]
Then\vspace{-5.3mm}
\begin{equation}\label{convindu}
\conv_2(X,\conv_n(X))=\conv_{n+1}(X)\quad\mbox{for all $n\in \N$.}
\end{equation}
\end{defn}
Actually, we can leave the framework of
manifolds, and consider more general
topological spaces (like manifolds with
boundary or manifolds with corners).
The following definition captures exactly what
we need.
\begin{defn}\label{defwlfl}
Let $M$ be a topological space
and $(M_\alpha)_{\alpha\in A}$ be a directed family of
topological spaces such that
$M_\infty:=\bigcup_{\alpha\in A}M_\alpha$
is dense in~$M$ and
all inclusion maps $M_\alpha\to M$ and
$M_\alpha\to M_\beta$ (for $\alpha\leq\beta$)
are continuous.
We say that a homeomorphism
$\phi\colon U\to V\sub E$
from an open subset
$U \sub M$ onto an arbitrary subset~$V$
of a topological vector space~$E$
is a \emph{well-filled chart} of~$M$
if there exist
$\alpha_0\in A$,
homeomorphisms
$\phi_\alpha\colon U_\alpha\to V_\alpha\sub E_\alpha$
from open subsets $U_\alpha\sub M_\alpha$
onto subsets $V_\alpha$ of certain topological
vector spaces $E_\alpha$ for $\alpha\geq\alpha_0$,
and continuous linear maps $\lambda_\alpha\colon E_\alpha\to E$,
$\lambda_{\beta,\alpha}\colon E_\alpha\to E_\beta$
(for $\beta\geq\alpha\geq\alpha_0$)
such that (a) and (b) from Definition~\ref{defweakdl}
hold as well as the following conditions
(d), (e) and (f):
\begin{itemize}
\item[(d)]
$U_\infty:=\bigcup_{\alpha\geq\alpha_0}U_\alpha=U\cap M_\infty$.
\item[(e)]
There exists a non-empty (relatively) open set $V^{(2)}\sub V$
such that $\conv_2(V^{(2)})\sub V$
and $\conv_2(V_\infty^{(2)})\sub V_\infty$,
where $V_\infty:=\bigcup_{\alpha\geq\alpha_0}V_\alpha$
and $V_\infty^{(2)}:=V^{(2)}\cap V_\infty$.
\item[(f)]
For each $\alpha\geq\alpha_0$ and compact set
$K\sub V_\alpha^{(2)}:=V^{(2)}\cap V_\alpha$,
there exists $\beta\geq\alpha$ such that
$\conv_2(K)\sub V_\beta$.
\end{itemize}
Then
$U^{(2)}:=\phi^{-1}(V^{(2)})$ is an open
subset of~$U$, called a \emph{core} of~$\phi$.
For later use, we set
$U^{(2)}_\infty:=\phi^{-1}(V^{(2)}_\infty)$;
then
$U_\alpha^{(2)}:=U^{(2)}_\infty\cap U_\alpha=\phi_\alpha^{-1}(V_\alpha^{(2)})=
\phi_\alpha^{-1}(V^{(2)}\cap V_\alpha)$
is open in $M_\alpha$ and $U^{(2)}_\infty=\bigcup_{\alpha\geq \alpha_0}U^{(2)}_\alpha$.
Also, we abbreviate $E_\infty:=\bigcup_{\alpha\geq \alpha_0}E_\alpha$.\vspace{.2mm}
If cores of well-filled charts
cover~$M$, then $M$ is said to \emph{admit
well-filled charts}.
\end{defn}
\begin{rem}
We hasten to add
that we assumed
in~(e) and~(f)
that
$E_\alpha\sub E$ and that
$\lambda_\alpha$, $\lambda_{\beta,\alpha}$
are the respective inclusion maps
(which we always may as explained after
Definition~\ref{defweakdl}).
\end{rem}
\begin{rem}
Note that $U_\infty$ is dense in~$U$ because~$U$
is open and $M_\infty$ is dense in~$M$.
Consequently, $V_\infty$ is dense in~$V$.
\end{rem}
\begin{rem}
If $V_\alpha$ is open in~$E_\alpha$
for each $\alpha\geq \alpha_0$,
or if each $V_\alpha$ is convex,
then condition~(f) follows from~(e)
and hence can be omitted.
\end{rem}
The reader may find the
concept of a well-filled chart
somewhat elusive. To make it more
tangible, let us consider
relevant special cases:
\begin{example}\label{newd}
If (a), (b) and (d) hold,
$V$ is open in~$E$,
$E_\infty\cap V=V_\infty$
and each $V_\alpha$ is convex or open
in~$E_\alpha$ (to ensure (f)),
then $\phi$ is a well-filled chart.
[In fact, pick any $v\in V$ and
balanced, open $0$-neighbourhood
$W\sub E$ such that $W+W\sub V-v$;
then $V^{(2)}:=v+W$ satisfies~(e).] \,In particular:
\begin{itemize}
\item[(i)]
Every weak direct limit chart
is a well-filled chart.
\item[(ii)]
$\phi$ is a well-filled chart if (a), (b) and (d) hold,
$V$ is open and $E_\alpha\cap \hspace*{.2mm}V =V_\alpha$
for each $\alpha\geq \alpha_0$.
\end{itemize}
\end{example}
\begin{example}\label{newDD}
If (a), (b) and (d) hold,
$V$ is convex,
$E_\infty\cap V=V_\infty$
and each $V_\alpha$ is convex or
open in~$E_\alpha$,
then $\phi$ is a well-filled chart
(with $V^{(2)}:=V$).
\end{example}
We shall obtain
the following far-reaching
generalization of
Theorem~\ref{babythm}.
\begin{thm}\label{manthm}
\hspace*{-.2mm}Consider\hspace*{-.2mm}
a\hspace*{-.2mm} topological\hspace*{-.2mm} space\hspace*{-.1mm} $M$\hspace*{-.72mm}
and\hspace*{-.2mm} a\hspace*{-.2mm} directed\hspace*{-.2mm} family\hspace*{-.2mm}
$(M_\alpha)_{\alpha\in A}$ of
topological spaces whose union
$M_\infty:=\bigcup_{\alpha\in A}M_\alpha$
is dense in~$M$.
Assume that all inclusion maps $M_\alpha\to M$
and $M_\alpha\to M_\beta$ $($for $\alpha\leq\beta)$
are continuous.
If $M$ admits well-filled charts,
then
\[
\pi_k(M,p)\;=\; \dl_{\alpha\in A_p} \,\pi_k(M_\alpha,p)\quad
\mbox{for all $k\in \N_0$ and $p\in M_\infty$.}
\]
\end{thm}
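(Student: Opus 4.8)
The plan is to reduce Theorem~\ref{manthm} to the comparison of homotopy groups provided by Theorem~\ref{babythm}, by showing that the weak homotopy type is unchanged both when passing from $M$ to the dense subspace $M_\infty$ and when passing from each $M_\alpha$ to the portions visible through well-filled charts. Concretely, I would first establish the two ``obvious'' ends of the argument. On the one hand, Proposition~\ref{babcoreg} (compact retractivity) and the direct-limit functoriality of $\pi_k$ give, once we know $M_\infty=\bigcup_{\alpha}M_\alpha$ behaves like a direct limit on compacta, that $\pi_k(M_\infty,p)=\dl_{\alpha\in A_p}\pi_k(M_\alpha,p)$. On the other hand, the real content is to prove that the dense inclusion $\iota\colon M_\infty\to M$ is a \emph{weak homotopy equivalence}, i.e.\ induces isomorphisms $\pi_k(M_\infty,p)\isom\pi_k(M,p)$ for all $k$ and all $p\in M_\infty$. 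Granting both, the theorem follows by composing the two isomorphisms.

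To prove that $\iota$ is a weak homotopy equivalence I would argue locally, using the well-filled charts to set up approximation. Fix a continuous map $f\colon S^k\to M$ (or a relative map $(D^k,S^{k-1})\to(M,M_\infty)$ for injectivity); by compactness of the sphere and the fact that cores of well-filled charts cover $M$, I would cover the image by finitely many cores $U^{(2)}$, choose a fine triangulation of $S^k$ subordinate to this cover, and then homotope $f$ simplex by simplex into $M_\infty$. The mechanism is the convexity structure encoded in condition~(e): inside a well-filled chart, $\phi$ transports $U$ to $V\sub E$ with $\conv_2(V^{(2)})\sub V$ and $\conv_2(V^{(2)}_\infty)\sub V_\infty$, so I can first perturb the vertex images into the dense set $V_\infty$ (possible since $V_\infty$ is dense in $V$), and then use straight-line homotopies $t\mapsto tx+(1-t)y$ to contract each image simplex down, filling it by iterated application of~(\ref{convindu}) so that the whole image lands in $V_\infty$. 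Property~(f) is what guarantees that the compact image of a simplex, after being pushed into $V_\infty$, actually lies in a single $V_\beta$, so that the homotopy stays inside $M_\infty$ and the construction is compatible with the direct-limit bookkeeping.

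The technical heart, and the step I expect to be the main obstacle, is making these straight-line homotopies \emph{globally} well-defined across chart overlaps and simultaneously continuous, while controlling which $M_\alpha$ the image lands in at each stage. Within a single core the convexity from~(e) is tailor-made for the inductive $\conv_{n+1}$ argument via~(\ref{convindu}), but a generic simplex of the triangulation will meet several cores, so I would need to order the simplices by dimension (a standard skeleton-by-skeleton induction over a CW/simplicial structure) and, at each step, only deform the map on the interior of a simplex whose boundary has already been pushed into $M_\infty$, choosing a single core containing that simplex's image after subdividing finely enough. Keeping the homotopies rel the already-treated boundary, and verifying that the non-openness of $V$ (it is merely ``well-filled,'' not open) does not obstruct the convex deformations, is where the conditions (d)--(f) must be used with care. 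I anticipate that the \textbf{technical preparations} of Section~4 (referenced in the contents) furnish exactly the local approximation lemmas needed to turn this sketch into a rigorous simplex-by-simplex construction; assuming those, the surjectivity of $\iota_*$ on $\pi_k$ comes from approximating spheres and the injectivity from approximating disks bounding null-homotopies, completing the proof that $\iota$ is a weak homotopy equivalence.
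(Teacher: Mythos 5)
There is a genuine gap, and it sits in the very first step of your decomposition. You propose to factor the desired isomorphism as $\pi_k(M,p)\isom\pi_k(M_\infty,p)\isom\dl_{\alpha\in A_p}\pi_k(M_\alpha,p)$, with the second isomorphism supplied by Proposition~\ref{babcoreg} ``once we know $M_\infty=\bigcup_\alpha M_\alpha$ behaves like a direct limit on compacta.'' But that is precisely compact retractivity of $M_\infty=\bigcup_\alpha M_\alpha$ (for whichever topology you place on $M_\infty$), and it is \emph{not} among the hypotheses of Theorem~\ref{manthm}, nor can it be derived from them: with the topology induced from $M$ it fails already for a dense union of finite-dimensional subspaces of a Banach space (a convergent sequence spanning an infinite-dimensional subspace is compact but lies in no step), and even when $M_\infty=M$ the paper exhibits unions covered by the theorem that are not compactly retractive (Example~\ref{exx6}, a non-regular (LB)-space situation where Theorem~\ref{babythm} applies but Proposition~\ref{babcoreg} does not). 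This is exactly why the paper's weak-homotopy-equivalence statement, Proposition~\ref{genpalais}, carries the \emph{additional} hypothesis that $(M_\infty,\cO)=\bigcup_\alpha M_\alpha$ be compactly retractive, and why it is deduced \emph{from} Theorem~\ref{manthm} rather than used to prove it. Your plan would, if carried out, prove Proposition~\ref{genpalais} with its hypothesis~(b) deleted, which is too strong; in the paper the logical order is the reverse of yours (Theorem~\ref{babythm} is also a corollary of Theorem~\ref{manthm}, via Example~\ref{newd}\,(i), not an ingredient).

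The repair is to aim your simplex-by-simplex construction --- which is genuinely close to the paper's machinery (the Filling Lemma~\ref{basicfill}, a rank induction over the skeleton, vertex perturbation into the dense set, straight-line homotopies controlled by conditions~(e) and~(f)) --- directly at the canonical map $\psi\colon\dl_{\alpha\in A_p}\pi_k(M_\alpha,p)\to\pi_k(M,p)$, never topologizing $M_\infty$ at all. For surjectivity one shows that any continuous $\gamma_0\colon|\Sigma|\to M$ with $\gamma_0|_C=p$ is homotopic rel~$C$ to a map that is continuous \emph{into a single} $M_\beta$: condition~(f) puts the (compact) filled image of each top-dimensional simplex into one $V_\beta$, and directedness combines the finitely many simplices. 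For injectivity one applies the same approximation to a homotopy $|\Sigma|\times[0,1]\to M$ between two $M_\alpha$-valued maps, rel $(C\times[0,1])\cup(|\Sigma|\times\{0,1\})$, landing it in a single $M_\beta$; this is the paper's Theorem~\ref{newthm}, proved via Lemmas~\ref{pivotal} and~\ref{pivo2}, from which Theorem~\ref{manthm} is immediate. The subtle point your sketch underestimates is that it does not suffice to push images into $M_\infty$; every approximation and every connecting homotopy must be continuous as a map into one fixed step $M_\beta$, since without compact retractivity membership of the image in $M_\infty$ gives no factorization through the direct limit.
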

We shall also see that the inclusion map
$M_\infty\to M$ is a weak homotopy equivalence
for suitable topologies on $M_\infty$ (Proposition~\ref{genpalais}).
As a very special case, we obtain a generalization
of Palais' original result:\footnote{Compare \cite[Theorem~13
and end of p.\,1]{Pa2}
for indications of related
generalizations.}
\begin{cor}\label{corpal}
Let $E$ be a topological
vector space $($which need not be locally convex$)$
and $U\sub E$ be a subset such that
\begin{itemize}
\item[\rm(a)]
$U$ is open; or:
\item[\rm(b)]
$U$ is semi-locally convex, i.e.,
each $p\in U$ has a neighbourhood $($relative~$U)$
which is a convex subset of~$E$.
\end{itemize}
Let $E_\infty$ be a vector subspace of~$E$
such that $U_\infty:=U\cap E_\infty$ is dense in~$U$.
Endow $U_\infty$
with the topology~$\cO$
induced by the finite topology on~$E_\infty$.
Then the inclusion map
$(U_\infty,\cO)\to U$
is a weak homotopy equivalence.
Furthermore,
\[
\pi_k(U,p)\;=\; \dl_{F\in\cF_p}\pi_k(U\cap F,p)
\quad \mbox{for each $k\in \N_0$ and $p\in U_\infty$,}\vspace{-1mm}
\]
where $\cF_p$ is the set of finite-dimensional
vector subspaces
$F\!\sub \!E_\infty$ with $p\in F$.
\end{cor}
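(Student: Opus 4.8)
The plan is to realize both assertions as instances of Theorem~\ref{manthm}, using the directed family indexed by the finite-dimensional subspaces of~$E_\infty$. Let $A$ be the set of all finite-dimensional vector subspaces $F\sub E_\infty$, directed upward by inclusion (note that $F+F'$ is again finite-dimensional), and set $M_F:=U\cap F$ with the topology induced by~$E$ (which is the usual topology of the finite-dimensional space~$F$, as $E$ is Hausdorff, restricted to $U\cap F$). Then $M_\infty=\bigcup_{F\in A}(U\cap F)=U\cap E_\infty=U_\infty$ is dense in $M:=U$ by hypothesis, and all inclusions $M_F\to U$ and $M_F\to M_{F'}$ (for $F\sub F'$) are continuous. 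Under these identifications $A_p=\cF_p$ and $M_\alpha=U\cap F$, so the displayed direct limit formula is exactly the conclusion of Theorem~\ref{manthm}. Hence everything reduces to covering~$U$ by cores of well-filled charts.

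In both cases I would use the single chart $\phi:=\id_U\colon U\to V:=U\sub E$, with model data $E_F:=F$, $\phi_F:=\id\colon U_F:=U\cap F\to V_F:=U\cap F$, and all $\lambda_F$, $\lambda_{F',F}$ the inclusion maps. Conditions (a), (b), (d) of Definition~\ref{defwlfl} are then immediate, and $V_\infty=\bigcup_F(U\cap F)=U_\infty=E_\infty\cap V$. What differs is the choice of core. In case~(a), where $U$ is open, I would fix $q\in U$, choose a balanced open $0$-neighbourhood $W\sub E$ with $W+W\sub U-q$, and set $V^{(2)}:=q+W$; since $W$ is balanced, $\conv_2(q+W)\sub q+(W+W)\sub U=V$, and intersecting with the subspace~$E_\infty$ gives $\conv_2(V_\infty^{(2)})\sub V_\infty$, so that (e) holds, while (f) is automatic because each $V_F=U\cap F$ is open in~$F$; this is Example~\ref{newd}. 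In case~(b), where $U$ is semi-locally convex, I would fix $p\in U$, pick a convex relative neighbourhood $N$ of~$p$ in~$U$, and take $V^{(2)}:=\Int_U(N)$, a relatively open set containing~$p$. Because $N$ is convex and $\Int_U(N)\sub N\sub U$, one gets $\conv_2(V^{(2)})\sub N\sub U=V$ and, upon intersecting with~$E_\infty$, also $\conv_2(V_\infty^{(2)})\sub N\cap E_\infty\sub V_\infty$, so (e) holds; and for any compact $K\sub V^{(2)}\cap F$ one has $\conv_2(K)\sub N\cap F\sub U\cap F=V_F$, giving (f) with $F'=F$. In either case the cores obtained as $q$ (resp.~$p$) ranges over~$U$ cover~$U$, so $M=U$ admits well-filled charts, and Theorem~\ref{manthm} yields $\pi_k(U,p)=\dl_{F\in\cF_p}\pi_k(U\cap F,p)$ for all $k\in\N_0$ and $p\in U_\infty$. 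This is the second assertion.

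For the weak homotopy equivalence I would invoke Proposition~\ref{genpalais}; it can also be seen directly. The inclusion $j\colon(U_\infty,\cO)\to U$ is continuous because the finite topology on~$E_\infty$ is final with respect to the continuous inclusions $F\to E$, so $(E_\infty,\text{finite})\to E$ and its restriction to $U_\infty$ are continuous. The essential point is that $(U_\infty,\cO)$ is compactly retractive as the union $\bigcup_F(U\cap F)$: every compact subset of $(E_\infty,\text{finite})$ is contained in a finite-dimensional subspace (an infinite linearly independent subset would be closed and discrete in the finite topology, contradicting compactness), so a compact $K\sub(U_\infty,\cO)$ lies in some $U\cap F$ and is compact there, the topology~$\cO$ restricting to the usual topology on each~$F$. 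Proposition~\ref{babcoreg} then gives $\pi_k((U_\infty,\cO),p)=\dl_{F\in\cF_p}\pi_k(U\cap F,p)$ as well. Since for every~$F$ the inclusion $U\cap F\to U$ factors as $U\cap F\to(U_\infty,\cO)\xrightarrow{\,j\,}U$, the homomorphism $j_*$ corresponds to the identity of $\dl_{F\in\cF_p}\pi_k(U\cap F,p)$ and is thus an isomorphism for every~$k$; hence~$j$ is a weak homotopy equivalence.

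The step I expect to require the most care is the last one: one must verify that $\cO$ restricts to the canonical topology on each~$F$ (so that the two direct systems of homotopy groups literally coincide) and use the finite-dimensionality of compact sets in the finite topology to secure compact retractivity. By contrast, the direct limit formula for $\pi_k(U,\cdot)$ is, after the above chart construction, a direct application of Theorem~\ref{manthm}; the potential lack of local convexity of~$E$ is harmless, being absorbed into the balanced-set choice of core in case~(a) and into the convex relative neighbourhood in case~(b).
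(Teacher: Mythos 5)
Your proposal is correct, and its overall route is exactly the paper's: the same directed family of finite-dimensional subspaces $F\sub E_\infty$ with steps $U\cap F$, identity charts to produce well-filled charts, Theorem~\ref{manthm} for the direct limit formula, and Proposition~\ref{genpalais} (equivalently, compact retractivity of $(U_\infty,\cO)$ combined with Proposition~\ref{babcoreg}) for the weak homotopy equivalence. Two of your sub-arguments differ in ways worth recording. First, in case~(b) the paper passes, for each $q\in U$, to a \emph{convex relatively open} neighbourhood $W\sub U$ and applies Example~\ref{newDD} to the chart $\id_W$ with core $V^{(2)}=W$; you instead keep the single global chart $\id_U$ and take $V^{(2)}:=\Int_U(N)$ for a convex relative neighbourhood~$N$, checking (e) and (f) of Definition~\ref{defwlfl} by hand. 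Your variant is in fact slightly more robust: condition~(e) does not require the core itself to be convex, so you only need the convex neighbourhood that the hypothesis literally provides, whereas the paper's reduction tacitly upgrades it to a convex \emph{relatively open} one (the relative interior of a convex set in a non-open $U$ is not obviously convex, so this upgrade needs an argument the paper does not spell out). Second, for compact retractivity of $(U_\infty,\cO)=\bigcup_F(U\cap F)$ the paper cites \cite[Proposition~7.25\,(iv)]{HaM} and formulates the argument for a whole range of admissible topologies $\cT$ on $U_\infty$, while you give a direct, self-contained argument: a compact subset of $E_\infty$ in the finite topology spans a finite-dimensional subspace, since an infinite linearly independent subset would be closed and discrete; this is correct, granted the standard fact (which you rightly flag as the delicate point) that the finite topology induces the canonical topology on each~$F$. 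Your closing factorization argument for why $j_*$ is bijective is precisely the content of the proof of Proposition~\ref{genpalais}, so either invoking that proposition or repeating the argument is fine.
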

So far, generalizations to non-locally convex spaces
had been established only for isolated examples~\cite{PaP}.\\[2.5mm]
{\bf Results concerning homotopy classes of general maps.}
Theorem~\ref{manthm}
will be deduced
from an analogous
result (Theorem~\ref{newthm})
for homotopy classes
of continuous maps $|\Sigma|\to M$,
where $\Sigma$ a finite simplicial complex.
This\linebreak
theorem is our main result.
Its proof does not cause additional effort.\\[4mm]
{\bf Applications in Lie theory.}
Once all tools enabling calculations
of\linebreak
homotopy groups
are established (in Section~\ref{secpalais}),
we apply them to typical
examples
of infinite-dimensional
Lie groups.

\noindent
In Section~\ref{seclie},
we inspect the prime examples
of infinite-dimensional
Lie groups
which are directed
unions of Lie groups
or manifolds (as compiled in~\cite{COM}).
As we shall see, our methods
apply to all of them. Many of the examples
are compactly retractive (whence the elementary
Proposition~\ref{babcoreg} applies),
but not all of them (in which case Theorem~\ref{babythm}
cannot be avoided).
It deserves mention that the existence
of a direct limit chart is usually quite obvious,
while the proof
of compact retractivity may require specialized
functional-analytic tools.
Therefore Theorem~\ref{babythm}
is usually easier to apply\linebreak
than Proposition~\ref{babcoreg}
(although its proof is much harder).\\[2.5mm]
The main applications of our results
are given in Section~\ref{secdnsungp},
which is devoted to the
calculation of the homotopy groups
of prime examples of Lie groups
that contain a dense directed union of Lie groups
(notably various types of mapping groups
and diffeomorphism groups).
In particular, we prove a (formerly open) conjecture
by Boseck,
Czichowski and Rudolph~\cite{BCR}
from 1981,
concerning the homotopy groups of Lie groups
of rapidly decreasing\linebreak
Lie-group valued maps on~$\R^d$ (see Remark~\ref{solveBCR}).\\[2.5mm]
As an additional input, our applications
in Section~\ref{secdnsungp}
require that the test function group
$C^\infty_c(M,H)$
is dense in $C^r_c(M,H)$
for each finite-dimensional smooth manifold~$M$,
Lie group~$H$ and $r\in \N_0$.
And a similar density\linebreak
result
is also needed for certain weighted mapping groups.
These more specialized technical tools have been relegated to a separate
paper~\cite{SMO}. They are based on results concerning smooth approximations
of $C^r$-sections in\linebreak
fibre bundles,
which generalize the $C^0$-case discussed in~\cite{Woc}.\\[4mm]
{\bf Motivation.}
In the extension theory of infinite-dimensional
Lie groups,
the homotopy groups
$\pi_0(G)=G/G_0$,
$\pi_1(G)$ and $\pi_2(G)$
are needed to see whether a central extension
\[
\{0\}\to \ca\to \ca \oplus_\omega \cg \to \cg\to \{0\}
\]
of topological Lie algebras
(with $\cg=L(G)$)
gives rise to a central extension
\[
\one \to \ca/\Gamma\to \wh{G}\to G\to \one
\]
of Lie groups
for some discrete subgroup $\Gamma\sub \ca$
and some Lie group $\wh{G}$
such that $L(\wh{G})=\ca \oplus_\omega \cg$
(where $\ca$ is a complete
locally convex space and $\omega$
an $\ca$-valued $2$-cocycle on~$\cg$).
If $G$ is connected (i.e., if $\pi_0(G)=\one$),
then such a Lie group extension exists
if and only if
\begin{itemize}
\item
The ``period group'' $\Pi$ is discrete
in $\ca$ (which is
the image of a certain
``period homomorphism''
$\per_\omega\colon \pi_2(G)\to \ca$); and
\item
A certain ``flux homomorphism''
$F_\omega\colon \pi_1(G)\to H^1_c(\cg, \ca)$ vanishes~\cite{NeC}.
\end{itemize}
In this case, one can take $\Gamma=\Pi$.
Similar results are available for abelian~\cite{NeA}
and general extensions~\cite{NeN}.
In view of these applications,
it is very well motivated to study
the homotopy groups of infinite-dimensional Lie groups.\\[4mm]
{\bf Related literature.}
Much of the literature on the homotopy groups
of infinite-dimensional manifolds
has concentrated on the case of manifolds
modelled on Hilbert, Banach or Fr\'{e}chet
spaces, for which strongest results
are available. We recall two landmark results:
Every smoothly paracompact\linebreak
smooth manifold
modelled on a separable Hilbert space
is diffeomorphic to an open subset
of modelling spaces \cite{EaE},
and its diffeomorphism type is
\linebreak
determined
by the homotopy type \cite{BaK}.
Finite-dimensional submanifolds
play a vital role in~\cite{EaE}.
Frequently, Banach manifolds
are homotopy equivalent
to an ascending union
of finite-dimensional submanifolds
(see \cite{EaE} and \cite{Muk}).\\[2.5mm]
Various authors have studied the homotopy
groups of certain classical
Banach-Lie groups
of operators of Hilbert spaces
(see \cite{Pa1}
and
\cite{dlH}
for the case of\linebreak
separable Hilbert spaces,
\cite{NeH} for discussions subsuming
the non-separable case);
also some results on
groups of operators of Banach spaces
are\linebreak
available~\cite{Geb}.
Typically, one shows that
the group is homotopy equivalent to a direct limit
of classical groups like
$\GL_\infty(\R)=\dl\, \GL_n(\R)$,\vspace{-.5mm}
$\gU_\infty(\C)=\dl\, \gU_n(\C)$
or $\gO_\infty(\R)=\dl\, \gO_n(\R)$.\vspace{-.5mm}
The homotopy groups of these direct limit
groups can be calculated
using the Bott periodicity theorems~\cite{Bot}.
In~\cite{NeF},\linebreak
dense unions of finite-dimensional Lie groups
are used to describe the\linebreak
homotopy groups
of unit groups of approximately finite $C^*$-algebras.\\[2.5mm]
Some results
beyond Banach-Lie groups
are established in \cite{NeG},
notably\linebreak
approximation theorems
enabling the calculation of the homotopy groups
of various types of mapping groups,
like $C_0(M,H)$
with $M$ a $\sigma$-compact finite-dimensional
smooth manifold and~$H$ a Lie group
\cite[Theorem A.10]{NeG}.\\[2.5mm]
Typical applications of direct limits
of finite-dimensional Lie groups (and manifolds)
in algebraic topology are described
in \cite[\S47]{KaM}.
\section{Preliminaries and notation}\label{secprel}
In addition to the definitions already given
in the introduction, we now\linebreak
compile further notation,
conventions and basic facts.\\[2.5mm]
{\bf General conventions.}
As usual, $\R$ denotes the field
of real numbers, $\N:=\{1,2,\ldots\}$, $\N_0:=\N\cup\{0\}$
and $\Z:=\N_0\cup(-\N)$. A subset $U$ of a vector
space~$E$ is called \emph{balanced} if $tU\sub U$
for all $t\in \R$ such that $|t|\leq 1$.
If $(X,d)$ is a metric space,
$x\in X$ and $\ve>0$, we write
$B_\ve^d(x):=\{y\in X\colon d(x,y)<\ve\}$
and
$\wb{B}_\ve^d(x):=\{y\in X\colon d(x,y)\leq
\ve\}$, or simply $B_\ve(x)$ and $\wb{B}_\ve(x)$
if~$X$ and~$d$ are clear from the context.
If $(X,\|.\|)$ is a normed space and $d(x,y)=\|x-y\|$,
we also write $B^X_\ve(x):=B^d_\ve(x)$.
By a directed family, we mean a family
$(X_\alpha)_{\alpha\in A}$ of sets $X_\alpha$
indexed by a directed set $(A,\leq)$
such that $X_\alpha\sub X_\beta$
for all $\alpha,\beta\in A$ such that $\alpha\leq \beta$.
If~$G$ is a topological group,
we write $1$ for its neutral element
and abbreviate $\pi_k(G):=\pi_k(G,1)$
for $k\in \N_0$. If $G_{(1)}$
is the path component of~$1$,
then $\pi_0(G)=G/G_{(1)}$,
whence $\pi_0(G)$ is a group in a natural way.
The following convention is useful:
\begin{numba}\label{concon}
Let $M$ be a topological space, $p\in M$
and $k\in \N_0$.
If $k\geq 1$ or $M$ is a topological
group and $p=1$, then
$\pi_k(M,p)$ is considered as a group,
``morphism'' reads ``homomorphism,''
and we are working in the category of
groups and homomorphisms.
Otherwise, $\pi_0(M)\hspace*{-.2mm}:=\hspace*{-.2mm}\pi_0(M,p)$ is a set,
``morphism'' reads ``map,''
and we are working in the category
of sets and maps.
\end{numba}
Given a map $f\colon X\to Y$ and $A\sub X$,
we write $f|_A$ for the restriction of $f$ to~$A$.
If $B\sub Y$ is a subset which contains
the image $\im(f)$ of~$f$,
we write $f|^B\colon X\to B$
for the co-restriction of~$f$ to~$B$.
Given a topological space~$X$ and $p\in X$,
we let $X_{(p)}$ be the path component of~$p$ in~$X$.
If $f\colon X\to Y$ is a continuous map
and $p\in X$, then $f$ restricts and co-restricts
to a continuous map $f_{(p)}\colon X_{(p)}\to Y_{(f(p))}$.
If $(X,d)$ is a metric space and $A\sub X$
a subset, we let $\diam(A):=\sup\{d(x,y)\colon
x,y\in A\}\in [0,\infty]$ be its diameter.\\[2.5mm]
{\bf Simplicial complexes.}
In this article, we shall only need
finite simplicial complexes~$\Sigma$,
and we shall always consider
these as sets of simplices
$\Delta=\conv\{v_1,\ldots, v_r\}$
in a finite-dimensional vector space~$F$
(where $v_1,\ldots, v_r\in F$ are affinely
independent and $\rk(\Delta):=r$),
not as abstract simplicial\linebreak
complexes.
We write $|\Sigma|:=\bigcup_{\Delta\in \Sigma}\Delta$
and call $\sup\{\rk(\Delta)\colon \Delta\in \Sigma\}\in \N$
the \emph{rank of $\Sigma$.}
Given a simplex $\Delta=\conv\{v_1,\ldots,v_r\}$
as above, we let
$\cV(\Delta):=\{v_1,\ldots, v_r\}$
be its set of vertices.
We define $\cV(\Sigma):=\bigcup_{\Delta\in \Sigma}\cV(\Delta)$.
A simplicial complex $\Sigma'$ is called
a \emph{refinement} of~$\Sigma$
if each $|\Sigma'|=|\Sigma|$
and each $\Delta\in \Sigma$ is a union
of simplices in $\Sigma'$.
A typical example of a refinement
is the barycentric subdivision $\bsd(\Sigma)$
of $\Sigma$ (see \cite[119-120]{Hat}), which we may iterate:
$\bsd^j(\Sigma):=\bsd(\bsd^{j-1}(\Sigma))$ for $j\in \N$.
We recall:
If a euclidean norm $\|.\|$ on~$F$ is given,
$D:=\sup\{\diam(\Delta)\colon \Delta\in \Sigma\}$
and $r:=\rk(\Sigma)$,
then
\begin{equation}\label{estbsd}
\diam(\Delta)\; \leq \; \frac{r-1}{r}\, D
\quad\mbox{for
each $\, \Delta\in \bsd(\Sigma$)}
\end{equation}
(cf.\ \cite[p.\,120]{Hat}).
Triangulating $|\Sigma|$ by affine simplices
ensures that
\begin{equation}\label{goodtri}
\bigcup \, \{\Delta\in\Sigma\colon
\Delta\sub X\}\;=\; X
\end{equation}
if $|\Sigma|$
is a convex set and $X$ a face of~$|\Sigma|$
(or a union of faces).\\[2.5mm]
{\bf Basic facts concerning the compact-open topology.}
If $X$ and $Y$ are topological spaces, we write
$C(X,Y)_{c.o.}$ for the set of
continuous functions from $X$ to~$Y$,
equipped with the compact-open topology.
The sets
\[
\lfloor K, W \rfloor\, :=\, \{\gamma\in C(X,Y)\colon \gamma(K)\sub W\}
\]
form a subbasis for this topology,
for $K$ ranging through the compact
subsets of~$X$ and $W$ through the open subsets
of~$Y$. The following well-known facts
(proved, e.g.,
in \cite{Eng} and \cite{GaN})
will be used repeatedly:
\begin{la}\label{sammeltaxi}
Let $X$, $Y$ and $Z$ be topological
spaces and $f\colon Y\to Z$ be a continuous map.
The the following holds:
\begin{itemize}
\item[\rm(a)]
The map $C(X,f)\colon C(X,Y)_{c.o.}\to C(X,Z)_{c.o.}$,
$\gamma\mto f\circ \gamma$ is continuous.
\item[\rm(b)]
The map $C(f,X)\colon C(Z,X)_{c.o.}\to C(Y,X)_{c.o.}$,
$\gamma\mto \gamma\circ f$ is continuous.\linebreak
In particular,
$C(\iota, X)\colon C(Z,X)\to C(Y,X)$, $\gamma\mto\gamma|_Y$
is continuous if $Y\sub Z$ is equipped with a topology
making the inclusion map $\iota\colon Y\to Z$, $y\mto y$
continuous.
\item[\rm(c)]
If $\gamma\colon X\times Y\to Z$ is continuous,
then $\gamma^\vee\colon X\to C(Y,Z)_{c.o.}$,
$\gamma^\vee(x):=\gamma(x,\sbull)$ is continuous.
\item[\rm(d)]
If $\gamma\colon X\to C(Y,Z)_{c.o.}$ is continuous
and~$Y$ is locally compact, then the map
$\gamma^\wedge\colon X\times Y\to Z$, $\gamma^\wedge(x,y):=\gamma(x)(y)$
is continuous.
\item[\rm(e)]
If $X$ is locally compact, then the evaluation map
\mbox{$\ve\colon \! C(X,Y)_{c.o.}\!\!\times \!X\to Y\!$,}
$\ve(\gamma,x):=\gamma(x)$
is continuous.\Punkt
\end{itemize}
\end{la}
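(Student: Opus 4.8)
The plan is to verify each of the five assertions directly from the definition of the compact-open topology through its subbasis of sets $\lfloor K,W\rfloor$, using the principle that a map \emph{into} a space carrying the compact-open topology is continuous as soon as the preimages of these subbasic sets are open.

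For (a), I would compute the preimage of a subbasic set: given a compact $K\sub X$ and an open $W\sub Z$, one has
\[
C(X,f)^{-1}(\lfloor K,W\rfloor)\;=\;\{\gamma\in C(X,Y)\colon f(\gamma(K))\sub W\}\;=\;\lfloor K,f^{-1}(W)\rfloor,
\]
which is open in $C(X,Y)_{c.o.}$ because $f^{-1}(W)$ is open by continuity of~$f$. For (b), the analogous computation gives
\[
C(f,X)^{-1}(\lfloor K,W\rfloor)\;=\;\lfloor f(K),W\rfloor
\]
for compact $K\sub Y$ and open $W\sub X$; here $f(K)$ is compact, being a continuous image of a compact set, so the right-hand side is again subbasic open, now in $C(Z,X)_{c.o.}$. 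The ``in particular'' statement is the special case $f=\iota$.

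The two substantive steps are (c) and (d). For (c), to see that $\gamma^\vee$ is continuous I would show that $(\gamma^\vee)^{-1}(\lfloor K,W\rfloor)$ is open: if $x_0$ lies in this set, then $\{x_0\}\times K\sub\gamma^{-1}(W)$, and since $K$ is compact and $\gamma^{-1}(W)$ is open, the \emph{tube lemma} yields an open neighbourhood $U$ of~$x_0$ with $U\times K\sub\gamma^{-1}(W)$, whence $U\sub(\gamma^\vee)^{-1}(\lfloor K,W\rfloor)$. For (d), to check continuity of $\gamma^\wedge$ at a point $(x_0,y_0)$ with $\gamma^\wedge(x_0,y_0)\in W$ for some open $W$, I would first use continuity of the single map $\gamma(x_0)$ together with local compactness of~$Y$ to obtain an open neighbourhood $V$ of~$y_0$ whose closure $\wb{V}$ is compact and satisfies $\gamma(x_0)(\wb{V})\sub W$, so that $\gamma(x_0)\in\lfloor \wb{V},W\rfloor$; continuity of $\gamma$ then provides an open neighbourhood $U$ of~$x_0$ with $\gamma(U)\sub\lfloor \wb{V},W\rfloor$, and therefore $\gamma^\wedge(U\times V)\sub W$.

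Finally, (e) follows by applying (d) to the identity map $\id\colon C(X,Y)_{c.o.}\to C(X,Y)_{c.o.}$, since $\id^\wedge$ is precisely the evaluation map $\ve$. The only genuine obstacles are the tube-lemma argument in (c), where compactness of the test sets~$K$ is essential, and the passage to a compact neighbourhood in (d), which is exactly where local compactness of~$Y$ (used in conjunction with the standing Hausdorff assumption) enters; everything else is routine bookkeeping with the subbasis.
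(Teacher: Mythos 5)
Your proof is correct in all five parts: the subbasis computations $C(X,f)^{-1}(\lfloor K,W\rfloor)=\lfloor K,f^{-1}(W)\rfloor$ and $C(f,X)^{-1}(\lfloor K,W\rfloor)=\lfloor f(K),W\rfloor$ are exactly right, the tube-lemma argument for (c) is the standard and correct one, your use of a compact neighbourhood $\wb{V}\subseteq(\gamma(x_0))^{-1}(W)$ in (d) is legitimate precisely because the paper's standing Hausdorff assumption combines with local compactness of $Y$ (a point you correctly flag), and deducing (e) from (d) via $\id^\wedge=\ve$ is clean. One point of comparison worth noting: the paper does not prove this lemma at all --- it is stated as a collection of well-known facts with the proof outsourced to the references (Engelking's book and the Gl\"{o}ckner--Neeb manuscript), which is why the statement ends with a box and no proof environment follows. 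So your contribution is a self-contained verification of what the paper takes for granted; the argument you give is the standard textbook one that those references contain, and there is nothing to criticize in it. The only stylistic remark is that in (c) you implicitly apply the tube lemma to the open set $\gamma^{-1}(W)\cap(X\times K)$ of $X\times K$ rather than to $\gamma^{-1}(W)$ itself; this is harmless and routine, but spelling it out would make the step airtight.
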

{\bf Direct limits.}
We assume that the reader is familiar
with the concepts of a direct
system
$\cS=((X_\alpha)_{\alpha\in A}, (\phi_{\beta,\alpha})_{\beta\geq \alpha})$
over a directed set $(A,\leq)$
in a category~$\cC$,
the notion of a cone
$(X,(\phi_\alpha)_{\alpha\in A})$
over~$\cS$
and that of a direct limit cone
and its universal property.
It is well known
that every
direct system
$((X_\alpha)_{\alpha\in A}, (\phi_{\beta,\alpha})_{\beta\geq \alpha})$
in the category of
sets has a direct limit
$(X,(\phi_\alpha)_{\alpha\in A})$.
It is also known that
\begin{equation}\label{bscdl1}
X\; =\; \bigcup_{\alpha \in A}\phi_\alpha(X_\alpha)
\end{equation}
and if $\alpha,\beta \in A$ and $x\in X_\alpha$, $y\in X_\beta$, then
\begin{equation}\label{bscdl2}
\phi_\alpha(x)=\phi_\beta(y)\quad
\Leftrightarrow \quad \mbox{$(\exists \gamma \geq \alpha,\beta )$ $\;\;\phi_{\gamma,\alpha}(x)
=\phi_{\gamma,\beta}(y)$.}
\end{equation}
Likewise, each direct system in the category of groups
and homomorphisms has a direct limit. Its underlying set
is the direct limit of the given direct
system in the category of sets. See, e.g.,
\cite[\S2]{DIR} for these well-known facts.
\section{Elementary observations}\label{sectrivia}
In this section,
we make some simple observations
concerning the path\linebreak
components, homotopy groups
and homology modules of directed unions
of topological spaces,
assuming that these are
compactly retractive.
Special cases of these results
are known or part of the folklore,
but they are so useful (and apply to so
many examples in Lie theory) that
they deserve to be recorded
in full generality, despite their simplicity.
First applications
of weak direct limit charts will also be given.
\begin{numba}
Throughout
this section,
we assume that $M$ is a topological space
and $M=\bigcup_{\alpha\in A}M_\alpha$
for a directed family $(M_\alpha)_{\alpha\in A}$
of topological spaces~$M_\alpha$,
such that the inclusion maps
$\lambda_\alpha \colon M_\alpha\to M$ (for $\alpha\in A$)
and $\lambda_{\beta,\alpha}\colon M_\alpha\to M_\beta$ (for $\alpha\leq \beta$)
are continuous.
\end{numba}
\begin{defn}
We say that $M=\bigcup_{\alpha\in A}M_\alpha$
is \emph{compactly retractive}
if every\linebreak
compact subset
$K\sub M$ is contained in
$M_\alpha$ for some $\alpha\in A$
and $M_\alpha$ induces the same
topology on~$K$ as~$M$.
\end{defn}
Compact retractivity has useful consequences:
\begin{prop}\label{babcoreg}
Let $M=\bigcup_{\alpha\in A}M_\alpha$
be compactly retractive, $p\in M$
and $A_p:=\{\alpha\in A\colon p\in M_\alpha\}$.
Then the following holds:
\begin{itemize}
\item[\rm(a)]
The path component~$M_{(p)}$ of~$p$ in~$M$
is the union $M_{(p)}=\bigcup_{\alpha\in A_p} (M_\alpha)_{(p)}$;
\item[\rm(b)]
$\pi_k(M,p)=\dl_{\alpha\in A_p} \pi_k(M_\alpha,p)$\vspace{-.8mm}
as a group, for each $k\in \N$;
\item[\rm(c)]
$\pi_0(M,p)=\dl_{\alpha\in A_p} \pi_0(M_\alpha,p)$\vspace{-1mm}
as a set;
\item[\rm(d)]
If $M$ and each $M_\alpha$ is a topological
group and all $\lambda_\alpha$ and $\lambda_{\beta,\alpha}$
are continuous
homomorphisms, then
$\pi_0(M)=\dl\, \pi_0(M_\alpha)$\vspace{-.8mm}
as a group;
\item[\rm(e)]
The singular homology modules of~$M$ over~$R$ are of the form
$H_k(M,R)=\dl\, H_k(M_\alpha,R)$,\vspace{-.7mm}
for each $k \in \N_0$ and each commutative ring~$R$.
\end{itemize}
\end{prop}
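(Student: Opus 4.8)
The plan is to reduce the entire proposition to a single \textbf{factoring principle}: any continuous map $f\colon C\to M$ from a compact space~$C$ corestricts to a continuous map into some~$M_\gamma$, and the index~$\gamma$ may be taken above any prescribed $\alpha_0\in A$. Indeed, $f(C)$ is compact, so compact retractivity provides $\delta\in A$ with $f(C)\sub M_\delta$ on which $M_\delta$ and~$M$ induce the same topology. Given $\alpha_0$, choose $\gamma\geq \delta,\alpha_0$ by directedness. Since the inclusions $M_\delta\to M_\gamma\to M$ are continuous, the three subspace topologies on $f(C)$ get successively coarser along $M_\delta,M_\gamma,M$; as the outer two agree, all three coincide, and $f$ corestricts to a continuous $f_\gamma\colon C\to M_\gamma$ with $\lambda_\gamma\circ f_\gamma=f$. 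This ``sandwich'' is the only delicate point: compact retractivity only asserts agreement of topologies for \emph{some} capturing index, and one must check the agreement survives the enlargement of~$\delta$ to a common upper bound.

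Granting this, I would obtain~(a)--(c) by verifying the two characterising properties~(\ref{bscdl1}) and~(\ref{bscdl2}) of a direct limit of sets (resp.\ groups) for the canonical cone
\[
(\lambda_\alpha)_*\colon \pi_k(M_\alpha,p)\to \pi_k(M,p)\qquad(\alpha\in A_p).
\]
For~(\ref{bscdl1}): a class in $\pi_k(M,p)$ is represented by a based map~$\gamma$ on the $k$-sphere (for $k=0$ this is part~(a), read for paths); its image is compact and contains~$p$, so the factoring principle yields $\gamma_\alpha\colon S^k\to M_\alpha$ with $p\in M_\alpha$, hence $\alpha\in A_p$, and $\lambda_\alpha\circ\gamma_\alpha=\gamma$. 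For~(\ref{bscdl2}): if $(\lambda_\alpha)_*[\gamma_\alpha]=(\lambda_\beta)_*[\gamma_\beta]$, a based homotopy $H\colon S^k\times[0,1]\to M$ between $\lambda_\alpha\circ\gamma_\alpha$ and $\lambda_\beta\circ\gamma_\beta$ has compact domain and basepoint-value~$p$; factoring~$H$ through some $M_\gamma$ with $\gamma\geq\alpha,\beta$ and using injectivity of the inclusions (so that $\lambda_\alpha=\lambda_\gamma\circ\lambda_{\gamma,\alpha}$) to identify its two ends shows $(\lambda_{\gamma,\alpha})_*[\gamma_\alpha]=(\lambda_{\gamma,\beta})_*[\gamma_\beta]$ in $\pi_k(M_\gamma,p)$. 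Part~(a) is exactly this for paths, the reverse inclusion $\bigcup_{\alpha\in A_p}(M_\alpha)_{(p)}\sub M_{(p)}$ being trivial via $\lambda_\alpha$. Since each $(\lambda_\alpha)_*$ is a homomorphism for $k\geq1$, the resulting bijection is an isomorphism of groups, giving~(b); for $k=0$ it is a bijection of pointed sets, giving~(c).

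Part~(d) needs no new homotopy input: it merely upgrades~(c) (with $p=1$) from sets to groups. As the $\lambda_\alpha$ and $\lambda_{\beta,\alpha}$ are continuous homomorphisms, the induced maps on $\pi_0$ are group homomorphisms, and, as recalled at the end of Section~\ref{secprel}, the underlying set of a direct limit of groups is the direct limit of underlying sets; hence the bijection of~(c) is automatically a group isomorphism. For~(e) I would pass to singular chain complexes. Each singular simplex has compact domain, and a chain, cycle, or boundary involves only finitely many simplices, so the factoring principle together with directedness lifts it to $C_\bullet(M_\gamma,R)$ for one common index~$\gamma$; conversely chains on each $M_\alpha$ push forward via $\lambda_\alpha$. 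This exhibits $C_\bullet(M,R)=\dl_{\alpha\in A} C_\bullet(M_\alpha,R)$ as a direct limit of chain complexes over~$A$, and since homology commutes with direct limits over a directed set, $H_k(M,R)=\dl_{\alpha\in A} H_k(M_\alpha,R)$.

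The only genuine obstacle, as flagged above, is the topology-agreement embedded in the factoring principle; once the sandwich argument settles it, every remaining step is a routine invocation of the compactness of spheres, cubes, simplices, and the homotopy parameter~$[0,1]$, combined with the explicit description~(\ref{bscdl1})--(\ref{bscdl2}) of direct limits.
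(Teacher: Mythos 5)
Your proposal is correct and takes essentially the same approach as the paper: both arguments use compact retractivity to factor the representing maps and the connecting homotopies (all defined on compact domains) continuously through some step $M_\alpha$, and then verify the direct-limit identities (\ref{bscdl1})--(\ref{bscdl2}) for the cone of maps $(\lambda_\alpha)_*$, treating homology by factoring the finitely many singular simplices of a chain. The one point you elaborate beyond the paper --- the ``sandwich'' argument showing that the agreement of topologies on a compact set survives enlarging the capturing index to a common upper bound --- is exactly what the paper uses tacitly when it asserts that the homotopy factors through some $M_\beta$ with $\beta\geq\alpha$, so this is a welcome clarification rather than a different route.
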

\begin{proof}
(a) By compact retractivity,
every path in~$M$ is a
path in some~$M_\alpha$,
from which the assertion follows.\vspace{1.3mm}

(b), (c) and (d): We shall use the conventions of~{\bf\ref{concon}}.
In the situation of~(d), we let $p:=1$;
in the situation of~(c) and~(d), we let $k:=0$.
We first fix notation
which can be re-used later.
\begin{numba}\label{reudlsit}
After passing to a cofinal subsystem,
we may assume that $p\in M_\alpha$
for each $\alpha\in A$.
Since $\lambda_\beta \circ \lambda_{\beta,\alpha}=\lambda_\alpha$
if $\alpha\leq\beta$,
we have $(\lambda_\beta)_*\circ (\lambda_{\beta,\alpha})_*
=(\lambda_\alpha)_*$,
where $(\lambda_\alpha)_*\colon \pi_k(M_\alpha,p)\to \pi_k(M,p)$
and $(\lambda_{\beta,\alpha})_*\colon \pi_k(M_\alpha,p)\to\pi_k(M_\beta,p)$.
Hence $(\pi_k(M,p),((\lambda_\alpha)_*)_{\alpha\in A})$
is a cone over the direct system
\[
((\pi_k(M_\alpha,p))_{\alpha\in A},
((\lambda_{\beta,\alpha})_*)_{\alpha\leq \beta})\, .
\]
By the universal property of the direct limit,
there exists a unique morphism
$\psi \colon D:=\dl\,\pi_k(M_\alpha,p)\to \pi_k(M,p)$\vspace{-.8mm}
such that $\psi\circ \mu_\alpha=(\lambda_\alpha)_*$
for each $\alpha\in A$, where
$\mu_\alpha\colon \pi_k(M_\alpha,p)\to D$ is the limit map.
\end{numba}
\emph{$\psi$ is surjective.}
To see this, let
$[\gamma]\in \pi_k(M,p)$,
where $\gamma\colon [0,1]^k\to M$
is a continuous map with
$\gamma|_{\partial [0,1]^k}=p$.
By compact retractivity,
$\gamma$ co-restricts to a continuous map
$\eta \colon [0,1]^k\to M_\alpha$
for some $\alpha\in A$.
Then $[\eta]\in \pi_k(M_\alpha,p)$
and $(\lambda_\alpha)_*([\eta])=[\gamma]$.\\[2.5mm]
\emph{$\psi$ is injective.}
To see this, let
$g_1, g_2 \in D$
such that $\psi(g_1)=\psi(g_2)$.
There exist $\alpha\in A$ and $[\gamma_1],[\gamma_2]\in
\pi_k(M_\alpha,p)$ such that
$g_j=\mu_\alpha([\gamma_j])$
for $j\in \{1,2\}$.
Then $\gamma_1,\gamma_2$ are homotopic relative
$\partial [0,1]^k$ in $M$, by means
of the homotopy $F\colon [0,1]^k\times [0,1]\to M$, say.
By compact retractivity, there is $\beta\geq \alpha$ such that
$F$ co-restricts to a continuous map to $M_\beta$.
Then $(\lambda_{\beta,\alpha})_*([\gamma_1])
= (\lambda_{\beta,\alpha})_*([\gamma_2])$
and hence $g_1=\mu_\alpha([\gamma_1])=
\mu_\beta((\lambda_{\beta,\alpha})_* ([\gamma_1]))
= \mu_\beta((\lambda_{\beta,\alpha})_*([\gamma_2]))=g_2$.\vspace{1.3mm}

(e) Let $c=\sum_{\sigma} r_\sigma\, \sigma$
be a singular chain in $M$, where $r_\sigma\in R$
and $F:=\{\sigma\colon r_\sigma\not=0\}$
is finite. Then there exists $\alpha\in A$
such that each $\sigma\in F$ co-restricts
to a continuous map to $M_\alpha$.
Thus $c$ can be considered as a singular chain
in $M_\alpha$.
The assertion now follows
as in the proof of (b).
\end{proof}
In the presence of weak direct limit charts,
compact retractivity can be checked
on the level of modelling spaces.
\begin{prop}\label{inftesiml}
Let $M$ be a topological manifold
which is a directed union
$M=\bigcup_{\alpha\in A}M_\alpha$
of topological manifolds.
\begin{itemize}
\item[\rm(a)]
If $M$ is covered by the domains
of weak direct limit charts\linebreak
$\phi \colon M\supseteq U\to V\sub E$
$($as in Definition~{\rm\ref{defweakdl})}
such that
$E=\bigcup_{\alpha\geq\alpha_0}E_\alpha$
is compactly retractive,
then $M$ is compactly retractive.
\item[\rm(b)]
If $M$ is compactly retractive
and $\phi=\bigcup_{\alpha\geq\alpha_0}\phi_\alpha\colon
M\supseteq U\to V\sub E=\bigcup_{\alpha\geq\alpha_0}E_\alpha$
a weak direct limit chart
with charts $\phi_\alpha\colon M_\alpha\supseteq
U_\alpha\to V_\alpha\subseteq E_\alpha$,
then $E=\bigcup_{\alpha\geq \alpha_0}E_\alpha$
is compactly retractive.
\end{itemize}
\end{prop}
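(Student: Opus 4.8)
The plan is to prove part (b) first (the single‑chart statement) and then bootstrap to part (a) by a finite‑cover argument. Throughout I use the normalization $E_\alpha\subseteq E_\beta\subseteq E$ with inclusion maps, together with the identities $\phi|_{U_\alpha}=\phi_\alpha$ and $V=\phi(U)=\bigcup_{\alpha}V_\alpha$ coming from Definition~\ref{defweakdl} (the last because $\phi(U_\alpha)=V_\alpha$ and $U=\bigcup_\alpha U_\alpha$).

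For (b), I would take a compact $K\subseteq E$ and first reduce to a compact subset of the chart image $V$. Since $V\neq\emptyset$, pick $v_0\in V$; then $v_0\in V_{\alpha_1}\subseteq E_{\alpha_1}$ for some $\alpha_1$. Every compact subset of a topological vector space is bounded, so $sK$ is absorbed into the $0$-neighbourhood $V-v_0$ for small $s>0$, giving a compact set $K':=v_0+sK\subseteq V$. The affine homeomorphism $\theta\colon x\mapsto v_0+sx$ of $E$ carries $K$ onto $K'$, and for every $\delta\geq\alpha_1$ it restricts to a self‑homeomorphism of $E_\delta$ (as $v_0\in E_{\alpha_1}\subseteq E_\delta$ and $E_\delta$ is a subspace). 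Hence it suffices to place $K'$ in some $E_\delta$ with matching trace topology, since $\theta^{-1}$ transports the conclusion back to $K$. Now $L:=\phi^{-1}(K')$ is compact in $U$, hence in $M$, so compact retractivity of $M$ yields $\gamma\geq\alpha_1$ with $L\subseteq M_\gamma$ and $\tau_{M_\gamma}|_L=\tau_M|_L$.

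The hard part, which I regard as the crux, comes next: I want $L$ to lie in a \emph{single} piece $U_\delta$, but the $U_\beta$ are only open in $M_\beta$, not in $M$, so a naive finite‑subcover argument fails. The remedy is that once $L\subseteq M_\gamma$, the sets $U_\beta\cap M_\gamma$ (for $\beta\geq\gamma$) \emph{are} open in $M_\gamma$ by continuity of $M_\gamma\to M_\beta$, form an increasing family, and cover the compact set $L$; hence $L\subseteq U_\delta$ for one $\delta\geq\gamma$. Then $K'=\phi(L)=\phi_\delta(L)\subseteq V_\delta\subseteq E_\delta$. For the topology, a squeeze along $\gamma\leq\delta$ gives $\tau_{M_\delta}|_L=\tau_M|_L$ (the three traces are ordered by fineness and pinched between the equal outer ones), and transporting through the homeomorphisms $\phi_\delta$ and $\phi$ (whose images $V_\delta$, $V$ are open in $E_\delta$, $E$) yields $\tau_{E_\delta}|_{K'}=\tau_E|_{K'}$. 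This completes (b).

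For (a), given a compact $K\subseteq M$, I would cover it by finitely many weak‑direct‑limit chart domains $U^{(1)},\dots,U^{(m)}$ with compactly retractive $E^{(i)}=\bigcup_\alpha E^{(i)}_\alpha$. Using normality of the compact Hausdorff space $K$ (shrinking lemma), write $K=\bigcup_i K_i$ with each $K_i\subseteq U^{(i)}$ compact and closed in $K$. For fixed $i$, $\phi^{(i)}(K_i)$ is compact in $E^{(i)}$, so by hypothesis it lies in some $E^{(i)}_\alpha$ with matching trace; the same crux trick, now applied to the increasing family $V^{(i)}_\beta\cap E^{(i)}_\alpha$ ($\beta\geq\alpha$) of sets relatively open in $E^{(i)}_\alpha$ that cover $\phi^{(i)}(K_i)$, places this image in a single $V^{(i)}_{\delta_i}$, whence $K_i\subseteq U^{(i)}_{\delta_i}\subseteq M_{\delta_i}$, and a squeeze gives $\tau_{M_{\delta_i}}|_{K_i}=\tau_M|_{K_i}$. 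Finally I choose $\alpha_*\geq\delta_1,\dots,\delta_m$, so $K\subseteq M_{\alpha_*}$; a squeeze gives $\tau_{M_{\alpha_*}}|_{K_i}=\tau_M|_{K_i}$ for each $i$, and since the $K_i$ form a finite closed cover of $K$, a finer topology agreeing with a coarser one on each $K_i$ must agree with it on $K$ (a set closed in the finer trace meets each $K_i$ in a set closed in the common trace, and these finitely many closed pieces reassemble $K$). Hence $\tau_{M_{\alpha_*}}|_K=\tau_M|_K$, so $M$ is compactly retractive.
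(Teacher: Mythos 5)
Your proposal is correct, and its engine is the same as the paper's: in both parts, the decisive step is exactly what you call the ``crux trick'' --- the sets $U_\beta$ (resp.\ $V_\beta$) are only open in $M_\beta$ (resp.\ $E_\beta$), but once the compact set is known to lie in a single step with matching trace topology, the traces of the $U_\beta$ (resp.\ $V_\beta$) on that step form a directed, relatively open cover, and compactness plus directedness places the set inside one $U_\delta$ (resp.\ $V_\delta$). The paper phrases this as ``$(V_\alpha\cap\phi(K_x))_{\alpha\geq\beta_x}$ is a directed family of sets and an open cover of the compact set $\phi(K_x)$.'' Where you genuinely diverge is the reduction into the chart in part (b): the paper normalizes $0\in V$, covers $K$ by finitely many translated compact neighbourhoods $K_x$ of $0$ in $(K-x)\cap V$ (one per point of a finite set $F\subseteq K$), runs the directed-cover argument on each piece, and at the end must choose $\alpha$ large enough that the translation vectors satisfy $F\subseteq E_\alpha$; you instead use boundedness of compact subsets of a topological vector space to contract all of $K$ into $V$ by a single affine homeomorphism $x\mapsto v_0+sx$ with $v_0\in V_{\alpha_1}$, so no finite decomposition is needed in (b) at all, and the condition $F\subseteq E_\alpha$ is replaced by the single requirement $\delta\geq\alpha_1$ (which your construction automatically delivers). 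This is a clean simplification. In part (a) the differences are cosmetic: the paper takes compact neighbourhoods $K_x\subseteq K\cap U$ of points of $K$ where you invoke the shrinking lemma, and the paper finishes by observing that $K=\bigcup_{x\in F}K_x$ is then compact in $M_\alpha$, the agreement of trace topologies being automatic because a continuous bijection from a compact space to a Hausdorff space is a homeomorphism; your explicit gluing of the two topologies over the finite closed cover is correct, but that compact-to-Hausdorff shortcut would have spared you the final step.
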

\begin{proof}
(a) Let $K\sub M$ be compact.
Given $x\in K$, let $\phi\colon U\to V$
be a weak direct limit chart as described in (a),
with $x\in U$. There exists a compact neighbourhood
$K_x\sub K\cap U$ of~$x$ in~$K$.
Now the compact retractivity of~$E$
shows that
$\phi(K_x)$ is a compact subset of $E_{\beta_x}$
for some $\beta_x\geq\alpha_0$.
Since $(V_\alpha\cap \phi(K_x))_{\alpha\geq\beta_x}$
is a directed family of sets
and an open cover of the compact set $\phi(K_x)$,
after increasing $\beta_x$ if necessary we may
assume that $\phi(K_x)$ is a compact subset
of $V_{\beta_x}$.
Then $K_x$ is a compact subset
of $U_{\beta_x}$.
There exists a finite subset $F\sub K$ such that
$K=\bigcup_{x\in F}K_x$,
and $\alpha\geq \alpha_0$ such that $\alpha\geq \beta_x$
for all $x\in F$.
Then $K=\bigcup_{x\in F}K_x$
is a compact subset of $M_\alpha$.\vspace{1.3mm}

(b) We may assume that $0\in V$. Let $K\sub E$ be compact.
Given $x\in K$, there exists a compact
neighbourhood $K_x$ of~$0$ in $(K-x)\cap V$.
By compact retractivity of~$M$,
there exists $\beta_x\geq\alpha_0$
such that $\phi^{-1}(K_x)$ is a compact subset
of~$M_{\beta_x}$.
Since $(U_\alpha\cap \phi^{-1}(K_x))_{\alpha\geq\beta_x}$
is a directed family of sets
and an open cover of the compact set $\phi^{-1}(K_x)$,
after increasing $\beta_x$ if necessary we may
assume that $\phi^{-1}(K_x)$ is a compact subset
of $U_{\beta_x}$.
Then $K_x$ is a compact subset
of $V_{\beta_x}$ and hence of~$E_{\beta_x}$.
There exists a finite subset $F\sub K$ such that
$K=\bigcup_{x\in F}(x+K_x)$,
and $\alpha\geq \alpha_0$ such that $F\sub E_\alpha$
and $\alpha\geq \beta_x$
for all $x\in F$.
Then $K=\bigcup_{x\in F}(x+K_x)$
is a compact subset of $E_\alpha$.
\end{proof}
The following corollary
refers to Lie groups modelled
on locally convex spaces, smooth maps and $C^1$-maps
as in \cite{RES}, \cite{GaN}
and~\cite{NeS}
(cf.\ also \cite{Mil}
for the case of sequentially complete modelling
spaces). The tangent space of a Lie group at the
identity element will be denoted by
$L(G):=T_1(G)$.
If $E$ and each $E_\alpha$
is a locally convex space in the definition of a weak direct limit chart
and $\phi$ and each $\phi_\alpha$
is a $C^1$-diffeomorphism,
then we speak of a weak direct limit chart
\emph{of class $C^1$}. Since all translates of a weak direct
limit chart of a Lie group are weak
direct limit charts,
Proposition~\ref{inftesiml} (and Remark~\ref{rem14}) imply:
\begin{cor}\label{noncpreg}
Assume that a Lie group $G$ is a directed
union $G=\bigcup_{\alpha\in A} G_\alpha$
of Lie groups~$G_\alpha$, such that
all inclusion maps are smooth
homomorphisms.
If $G$ admits a weak direct limit chart of class $C^1$
around~$1$, then $G$
is compactly retractive if and only if $L(G)=\bigcup_{\alpha\in A}L(G_\alpha)$
is compactly retractive.\Punkt
\end{cor}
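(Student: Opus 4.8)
The plan is to apply Proposition~\ref{inftesiml} with $M=G$ and $M_\alpha=G_\alpha$, using Remark~\ref{rem14} to identify the modelling spaces of the given $C^1$ weak direct limit chart $\phi\colon U\to V$ around~$1$ as $E=T_1(G)=L(G)$ and $E_\alpha=T_1(G_\alpha)=L(G_\alpha)$. Under this identification $E=\bigcup_\alpha E_\alpha$ is exactly $L(G)=\bigcup_\alpha L(G_\alpha)$, so the two implications of the corollary become the two parts of Proposition~\ref{inftesiml}.

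The ``only if'' direction is immediate: if $G$ is compactly retractive, then the single chart $\phi$ around~$1$ meets the hypothesis of Proposition~\ref{inftesiml}(b), whose conclusion is that $E=\bigcup_\alpha E_\alpha=L(G)$ is compactly retractive.

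For the ``if'' direction, by Proposition~\ref{inftesiml}(a) it suffices to cover $G$ by domains of weak direct limit charts whose modelling space is compactly retractive. The hypothesis gives one such chart, namely $\phi$ around~$1$, with modelling space $L(G)$. To produce charts around an arbitrary $g\in G$, I would left-translate: writing $\ell_g\colon G\to G$, $x\mapsto gx$ for left translation (a diffeomorphism), set $\psi:=\phi\circ\ell_{g^{-1}}\colon gU\to V$, a chart around~$g$. Since $G=\bigcup_\alpha G_\alpha$, pick $\alpha_1\geq\alpha_0$ with $g\in G_{\alpha_1}$; then $g\in G_\beta$ for every $\beta\geq\alpha_1$, so $\ell_g$ restricts to a diffeomorphism of the subgroup $G_\beta$ onto itself, and $\psi_\beta:=\phi_\beta\circ\ell_{g^{-1}}\colon gU_\beta\to V_\beta$ is a chart of~$G_\beta$.

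The step requiring care is the verification that $\psi$, together with the $\psi_\beta$ $(\beta\geq\alpha_1)$ and the unchanged spaces $E_\beta\sub E$, is again a weak direct limit chart. Conditions~(a) and~(b) of Definition~\ref{defweakdl} transfer at once from the corresponding identities for $\phi$ and the~$\phi_\beta$, since left translation is applied uniformly on both sides. The genuine obstacle is the covering condition~(c): the translated structure is available only for $\beta\geq\alpha_1$ rather than $\beta\geq\alpha_0$, so I must check $gU=\bigcup_{\beta\geq\alpha_1}gU_\beta$. This holds because the $U_\beta$ increase with $\beta$ and $A$ is directed, whence $\bigcup_{\beta\geq\alpha_1}U_\beta=\bigcup_{\beta\geq\alpha_0}U_\beta=U$; the same directedness gives $\bigcup_{\beta\geq\alpha_1}E_\beta=E=L(G)$, so the translated chart has the compactly retractive modelling space $L(G)$. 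These charts cover~$G$, and Proposition~\ref{inftesiml}(a) then yields compact retractivity of~$G$.
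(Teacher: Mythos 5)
Your proposal is correct and follows essentially the same route as the paper: the paper's (one-line) proof likewise invokes Remark~\ref{rem14} to identify the modelling spaces with $L(G)$ and the $L(G_\alpha)$, observes that all translates of a weak direct limit chart of a Lie group are again weak direct limit charts, and then applies both parts of Proposition~\ref{inftesiml}. Your write-up merely makes explicit the details the paper leaves tacit (the verification of conditions (a)--(c) for the translated chart and the cofinality argument replacing $\alpha_0$ by $\alpha_1$), and these details are carried out correctly.
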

In the case
of Lie groups, other simple hypotheses
lead to conclusions similar to the preceding ones.
We write $G_0$
for the connected component of the identity element~$1$
in a topological group~$G$.
If $G$ is a Lie group, then $G_0=G_{(1)}$
coincides with the path component.
\begin{la}\label{compli}
Consider a Lie group
$G=\bigcup_{\alpha\in A}G_\alpha$
which is a directed union of Lie groups
$($such that each inclusion map
is a smooth homomorphism$)$.
Assume that
$G$ and each $G_\alpha$ has an exponential map,
$L(G)=\bigcup_\alpha L(G_\alpha)$,
and that $\exp_G(L(G))$ is an
identity neighbourhood in~$G$.
Then the identity component of
$G$ is the union $G_0=\bigcup_{\alpha\in A}(G_\alpha)_0$.
\end{la}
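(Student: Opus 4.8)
The plan is to set $H:=\bigcup_{\alpha\in A}(G_\alpha)_0$, regarded as a subset of~$G$ via the inclusions, and to establish the two inclusions $H\sub G_0$ and $G_0\sub H$ separately, the second being the substantial one. For $H\sub G_0$, I would note that for each $\alpha$ the set $(G_\alpha)_0$ is connected and contains~$1$, so its image under the continuous inclusion $G_\alpha\to G$ is a connected subset of~$G$ through~$1$, hence is contained in the identity component~$G_0$. Along the way I would record that $H$ is a subgroup of~$G$: given $g\in(G_\alpha)_0$ and $h\in(G_\beta)_0$, directedness supplies some $\gamma\geq\alpha,\beta$, and continuity of the inclusions $G_\alpha\to G_\gamma$ and $G_\beta\to G_\gamma$ forces $(G_\alpha)_0,(G_\beta)_0\sub(G_\gamma)_0$; since $(G_\gamma)_0$ is a subgroup, $gh$ and $g^{-1}$ lie in $(G_\gamma)_0\sub H$.

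For the reverse inclusion I would exploit the exponential maps, the key step being to show $\exp_G(L(G))\sub H$. Fix $x\in L(G)$; by hypothesis $L(G)=\bigcup_\alpha L(G_\alpha)$, so $x\in L(G_\alpha)$ for some~$\alpha$, where $L(G_\alpha)$ is a linear subspace of~$L(G)$. Naturality of the exponential map with respect to the smooth homomorphism $G_\alpha\to G$ gives $\exp_G(x)=\exp_{G_\alpha}(x)$: the smooth homomorphism $t\mapsto\exp_{G_\alpha}(tx)$, composed with the inclusion, is a one-parameter subgroup of~$G$ with velocity~$x$ at $t=0$, hence coincides with $t\mapsto\exp_G(tx)$ by uniqueness of one-parameter subgroups, and evaluation at $t=1$ yields the claim. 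The same path $t\mapsto\exp_{G_\alpha}(tx)$, $t\in[0,1]$, is continuous in~$G_\alpha$ and joins~$1$ to $\exp_{G_\alpha}(x)$, so $\exp_{G_\alpha}(x)\in(G_\alpha)_{(1)}=(G_\alpha)_0\sub H$. Thus $\exp_G(L(G))\sub H$.

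To finish, I would invoke the hypothesis that $\exp_G(L(G))$ is an identity neighbourhood: then the subgroup~$H$ contains an identity neighbourhood and is therefore open, and an open subgroup is automatically closed. Since $H\sub G_0$, the set~$H$ is a nonempty, relatively open and closed subset of the connected space~$G_0$, whence $H=G_0$ by connectedness. This gives $G_0=\bigcup_{\alpha\in A}(G_\alpha)_0$, as desired. I expect the only delicate point to be the naturality identity $\exp_G(x)=\exp_{G_\alpha}(x)$ together with the fact that $\exp_{G_\alpha}(x)$ lands in $(G_\alpha)_0$; everything else reduces to the standard ``open subgroup is closed'' maneuver, and the hypothesis that $\exp_G(L(G))$ is an identity neighbourhood is precisely what converts the local information carried by~$\exp$ into the openness of~$H$.
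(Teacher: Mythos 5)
Your proof is correct and takes essentially the same route as the paper's: both form the union $S:=\bigcup_{\alpha\in A}(G_\alpha)_0$, observe it is a subgroup contained in $G_0$, use naturality of the exponential map to get $\exp_G(L(G))\subseteq S$, and then conclude $G_0\subseteq S$ from the hypothesis that $\exp_G(L(G))$ is an identity neighbourhood. The only difference is that you spell out the details (uniqueness of one-parameter subgroups, the open-subgroup-is-closed connectedness argument) that the paper's three-line proof leaves implicit.
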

\begin{proof}
$S:=\bigcup_{\alpha\in A}(G_\alpha)_0$
is a subgroup of~$G_0$.
Each $v\in L(G)$
belongs to $L(G_\alpha)$
for some~$\alpha$.
Then $\exp_G(v)=\exp_{G_\alpha}(v)\in (G_\alpha)_0\sub S$,
by naturality of~$\exp$.
Thus $\exp_G(L(G))\sub S$, whence
$G_0\sub S$ and therefore $G_0=S$.
\end{proof}
\section{Technical preparations}\label{secprp}
We now prove
several preparatory lemmas,
which will be used in the next section to establish
our main result (Theorem~\ref{newthm}).
The first lemma yields extensions
of continuous maps from the boundary $\partial\Delta$
of a simplex~to~all~of~$\Delta$.\\[2.5mm]
We start with the following setting:
Let $E$ be a topological vector space,
$F$ be a finite-dimensional vector space,
$v_1,\ldots, v_r\in F$ be affinely independent
points, $\Delta:=\conv\{v_1,\ldots, v_r\}$
and $b:=\sum_{j=1}^r\frac{1}{r}v_j$ be the
barycentre of~$\Delta$. We pick (and fix) any
point $x_\Delta \in \partial\Delta$.
To $\gamma\in C(\partial \Delta,E)$, we associate
a function $\Phi(\gamma)\colon \Delta\to E$
as follows:
Given $x\in \Delta$,
there exists a proper face~$X$ of $\Delta$ such that
$x\in \conv(X\cup\{b\})$.
Then $X=\conv(J)$ for a proper subset $J \subset \{v_1,\ldots, v_r\}$
and
\begin{equation}\label{frmx}
x\; =\; tb+\sum_{j\in J}t_jv_j
\end{equation}
with uniquely determined non-negative
real numbers $t$ and $t_j$ for $j\in J$ such that
$t+\sum_{j\in J}t_j=1$. We define
\begin{equation}\label{2conv}
\Phi(\gamma)(x)\; :=\,
\left\{
\begin{array}{cl}
t\gamma(x_\Delta)+(1-t)\gamma\Big(\frac{\sum_{j\in J}t_jv_j}{1-t}\Big) &
\mbox{\,if $\,t<1$;}\\
\gamma(x_\Delta) &\mbox{\,if $\,t=1$.}
\end{array}
\right.
\end{equation}
This definition is independent
of the choice of~$X$,
as follows from the following consideration:
If also $x\in X'=\conv(J')$ and $x=t'b+\sum_{j\in J'}t_j'v_j$,
then $x\in X \cap X'=\conv(J\cap J')$
and thus $t_j=0$ for all $j\in J\setminus J'$
as well as $t_j'=0$ for all $j\in J'\setminus J$.
Now $t_j=t_j'$ for all $j\in J\cap J'$, by uniqueness.
\begin{la}[Filling Lemma]\label{basicfill}
In the preceding situation, we have:
\begin{itemize}
\item[\rm(a)]
For each $\gamma\in C(\partial\Delta,E)$,
the function
$\Phi(\gamma)\colon\Delta\to E$ is
continuous, and $\Phi(\gamma)|_{\partial \Delta}=\gamma$.
\item[\rm(b)]
$\Phi\colon C(\partial\Delta,E)_{c.o.}\to C(\Delta,E)_{c.o.}$,
$\gamma\mto \Phi(\gamma)$
is continuous and linear.
\item[\rm(c)]
If $\gamma$ is constant, taking the value~$y$,
then also $\Phi(\gamma)(x)=y$ for all $x\in \Delta$.
\end{itemize}
\end{la}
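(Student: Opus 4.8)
The plan is to present $\Delta$ as the cone over its boundary $\partial\Delta$ with apex the barycentre~$b$, to recognise $\Phi(\gamma)$ as the straight-line cone map induced on this quotient, and then to read off all three assertions from Lemma~\ref{sammeltaxi}. First I would introduce
\[
q\colon \partial\Delta\times[0,1]\to\Delta,\qquad q(y,t):=(1-t)y+tb .
\]
This map is continuous and surjective, since every $x\in\Delta$ lies on the segment joining $b$ to the point at which the ray from $b$ through~$x$ meets~$\partial\Delta$. As a continuous surjection from a compact space onto the Hausdorff space~$\Delta$, the map $q$ is closed and hence a quotient map; moreover $q^{-1}(K)$ is compact and $q(q^{-1}(K))=K$ for every compact $K\sub\Delta$, and $q^{-1}(b)=\partial\Delta\times\{1\}$ is the only non-trivial fibre.

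Comparing with the barycentric description~(\ref{frmx})--(\ref{2conv}), one checks that for each $\gamma\in C(\partial\Delta,E)$
\[
\Phi(\gamma)\big(q(y,t)\big)=(1-t)\gamma(y)+t\gamma(x_\Delta)\qquad\text{for all }(y,t)\in\partial\Delta\times[0,1] .
\]
Writing $\widetilde\gamma$ for the right-hand side (a continuous map $\partial\Delta\times[0,1]\to E$, built from $\gamma$, the coordinate~$t$ and the continuous operations of~$E$), this says $\widetilde\gamma=\Phi(\gamma)\circ q$; note the right-hand side is constant on the fibre $q^{-1}(b)$, consistent with the factorization. Since $q$ is a quotient map and $\widetilde\gamma$ is continuous, $\Phi(\gamma)$ is continuous, which is the non-trivial part of~(a) (well-definedness having been settled before the lemma). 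The identity $\Phi(\gamma)|_{\partial\Delta}=\gamma$ follows from $q(y,0)=y$, and if $\gamma$ is constant with value~$y_0$ then $\widetilde\gamma\equiv y_0$, giving~(c).

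For~(b), linearity of $\Phi$ is immediate from~(\ref{2conv}) because the coefficients $t$ and $1-t$ do not involve~$\gamma$. For continuity, I would consider
\[
\Xi\colon C(\partial\Delta,E)_{c.o.}\times(\partial\Delta\times[0,1])\to E,\qquad \Xi\big(\gamma,(y,t)\big):=(1-t)\gamma(y)+t\gamma(x_\Delta) .
\]
As $\partial\Delta$ is compact, evaluation is continuous by Lemma~\ref{sammeltaxi}(e), so $(\gamma,y)\mto\gamma(y)$ and $(\gamma,y)\mto\gamma(x_\Delta)$ are continuous; combined with continuity of scalar multiplication and addition in the topological vector space~$E$, this makes $\Xi$ continuous. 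Hence $\Psi:=\Xi^\vee\colon C(\partial\Delta,E)_{c.o.}\to C(\partial\Delta\times[0,1],E)_{c.o.}$, $\gamma\mto\widetilde\gamma$, is continuous by Lemma~\ref{sammeltaxi}(c). The map $C(q,E)\colon C(\Delta,E)_{c.o.}\to C(\partial\Delta\times[0,1],E)_{c.o.}$, $f\mto f\circ q$, is continuous by Lemma~\ref{sammeltaxi}(b); it is injective (as $q$ is surjective) and in fact a topological embedding, because each subbasic set satisfies $\lfloor K,W\rfloor=C(q,E)^{-1}\big(\lfloor q^{-1}(K),W\rfloor\big)$ (using $q(q^{-1}(K))=K$), so that the topology of $C(\Delta,E)_{c.o.}$ is initial with respect to $C(q,E)$. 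Since $C(q,E)\circ\Phi=\Psi$ is continuous and $C(q,E)$ is an embedding, $\Phi$ is continuous.

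The main obstacle is the continuity of $\Phi$ in~(b). The direct route --- proving joint continuity of $(\gamma,x)\mto\Phi(\gamma)(x)$ and then currying --- breaks down at the apex $x=b$, where the radial projection $x\mto\pi(x)$ is discontinuous and the term $(1-t)\gamma(\pi(x))$ would have to be controlled by a boundedness estimate on $\gamma(\partial\Delta)$. Passing through the quotient $q$ dissolves this singularity for free, leaving only the one genuinely non-formal point, that $C(q,E)$ is a topological embedding; this rests on lifting compacta along~$q$ via $K=q(q^{-1}(K))$, which holds precisely because $q$ is a proper surjection of a compact space onto a Hausdorff one.
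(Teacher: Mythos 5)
Your proof is correct, but it takes a genuinely different route from the paper's. The paper establishes joint continuity of $\Phi^\wedge\colon C(\partial\Delta,E)_{c.o.}\times\Delta\to E$ directly: it covers $C(\partial\Delta,E)\times\Delta$ by the closed sets $C(\partial\Delta,E)\times\conv(J\cup\{b\})$, invokes the Glueing Lemma, and checks convergence along nets; the delicate case is exactly the apex ($t=1$), which the paper handles with a quantitative estimate --- $\gamma(\partial\Delta)$ is compact, hence bounded, hence contained in $\rho V$ for a balanced $0$-neighbourhood $V$, and for $\gamma_\alpha$ in a suitable compact-open neighbourhood of $\gamma$ with $1-t_\alpha<\rho^{-1}$ the remainder $R_\alpha$ lands in a prescribed $0$-neighbourhood. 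Parts (a) and (b) then follow by currying via Lemma~\ref{sammeltaxi}\,(c). You avoid that estimate entirely: writing $\Delta$ as the cone $q\colon\partial\Delta\times[0,1]\to\Delta$ and observing $\Phi(\gamma)\circ q=\widetilde{\gamma}$, the singular behaviour of the radial projection at $b$ is absorbed into two soft compactness facts --- $q$ is a closed surjection of a compact space onto a Hausdorff one, hence a quotient map (giving (a) and (c)), and $C(q,E)$ is a topological embedding because $q^{-1}(K)$ is compact and $q(q^{-1}(K))=K$ (giving (b)). Both arguments are sound, and they buy different things: the paper's is elementary and self-contained (nets, the Glueing Lemma, the definition) and produces the jointly continuous $\Phi^\wedge$ in one stroke; yours is softer and more structural, and nothing is lost, since joint continuity of $(\gamma,x)\mapsto\Phi(\gamma)(x)$ can be recovered from your (b) by Lemma~\ref{sammeltaxi}\,(d), as $\Delta$ is compact. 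One small step you should write out rather than assert: the identity $\Phi(\gamma)(q(y,t))=(1-t)\gamma(y)+t\gamma(x_\Delta)$ needs the one-line computation that for $y=\sum_{j\in J}s_jv_j$ in a proper face one has $q(y,t)=tb+\sum_{j\in J}(1-t)s_jv_j$, so the coefficients in~(\ref{frmx}) are $t_j=(1-t)s_j$ and the formula~(\ref{2conv}) reduces to yours; this is where the uniqueness assertion preceding the lemma enters.
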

\begin{proof}
We first note that $\Phi(\gamma)|_{\partial\Delta}=\gamma$,
by construction.
Next, we claim that the map
\[
\Phi^\wedge\colon C(\partial \Delta, E)\times\Delta\to E\,,\quad
\Phi^\wedge(\gamma,x)\,:=\,\Phi(\gamma)(x)
\]
is continuous. If this is true, then
$\Phi(\gamma)=\Phi^\wedge(\gamma,\sbull)$ is continuous,
proving~(a).
Moreover, $\Phi=(\Phi^\wedge)^\vee$
will be continuous, by Lemma~\ref{sammeltaxi}\,(c).
Since~$\Phi$ is linear by definition,
this gives~(b). Property (c) holds by construction.\\[2.5mm]
The sets $C(\partial \Delta,E)\times \conv(J\cup\{b\})$
form a finite cover of $C(\partial \Delta,E)\times \Delta$
by closed sets,
if $J$ ranges through the proper
subsets of $\{v_1,\ldots, v_r\}$.
Hence~$\Phi^\wedge$ will
be continuous if its restriction to
each set $C(\partial \Delta,E)\times \conv(J\cup\{b\})$ is continuous
(by the Glueing Lemma, \cite[Satz~3.7]{Que}).
To verify this property, let $(\gamma_\alpha,x_\alpha)$
be a convergent net in
$C(\partial \Delta,E)\times \conv(J\cup\{b\})$,
with limit $(\gamma,x)$. We write $x_\alpha=t_\alpha b+\sum_{j\in J}
t_{j,\alpha}v_j$
and $x=tb +\sum_{j\in J}t_jv_j$ as above.
Then $t_\alpha\to t$ and $t_{j,\alpha}\to t_j$.\\[2.5mm]
Case 1: If $t<1$, then $t_\alpha<1$ eventually and
\begin{eqnarray*}
\Phi^\wedge(\gamma_\alpha,x_\alpha)
&= & t_\alpha\,\gamma_\alpha(x_\Delta)+(1-t_\alpha)\,
\gamma_\alpha
\Big(\frac{\sum_{j\in J}t_{j,\alpha}v_j}{1-t_\alpha}\Big)\\
&\to &
t\,\gamma (x_\Delta)+(1-t)\, \gamma
\Big(\frac{\sum_{j\in J}t_jv_j}{1-t}\Big) \, =\, \Phi^\wedge(\gamma,x)\,,
\end{eqnarray*}
exploiting that the evaluation map
$C(\partial \Delta,E)\times \partial \Delta\to E$,
$(\eta,y)\mto \eta(y)$ is continuous
because $\partial \Delta$ is compact (see
Lemma~\ref{sammeltaxi}\,(e)).\\[2.5mm]
Case 2: If $t=1$, then
\[
\Phi^\wedge(\gamma_\alpha, x_\alpha)-\Phi^\wedge(\gamma,x)
\, =\, \gamma_\alpha(x_\Delta)-\gamma(x_\Delta)+R_\alpha
\]
where $R_\alpha=0$ if $t_\alpha=1$ while
\[
R_\alpha=(t_\alpha-1)\gamma_\alpha(x_\Delta)+(1-t_\alpha)
\, \gamma_\alpha\hspace*{-.7mm}\left(\sum_{j\in J}\frac{t_{j,\alpha}v_j}{1-t_\alpha}\right)\vspace{-2.7mm}
\]
if $t_\alpha<1$. Since $\gamma_\alpha(x_\Delta)-\gamma(x_\Delta)\to 0$
by continuity of evaluation (see Lemma~\ref{sammeltaxi}\,(e)),
it only remains to show
that $R_\alpha\to 0$. To verify this, let $U\sub E$ be
a balanced $0$-neighbourhood.
Pick a
balanced open $0$-neighbourhood $V\sub E$ such that $V+V+V+V \sub U$.
Since $\gamma(\partial \Delta)$ is compact and hence
bounded,
there exists $\rho>0$ such that $\gamma(\partial \Delta)
\sub \rho V$.
Then $\gamma(\partial\Delta)+ \rho V$ is
an open neighbourhood of $\gamma(\partial\Delta)$
and hence $\Omega:=
\lfloor \partial \Delta,\gamma(\partial\Delta)+\rho V\rfloor$
is a neighbourhood of~$\gamma$ in $C(\partial\Delta,E)_{c.o.}$.
For~$\alpha$ sufficiently large,
we have $\gamma_\alpha\in \Omega$
and $1-t_\alpha<\rho^{-1}$.
If $t_\alpha=1$,
then $R_\alpha=0\in U$.
If $t_\alpha<1$, then
$R_\alpha \in (1-t_\alpha)(\gamma_\alpha(\partial\Delta)-\gamma_\alpha(x_\Delta))
\subseteq (1-t_\alpha)(\gamma(\partial\Delta)+\rho V-
\gamma(\partial\Delta)-\rho V)
\subseteq (1-t_\alpha)(\rho V+\rho V+\rho V+\rho V)\sub
(1-t_\alpha)\rho U\sub U$
as well. Thus $R_\alpha\to 0$.
\end{proof}
\begin{numba}\label{repeatset}
The next lemmas refer to a setting
already encountered in Theorem~\ref{manthm}:
$M$ is a topological space
and $(M_\alpha)_{\alpha\in A}$ a directed family of
topological spaces whose union
$M_\infty:=\bigcup_{\alpha\in A}M_\alpha$
is dense in~$M$.
We assume that all inclusion maps $M_\alpha\to M$
and $M_\alpha\to M_\beta$ $($for $\alpha\leq\beta)$
are continuous.
Furthermore, we assume that~$M$ admits well-filled charts.
\end{numba}
\begin{la}\label{insUm}
In the situation of {\bf\ref{repeatset}},
let $\alpha\in A$ and
$K\sub M_\alpha$
be a compact set such that
$K \sub U^{(2)}$
for a core $U^{(2)}$ of a well-filled chart $\phi\colon U\to V$
$($as in Definition~{\rm\ref{defwlfl}}$)$.
Then there exists $\beta\geq \alpha_0$
such that $ K \sub U_\beta^{(2)}$.
\end{la}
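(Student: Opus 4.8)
The plan is to reduce the assertion to the single claim that $K\sub U_\beta$ for some $\beta\geq\alpha_0$; upgrading this to $K\sub U_\beta^{(2)}$ is then a short bookkeeping step. First I would record that $K$ lands in $U_\infty$: since $K\sub U^{(2)}\sub U$ and $K\sub M_\alpha\sub M_\infty$, condition~(d) gives $K\sub U\cap M_\infty=U_\infty=\bigcup_{\gamma\geq\alpha_0}U_\gamma$. Thus every $x\in K$ lies in some $U_{\gamma(x)}$ with $\gamma(x)\geq\alpha_0$.

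The heart of the argument is a finite-subcover step carried out inside $M_\alpha$ (not inside $M$, where the $U_\gamma$ need not be open). Here I would use directedness of $A$: for each $x\in K$ choose $\delta(x)\geq\gamma(x),\alpha$, so that $U_{\gamma(x)}\sub U_{\delta(x)}$ by~(b) and, crucially, the inclusion $M_\alpha\to M_{\delta(x)}$ is continuous. Hence $U_{\delta(x)}\cap M_\alpha$ is an open neighbourhood of~$x$ in~$M_\alpha$, and these sets form an open cover of the compact space~$K$. Extracting a finite subcover indexed by $\delta_1,\ldots,\delta_n$ gives $K\sub\bigcup_i(U_{\delta_i}\cap M_\alpha)$; choosing $\beta\geq\alpha_0$ with $\beta\geq\delta_i$ for all $i$, condition~(b) yields $\bigcup_i U_{\delta_i}\sub U_\beta$, so $K\sub U_\beta$.

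It remains to pass from $K\sub U_\beta$ to $K\sub U_\beta^{(2)}$. Since $K\sub U_\beta$, condition~(a) gives $\phi|_{U_\beta}=\lambda_\beta\circ\phi_\beta$, so under the standing identification of $\lambda_\beta$ with the inclusion $E_\beta\sub E$ we have $\phi_\beta(K)=\phi(K)$, and this set lies in $V_\beta=\phi_\beta(U_\beta)$. On the other hand $K\sub U^{(2)}=\phi^{-1}(V^{(2)})$ forces $\phi(K)\sub V^{(2)}$. Combining, $\phi_\beta(K)\sub V^{(2)}\cap V_\beta=V_\beta^{(2)}$, that is $K\sub\phi_\beta^{-1}(V_\beta^{(2)})=U_\beta^{(2)}$, as desired.

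The only genuine obstacle is the covering step, and it is purely a matter of locating the cover in the correct space: the sets $U_\gamma$ are open in their respective $M_\gamma$ but in general not in~$M$, so one must transport them into~$M_\alpha$ along the continuous inclusions, which forces the use of directedness to arrange $\delta(x)\geq\alpha$. I note that conditions~(e) and~(f) of Definition~\ref{defwlfl} are not needed for this lemma; they enter only in the later convexity-based arguments.
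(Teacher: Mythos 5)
Your proof is correct and uses essentially the same mechanism as the paper: a finite-subcover argument for the compact set $K$ carried out inside $M_\alpha$, relying on directedness of $A$, condition~(b), and continuity of the inclusion maps to make the relevant sets open in $M_\alpha$. The only difference is cosmetic: the paper covers $K$ directly by the relatively open, directed family $K\cap U_\beta^{(2)}$ (so that compactness immediately yields one member containing all of $K$), whereas you first place $K$ in some $U_\beta$ and then upgrade to $U_\beta^{(2)}$ via the chart-compatibility identity $\phi|_{U_\beta}=\phi_\beta$ --- both routes invoke exactly the same facts from Definition~\ref{defwlfl}.
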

\begin{proof}
We may assume that $\alpha\geq \alpha_0$, where $\alpha_0$ is as
in Definition~\ref{defwlfl}.
The sets
$K\cap U_\beta^{(2)}$ (for $\beta\geq \alpha$)
form an open cover
of~$K$,\vspace{-.7mm}
and a directed family of sets.
Since $K$ is compact,
there is $\beta\geq \alpha$
such that $K\sub K\cap U_\beta^{(2)}$.
\end{proof}
If $M$ admits well-filled charts,
then there are well-filled
charts with arbitrarily small domain around each point.
More precisely:
\begin{la}\label{core4}
In the situation of~{\bf\ref{repeatset}},
let $q\in M$ and $W$ be a neighbourhood of~$q$ in~$M$.
Then there exists a well-filled chart
$\phi\colon U\to V$ and a core $U^{(2)}$ of~$\phi$
such that $q\in U^{(2)}\sub U\sub W$.
\end{la}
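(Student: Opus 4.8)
The plan is to invoke the standing hypothesis that $M$ admits well-filled charts, extract one whose core already contains~$q$, and then shrink its domain so as to fit inside~$W$. Concretely, since cores of well-filled charts cover~$M$, I first obtain a well-filled chart $\phi\colon U\to V\sub E$ with core $U^{(2)}=\phi^{-1}(V^{(2)})$ such that $q\in U^{(2)}$; as always I assume $E_\alpha\sub E$ with the $\lambda_\alpha,\lambda_{\beta,\alpha}$ being inclusions. Because $W$ is a neighbourhood of~$q$ and $U$ is open in~$M$, the set $U':=U\cap\Int_M(W)$ (the intersection of~$U$ with the interior of~$W$ in~$M$) is an open neighbourhood of~$q$ with $q\in U'\sub W$, and $\phi':=\phi|_{U'}\colon U'\to V':=\phi(U')$ is a homeomorphism onto a relatively open subset $V'$ of~$V$. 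The candidate chart is $\phi'$; the work is to furnish it with a core and to verify (a), (b), (d), (e), (f) of Definition~\ref{defwlfl} for the same $\alpha_0$, the restricted charts $\phi'_\alpha:=\phi_\alpha|_{U_\alpha\cap U'}$, and unchanged linear maps.

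First I would record the bookkeeping. Writing $V'_\alpha:=\phi_\alpha(U_\alpha\cap U')$, injectivity of~$\phi$ together with $\phi|_{U_\alpha}=\phi_\alpha$ gives $V'_\alpha=V_\alpha\cap V'$, hence $V'_\infty:=\bigcup_{\alpha\geq\alpha_0}V'_\alpha=V_\infty\cap V'$; likewise each $U_\alpha\cap U'$ is open in~$M_\alpha$ and $U'_\infty=U'\cap M_\infty$, so conditions (a), (b) and~(d) are inherited directly from~$\phi$.

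The heart of the matter, and the step I expect to be the main obstacle, is the construction of the new core, because~$E$ need not be locally convex and so no convex neighbourhood of $v:=\phi(q)$ is available. Here I would use the continuity of the convex-combination map $c\colon[0,1]\times V^{(2)}\times V^{(2)}\to V$, $(t,x,y)\mapsto tx+(1-t)y$, which does land in~$V$ precisely because $\conv_2(V^{(2)})\sub V$. Since $c(t,v,v)=v\in V'$ for every~$t$ and $V'$ is open in~$V$, the preimage $c^{-1}(V')$ is an open set containing the compact slice $[0,1]\times\{(v,v)\}$; the tube lemma (after intersecting the two factors of the resulting basic neighbourhood) yields an open neighbourhood $N$ of~$v$ in $V^{(2)}$ with $\conv_2(N)\sub V'$. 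I set $V'^{(2)}:=N$. As $N\sub\conv_2(N)\sub V'$ and $N$ is open in~$V$, the set $V'^{(2)}$ is a non-empty relatively open subset of~$V'$.

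Finally I would verify (e) and~(f) for this core, which is routine once the inclusion $\conv_2(N)\sub V'$ is in hand. For~(e): from $N\sub V^{(2)}$ and $N\sub V'$ one gets $V'^{(2)}_\infty=N\cap V_\infty\sub V^{(2)}_\infty$, so $\conv_2(V'^{(2)}_\infty)\sub\conv_2(V^{(2)}_\infty)\sub V_\infty$ by the original~(e), while at the same time $\conv_2(V'^{(2)}_\infty)\sub\conv_2(N)\sub V'$; intersecting gives $\conv_2(V'^{(2)}_\infty)\sub V_\infty\cap V'=V'_\infty$, and $\conv_2(V'^{(2)})\sub\conv_2(N)\sub V'$ is immediate. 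For~(f): any compact $K\sub V'^{(2)}\cap V'_\alpha=N\cap V_\alpha\sub V^{(2)}_\alpha$ admits, by the original~(f), some $\beta\geq\alpha$ with $\conv_2(K)\sub V_\beta$; combined with $\conv_2(K)\sub\conv_2(N)\sub V'$ this yields $\conv_2(K)\sub V_\beta\cap V'=V'_\beta$. Hence $\phi'$ is a well-filled chart with core $U'^{(2)}:=\phi^{-1}(N)$, and $q\in U'^{(2)}\sub U'\sub W$, as required.
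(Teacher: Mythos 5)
Your proof is correct, and it establishes the lemma by a genuinely different mechanism at the crucial step. The overall skeleton agrees with the paper's: restrict the given chart whose core contains~$q$, note that (a), (b), (d) are inherited by the restricted data, construct a smaller core, and verify (e), (f) by intersecting the old inclusions with the new domain (your verifications of (e) and (f) are essentially the same as the paper's). The difference lies in how the core is produced in the absence of local convexity. The paper chooses a \emph{balanced} open $0$-neighbourhood $Q\sub E$ with $(\wb{\phi}(q)+Q+Q)\cap\wb{V}$ inside the image of~$W$, takes the new domain to be $(\wb{\phi}(q)+Q+Q)\cap \wb{V}$ and the new core to be $(\wb{\phi}(q)+Q)\cap\wb{V}^{(2)}$; the needed control $\conv_2\bigl(\wb{\phi}(q)+Q\bigr)\sub \wb{\phi}(q)+Q+Q$ is pure algebra of balanced sets ($tQ\sub Q$ for $|t|\leq 1$). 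You instead keep the domain as large as possible, namely $U\cap \Int_M(W)$, and obtain the core by a soft argument: continuity of $(t,x,y)\mto tx+(1-t)y$ into~$V$ (which uses exactly condition~(e) for the old chart) together with the tube lemma over the compact factor $[0,1]$. Both devices work in arbitrary, not necessarily locally convex, topological vector spaces. What the paper's explicit construction buys is re-usability: Lemma~\ref{u4v4} and step~(f) of Lemma~\ref{pivotal} invoke ``the construction from the proof of Lemma~\ref{core4}'' and its concrete form (e.g.\ the containments $V^{(4)}_{\Delta,\sigma}\sub V^{(2)}_{\Delta,\sigma}$). Your construction is shorter and choice-free, and it does not break these downstream uses: applied with $W:=U^{(2)}$ it gives $U'=U^{(2)}$, $V'=V^{(2)}$ and a core $N$ with $\conv_2(N)\sub V^{(2)}$, which is precisely what Lemma~\ref{u4v4} asserts; one would only have to rephrase the later references to the construction accordingly.
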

\begin{proof}
By hypothesis, there exists a well-filled
chart
$\wb{\phi}\colon \wb{U}\to\wb{V}\sub E$
and a core $\wb{U}^{(2)}$ thereof such that
$q\in \wb{U}^{(2)}$.
Let $\alpha_0$ and the homeomorphism
$\wb{\phi}_\alpha\colon
M_\alpha\supseteq \wb{U}_\alpha\to \wb{V}_\alpha$ for $\alpha\geq \alpha_0$
be as in
Definition~\ref{defwlfl},
and $\wb{U}_\infty:=\bigcup_{\alpha\geq \alpha_0}\wb{U}_\alpha$.
There exists a balanced, open $0$-neighbourhood
$Q\sub E$ such that $V:=(\wb{\phi}(q)+Q+Q)\cap \wb{V}\sub
\wb{\phi}(W\cap \wb{U})$.
Set
$V^{(2)}:=(\wb{\phi}(q)+Q)\cap \wb{V}^{(2)}\!\!$,
$\, U:=\wb{\phi}^{-1}(V)$,
$U^{(2)}:=\wb{\phi}^{-1}(V^{(2)})$,
$U_\alpha:=\wb{U}_\alpha\cap U$,
$V_\alpha:=\phi(U_\alpha)=\wb{V}_\alpha\cap V$,
$U_\infty:=\bigcup_{\alpha\geq\alpha_0}U_\alpha$
and $\phi_\alpha:=\wb{\phi}_\alpha|_{U_\alpha}^{V_\alpha}$.
Then $q\in U^{(2)}\sub U\sub W$.
Furthermore, $\phi:=\wb{\phi}|_U^V\colon U\to V$
is a well-filled chart. In fact,
(a) and (b) required in Definition~\ref{defwlfl}
hold by construction.
Since $U\cap M_\infty=U\cap\wb{U}\cap M_\infty=
U\cap \wb{U}_\infty=U\cap \bigcup_{\alpha\geq \alpha_0}\wb{U}_\alpha
=\bigcup_{\alpha\geq \alpha_0}U\cap \wb{U}_\alpha
=\bigcup_{\alpha\geq \alpha_0}U_\alpha$,
also (d) holds.
Next, observe that
$\conv_2(V^{(2)})\sub (\wb{\phi}(q)+Q+Q)\cap \wb{V}=V$.
Moreover,
$V_\infty:=\bigcup_{\alpha\geq\alpha_0}V_\alpha=
V\cap \wb{V}_\infty$
and $V^{(2)}_\infty:=V^{(2)}\cap V_\infty$
satisfy
$\conv_2(V^{(2)}_\infty)\sub
V \cap \conv_2(\wb{V}^{(2)}_\infty)
\sub
V \cap \wb{V}_\infty =V_\infty$.
Hence (e) holds.
To verify~(f), let $\alpha\geq\alpha_0$
and $K\sub V^{(2)}_\alpha:=V^{(2)}\cap V_\alpha$
be a compact set. Then $\conv_2(K)\in \wb{V}_\beta$
for some $\beta\geq\alpha$.
Since also $\conv_2(K)\sub \phi(q)+Q+Q$,
we deduce that $\conv_2(K)\sub (\phi(q)+Q+Q)\cap \wb{V}_\beta
=V\cap \wb{V}_\beta = V_\beta$.
\end{proof}
\begin{la}\label{u4v4}
In the situation
of {\bf\ref{repeatset}},
let $q\in M$,
$\phi\colon U\to V$ be a well-filled chart
and $U^{(2)}\sub U$ be a core of~$\phi$ such that
$q\in U^{(2)}$.
Then there exists an open neighbourhood
$U^{(4)}\sub U^{(2)}$
of~$q$ such that $V^{(4)}:=\phi(U^{(4)})$
satisfies $\conv_2(V^{(4)})\sub V^{(2)}$.
\end{la}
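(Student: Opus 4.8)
The plan is to do everything in the chart and reduce the statement to a purely vector-space assertion about $V^{(2)}$. Set $v:=\phi(q)$; since $q\in U^{(2)}=\phi^{-1}(V^{(2)})$ we have $v\in V^{(2)}$, and because $V^{(2)}$ is relatively open in $V$ we may write $V^{(2)}=V\cap O$ for some open set $O\sub E$. As $\phi\colon U\to V$ is a homeomorphism, it suffices to produce a relatively open set $V^{(4)}\sub V^{(2)}$ with $v\in V^{(4)}$ and $\conv_2(V^{(4)})\sub V^{(2)}$; then $U^{(4)}:=\phi^{-1}(V^{(4)})$ is open in~$U$ (hence in~$M$), contains~$q$, is contained in $U^{(2)}$, and satisfies $\phi(U^{(4)})=V^{(4)}$.

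The key tool is the continuity of convex combinations. First I would consider the map $m\colon E\times E\times[0,1]\to E$, $m(x,y,t):=tx+(1-t)y$, which is continuous since addition and scalar multiplication in the topological vector space~$E$ are continuous. Because $m(v,v,t)=v\in O$ for every $t\in[0,1]$, the compact slice $\{(v,v)\}\times[0,1]$ is contained in the open set $m^{-1}(O)$. Applying the tube lemma (using compactness of $[0,1]$) yields an open neighbourhood $N$ of~$v$ in~$E$ with $N\times N\times[0,1]\sub m^{-1}(O)$, that is, $\conv_2(N)\sub O$.

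I would then set $V^{(4)}:=N\cap V^{(2)}$, which is relatively open in~$V$, contains~$v$, and satisfies $V^{(4)}\sub V^{(2)}$. For $x,y\in V^{(4)}$ and $t\in[0,1]$ the point $tx+(1-t)y$ lies in~$O$ (since $x,y\in N$ and $\conv_2(N)\sub O$) and also lies in~$V$ (since $x,y\in V^{(2)}$ and $\conv_2(V^{(2)})\sub V$ by condition~(e) of Definition~\ref{defwlfl}); hence it lies in $V\cap O=V^{(2)}$. This gives $\conv_2(V^{(4)})\sub V^{(2)}$, and pulling back through~$\phi$ completes the argument.

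The only delicate point is that~$E$ is not assumed to be locally convex, so one cannot simply shrink to a convex neighbourhood of~$v$ and conclude $\conv_2(V^{(4)})\sub V^{(4)}$ outright. The tube-lemma step is precisely what circumvents this: compactness of the parameter interval $[0,1]$ converts the pointwise continuity of~$m$ into a single neighbourhood~$N$ that works uniformly in~$t$, while condition~(e) is invoked to keep the combinations inside~$V$. Both ingredients are essential, but each is routine, so I expect no serious obstacle.
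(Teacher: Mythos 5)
Your proof is correct, but it takes a different route from the paper's. The paper proves this lemma in one line, by re-running the shrinking construction from the proof of Lemma~\ref{core4} with $\wb{\phi}:=\phi$ and $W:=U^{(2)}$: there one picks a \emph{balanced} open $0$-neighbourhood $Q\sub E$ with $(\phi(q)+Q+Q)\cap V\sub V^{(2)}$ and takes $V^{(4)}:=(\phi(q)+Q)\cap V^{(2)}$, so that balancedness gives $\conv_2(V^{(4)})\sub(\phi(q)+Q+Q)\cap\conv_2(V^{(2)})\sub(\phi(q)+Q+Q)\cap V\sub V^{(2)}$. You instead obtain the needed ``small convex combinations stay small'' fact from the tube lemma applied to the continuous map $(x,y,t)\mto tx+(1-t)y$ over the compact factor $[0,1]$ (with the routine refinement of the resulting neighbourhood of $(v,v)$ to a product $N\times N$), and then, exactly as the paper does, you invoke condition~(e) of Definition~\ref{defwlfl} to keep the combinations inside~$V$. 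Both arguments rest on the same continuity of the vector operations, and both are elementary; your version is self-contained and proves precisely the stated lemma. What the paper's heavier route buys is extra structure: its $U^{(4)}$ is by construction the core of a \emph{restricted well-filled chart}, and this is exploited later, in the proof of part~(f) of Lemma~\ref{pivotal}, where it is explicitly assumed that $U^{(4)}_\Delta$ ``has been obtained by applying the construction from the proof of Lemma~\ref{core4}.'' So your proof is a valid and arguably cleaner substitute for the lemma as stated, but it could not be substituted into the paper wholesale without also reworking that later step.
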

\begin{proof}
Apply the construction from
the proof of Lemma~\ref{core4}
to $\wb{\phi}:=\phi$ and $W:=U^{(2)}$.
\end{proof}
The next lemma is the technical backbone of this article.
Given a continuous map
$\gamma_0\colon |\Sigma|\to M$,
it ensures that any $\gamma$ close to~$\gamma_0$
can be approximated by a
continuous map $\eta_\gamma\colon |\Sigma|\to M$
nearby, which has specific additional properties.
In our later applications,
we shall only need
the approximation
$\eta_{\gamma_0}$ to~$\gamma_0$.
However, the inductive proof
makes it necessary to
formulate and prove the lemma
in the stated form.
\begin{la}[Simultaneous Approximations]\label{pivotal}
In the setting of {\bf\ref{repeatset}}, let\linebreak
$\Sigma$
be a finite simplicial complex,
$\gamma_0\colon |\Sigma|\to M$
be a continuous function
and $Q\sub C(|\Sigma|,M)_{c.o.}$
be a neighbourhood of~$\gamma_0$.
Let $\cE\sub |\Sigma|$ be a subset such that
$\cE = \bigcup\, \{\Delta\in \Sigma\colon \Delta\sub \cE\}$.
Then there exist
a finite subset $S \sub |\Sigma|$
containing $\cV(\Sigma)$,
an open neighbourhood
$P$ of $\gamma_0$ in $C(|\Sigma|,M)_{c.o.}$,
and a continuous map
$\Theta\colon P \times |\Sigma|\times [0,1]\to M$
with the following properties:
\begin{itemize}
\item[\rm(a)]
$\Theta(\gamma,\sbull,0)=\gamma$,
for each $\gamma\in P$;
\item[\rm(b)]
$\Theta(\gamma,\sbull,t)\in Q$,
for each $\gamma\in P$
and $t\in [0,1]$;
\item[\rm(c)]
For each $\gamma\in P$,
the map
$\eta_\gamma:=\Theta(\gamma,\sbull,1)\colon |\Sigma|\to M$
only depends\linebreak
on $\gamma|_{S\cup\cE}$.
Also,
for each $\Delta\in \Sigma$,
the restriction $\eta_\gamma|_\Delta$
only depends on $\gamma|_{(S\cup\cE)\cap\Delta}$;
\item[\rm(d)]
Let $\gamma\in P$
such that $\gamma(S\cup \cE)\sub M_\alpha$
for some $\alpha\in A$,
and $\gamma|_{\cE}\colon \cE\to M_\alpha$
is continuous.
Then there exists $\beta \geq \alpha$ such that
$\eta_\gamma$ takes its values in $M_\beta$
and is continuous as a map to~$M_\beta$;
\item[\rm(e)]
$F_\gamma:=\Theta(\gamma,\sbull)\colon |\Sigma|\times [0,1]\to M$
is a homotopy from $\gamma$ to $\eta_\gamma$,
for each $\gamma\in P$;
\item[\rm(f)]
If $\gamma\in P$ is
such that $\im(\gamma)\sub M_\alpha$
for some $\alpha\in A$
and \mbox{$\gamma|^{M_\alpha} \colon |\Sigma|\to M_\alpha$}
is continuous,
then there exists $\beta \geq \alpha$ such that
$\im(F_\gamma)\sub M_\beta$
and $F_\gamma\colon |\Sigma|\times [0,1]\to M_\beta$
is continuous;
\item[\rm(g)]
If $\gamma\in P$ and $\Delta\in \Sigma$ are
such that $\gamma|_\Delta$ is a constant function,
taking the value $y\in M$, say,
then $F_\gamma(x,t)=y$ for all $x\in \Delta$
and $t\in [0,1]$;
\item[\rm(h)]
$F_\gamma(x,t)=\gamma(x)$ for all $\gamma\in P$,
$x\in S\cup\cE$ and $t\in [0,1]$.
\end{itemize}
\end{la}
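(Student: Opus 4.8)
The plan is to prove the lemma by induction on the rank $r=\rk(\Sigma)$, after a preliminary subdivision that forces every simplex to map into (the core of) a single well-filled chart, together with enough room inside that chart to perform all the fillings that the induction will require. First I would fix the charts: since $|\Sigma|$ is compact and $\gamma_0$ continuous, $\gamma_0(|\Sigma|)$ is covered by finitely many cores of well-filled charts (obtained with small domains via Lemma~\ref{core4}). Choosing a Lebesgue number for this cover and applying the diameter estimate~\eqref{estbsd} repeatedly, I pass to a barycentric subdivision $\Sigma':=\bsd^N(\Sigma)$ fine enough that $\gamma_0(\Delta')$ lies in one core $U^{(2)}(\Delta')$ for every $\Delta'\in\Sigma'$. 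As the cores are open and the $\Delta'$ compact, the finitely many sets $\lfloor\Delta',U^{(2)}(\Delta')\rfloor$ cut out an open neighbourhood $P$ of $\gamma_0$ on which $\gamma(\Delta')\sub U^{(2)}(\Delta')$ for all $\gamma\in P$. I take $S:=\cV(\Sigma')$; barycentric subdivision retains the original vertices, so $\cV(\Sigma)\sub S$, and $\cE$ (a union of simplices of $\Sigma$) is also a union of simplices of $\Sigma'$. Since $|\Sigma'|=|\Sigma|$ and each $\Delta\in\Sigma$ is a union of simplices of $\Sigma'$, properties (c), (d) and (g) for $\Sigma$ follow from the same statements for $\Sigma'$, so I may replace $\Sigma$ by $\Sigma'$. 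Finally, inside each chart I would iterate Lemma~\ref{u4v4} to produce a descending chain of nested cores $V\supset V^{(2)}\supset V^{(4)}\supset\cdots$ of length governed by $r$, shrinking $P$ so that each $\gamma\in P$ maps every cell into the innermost core; the point of the chain is that each single filling step will then climb exactly one level, and will thus be a single $\conv_2$ operation.

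Then I would run the induction. For $r=1$ the polyhedron $|\Sigma|$ is the finite discrete set $S$, so I set $\eta_\gamma:=\gamma$, take $\Theta$ to be the constant homotopy, and every assertion is immediate (for (d) note that any map out of a discrete set into $M_\alpha$ is continuous, so $\beta=\alpha$ works). For the inductive step, let $\Sigma^{-}$ be the subcomplex of all simplices of rank $\le r-1$; it has rank $r-1$, so the inductive hypothesis applies to $\gamma|_{|\Sigma^-|}$. Feeding it the neighbourhood $\cE$ and a $Q$ small enough to keep the boundary approximations inside the cores, I obtain a continuous $\Theta^{-}$ on $P\times|\Sigma^-|\times[0,1]$ straightening every $\gamma\in P$ on $\bigcup_{\rk(\Delta)=r}\partial\Delta$, with $\eta_\gamma|_{\partial\Delta}$ lying back in the core $V^{(2)}(\Delta)$ (property (b) of the inductive hypothesis); restriction is continuous by Lemma~\ref{sammeltaxi}(b), so this is compatible with the ambient $P$.

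Over each top simplex $\Delta$ I then extend, working inside its chart $\phi_\Delta\colon U_\Delta\to V_\Delta$: I define $\eta_\gamma|_\Delta:=\phi_\Delta^{-1}\circ\Phi(\phi_\Delta\circ\eta_\gamma|_{\partial\Delta})$ with $\Phi$ the Filling Lemma~\ref{basicfill}, and I extend the homotopy over $\Delta\times[0,1]$ by filling the prism from the data already prescribed on $(\partial\Delta\times[0,1])\cup(\Delta\times\{0\})$ (triangulate the prism and apply $\Phi$), so that the new homotopy restricts to $\Theta^{-}$ on the sides and to $\gamma$ on the bottom. Continuity of $\Theta$ in the pair $(\gamma,x)$ follows from Lemma~\ref{basicfill}(b) together with Lemma~\ref{sammeltaxi}; consistency on shared faces holds because $\Phi$ fixes boundary values (Lemma~\ref{basicfill}(a)); constancy is preserved by Lemma~\ref{basicfill}(c), giving (g); and I keep the homotopy stationary on every simplex contained in $\cE$ and at every vertex, giving (h) and, with (a), also (e). Each filling step is a single $\conv_2$ of boundary values lying in one core, so condition~(e) of Definition~\ref{defwlfl} keeps the values inside $V_\Delta$ (this is exactly the role of the nested cores) and maps $M_\infty$-valued data to $M_\infty$-valued data; (b) is arranged by the same containment. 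For (d) and (f): if $\gamma(S\cup\cE)\sub M_\alpha$ then the relevant data lie in $V_\alpha\cap V^{(2)}(\Delta)$, and since $\eta_\gamma(\Delta)$ and $F_\gamma(\Delta\times[0,1])$ are compact, Lemma~\ref{insUm} and condition~(f) of Definition~\ref{defwlfl} produce, after absorbing the finitely many top cells into one index, a single $\beta\ge\alpha$ with the pertinent $\conv_2$-sets inside $V_\beta$; hence $\eta_\gamma$ and $F_\gamma$ co-restrict continuously to $M_\beta$.

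The hard part will be the bookkeeping that makes a single level of $\conv_2$ suffice at every stage, since the well-filled axioms (e) and (f) say nothing about $\conv_n$ for $n>2$, whereas filling a cell of rank $r$ or its prism naively accumulates nested convex combinations. My resolution is the chain of cores from iterating Lemma~\ref{u4v4}, arranged so that rank increases by one exactly as the core level decreases by one, together with the use of the parameter $Q$ in the inductive hypothesis to guarantee that the straightened boundary map $\eta_\gamma|_{\partial\Delta}$ returns to the core $V^{(2)}(\Delta)$ rather than merely to $V_\Delta$ — only then is the ensuing top-dimensional filling a legitimate single $\conv_2$ step covered by (e) and (f). Maintaining this invariant while simultaneously preserving continuous dependence on $\gamma$ across all top simplices and their overlaps, and while never leaving the chart domains, is the delicate point; it is precisely what forces the statement to be formulated \emph{simultaneously} for all $\gamma$ in a neighbourhood $P$, rather than for the single map $\gamma_0$.
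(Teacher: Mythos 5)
Your outline reproduces the paper's strategy: induction on the rank, a preliminary barycentric subdivision (using Lemma~\ref{core4}, Lemma~\ref{u4v4} and the estimate~(\ref{estbsd})) fitting every cell into nested cores of well-filled charts, the inductive hypothesis applied to the subcomplex of rank $\leq r-1$ with a shrunken parameter $Q^*$ that forces the inductive homotopy back into the deep sets $U^{(4)}_\Delta$, the Filling Lemma~\ref{basicfill} to extend over top cells, and Lemma~\ref{insUm} together with Definition~\ref{defwlfl}\,(f) for properties (d) and (f). The one genuine divergence is how the homotopy over a top cell $\Delta$ is produced. The paper writes it down in closed form in two stages: for $t\in[0,\frac{1}{2}]$ the straight-line homotopy in the chart from $\phi_\Delta\circ\gamma|_\Delta$ to the filling $\Phi_\Delta(\phi_\Delta\circ\gamma|_{\partial\Delta})$, and for $t\in[\frac{1}{2},1]$ the filling $\Phi_\Delta(\Xi_\Delta(\gamma|_{|\Sigma^*|},2t-1))$ of the boundary homotopy. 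You instead triangulate the prism $\Delta\times[0,1]$ and fill it cell by cell. This can be made to work (with the standard prism triangulation the faces needing filling have codimension at most one, so the procedure nests $\conv_2$ exactly twice, the same depth as the paper's formula), but it buys nothing and leaves real unstated work: a choice of filling order, compatibility of the filled prisms across shared faces and with the prescribed data on sides and bottom, joint continuity of the assembled map in $(\gamma,x,t)$, and the check that the time slices $F_\gamma(\sbull,t)$ lie in $Q$ --- all of which are immediate from the paper's explicit formula.

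Two points are genuinely off or missing. First, your bookkeeping invariant --- ``rank increases by one exactly as the core level decreases by one,'' backed by a chain of cores of length governed by $r$ --- is neither what happens nor what is needed: nesting of $\conv_2$ never accumulates \emph{across} ranks, because the containment requirement is passed down through the parameter $Q^*$ and the inductive output is constrained by its property (b) to return to the sets $U^{(4)}_\Delta$, regardless of what happened internally in the lower-rank charts (this is your own hedge, and it is the actual mechanism); within a single rank both constructions nest exactly twice, so the two levels $U^{(4)}\sub U^{(2)}$ from one application of Lemma~\ref{u4v4} suffice. Second, two verifications are glossed over: (i) for property (b) you must tie the chart domains to the given $Q$, i.e.\ first write $\gamma_0\in\bigcap_j\lfloor K_j,W_j\rfloor\sub Q$ and choose the charts via Lemma~\ref{core4} with domains inside the relevant $W_j$; your construction never decomposes $Q$, so ``values stay in the chart domains'' does not yet yield $\Theta(\gamma,\sbull,t)\in Q$; and (ii) in (d)/(f), Definition~\ref{defwlfl}\,(f) applies only to compacta inside the \emph{core} $V^{(2)}$, so after each $\conv_2$ one must re-establish core membership (via the set-level containments $\conv_2(V^{(4)})\sub V^{(2)}$ and Lemma~\ref{insUm}) before (f) can be applied again with a larger index; this iteration is exactly the delicate part of the paper's proof of~(f) and has to be carried out, not merely cited.
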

\begin{proof}
The proof is by induction on the rank~$r$ of~$\Sigma$.
We assume that $r=1$ first,
in which case $S:=|\Sigma|$
is a finite subset
of a finite-dimensional
vector space~$F$.
Then
$P:=Q$
and $\Theta\colon P \times |\Sigma|\times [0,1]\to M$,
$\Theta(\gamma,x,t):=\gamma(x)$
have the asserted properties.\\[2.5mm]
To perform the induction step,
let $\Sigma$ be a simplicial
complex of rank $r\geq 2$
and assume that the assertion
holds for complexes of rank $r-1$.\\[2.5mm]
By definition of the compact-open topology,
there exist $\ell\in \N$, compact subsets $K_j\sub |\Sigma|$
for $j\in \{1,\ldots, \ell\}$
and open sets $W_j\sub M$ such that
\begin{equation}\label{pregood}
\gamma_0\; \in \; \bigcap_{j=1}^\ell \; \lfloor K_j,W_j\rfloor
\; \sub \; Q\,.
\end{equation}
Our first objective is to make more intelligent choices
of the sets $K_j$ and $W_j$. We shall improve them in
several steps.\\[2.5mm]
Since $\lfloor K_j,W_j\rfloor=\bigcap_{\Delta\in \Sigma}
\lfloor K_j\cap \Delta, W_j\rfloor$,
we may assume without loss of generality
that each $K_j$ is a subset of
some $\Delta_j\in \Sigma$.
Since $\gamma_0^{-1}(W_j)\cap \Delta_j$
is an open neighbourhood of $K_j$ in $\Delta_j$,
there exists $m_j\in \N$ such that
$\gamma_0(\Delta')\sub W_j$
for all $\Delta'\in \bsd^{m_j}(\Delta_j)$
such that $\Delta'\cap K_j\not=\emptyset$ (cf.\ (\ref{estbsd})).
Let $m$ be the maximum of the $m_j$
for $j\in \{1,\ldots, \ell\}$.
After replacing
$K_j$ by all $\Delta'\in \bsd^m(\Delta_j)$
such that $\Delta'\cap K_j\not=\emptyset$,
and after replacing $\Sigma$ with $\bsd^m(\Sigma)$,
we may assume without loss
of generality that $K_j\in \Sigma$ for each $j$.
Given $\Delta\in \Sigma$, define
$W_\Delta$ as the intersection of
the $W_j$, for all $j\in \{1,\ldots,\ell\}$
such that $K_j=\Delta$
(with the convention that
$\bigcap\emptyset:=M$).
Improving (\ref{pregood}),
we now have
\[
\gamma_0\;\in\; \bigcap_{\Delta\in \Sigma}\,
\lfloor \Delta, W_\Delta\rfloor
\;\sub\; Q\,.
\]
In the next step, we replace some $W_\Delta$
by cores of well-filled charts.\\[2.5mm]
Recall that $|\Sigma|\sub F$ for
some finite-dimensional vector space~$F$;
we choose any norm $\|.\|$ on~$F$
and let $d$ be the metric on~$|\Sigma|$
arising from~$\|.\|$.
Given $\Delta'\in \Sigma$
and $x\in \Delta'$,
there exists a well-filled
chart $\phi_{\Delta',x}\colon U_{\Delta',x}\to V_{\Delta',x}$ of~$M$
such that $U_{\Delta',x}\sub W_{\Delta'}$
and $\gamma_0(x)\in U^{(2)}_{\Delta', x}$
for some core $U^{(2)}_{\Delta', x}$ of~$\phi_{\Delta', x}$,
by Lemma~\ref{core4}.
Let $U^{(4)}_{\Delta', x}\sub U^{(2)}_{\Delta', x}$
be a neighbourhood of $\gamma_0(x)$
as in Lemma~\ref{u4v4}.
Since $\gamma_0$ is continuous, $x$ has an open
neighbourhood $Y_{\Delta', x}$ in $\Delta'$
such that $\gamma_0(Y_{\Delta' , x})\sub U^{(4)}_{\Delta', x}$.
Choose $\delta>0$ such that~$\delta$
is a Lebesgue number
for the open cover $(Y_{\Delta', x})_{x\in \Delta'}$
of $\Delta'$,
for each $\Delta'\in \Sigma$.
There exists $m\in \N$
such that $\diam(\Delta)<\delta$
for each $\Delta\in \bsd^m(\Sigma)$ (cf.\ (\ref{estbsd})).
Given $\Delta\in \bsd^m(\Sigma)$,
there exists a unique $\Delta'\in \Sigma$
such that $\Delta\in \bsd^m(\Delta')$
but $\Delta\not\in \bsd^m(\Delta'')$
for each proper face $\Delta''$ of~$\Delta'$.
We
pick $x\in \Delta'$ such that
$\Delta\sub Y_{\Delta', x}$
and set $\phi_\Delta:=\phi_{\Delta',x}$, $U_\Delta:=U_{\Delta',x}$,
$V_\Delta:=V_{\Delta', x}$, $U^{(2)}_\Delta:=U^{(2)}_{\Delta',x}$
and $U^{(4)}_\Delta:=U^{(4)}_{\Delta', x}$.
We let $E_\Delta$ be the topological vector space with $V_\Delta\sub E_\Delta$.
Then $\gamma_0\in \lfloor \Delta, U^{(4)}_\Delta\rfloor$.
As before, replace $\Sigma$ with
$\bsd^m(\Sigma)$ for simplicity of notation.
Let $\Sigma^*$ be the simplicial
complex formed by all simplices
$\Delta\in \Sigma$ of rank at most
$r-1$.
Given $\Delta'\in \Sigma^*$,
let $Z_{\Delta'}:=\bigcap_\Delta U_\Delta^{(4)}$,
where $\Delta$ ranges through all
$\Delta\in \Sigma$ such that $\Delta'\sub \Delta$.
We have achieved the following:
\begin{itemize}
\item[(i)]
$U_\Delta$ is the domain of a well-filled chart
of~$M$, for each simplex
$\Delta\in \Sigma$ of rank~$r$;
\item[(ii)]
If $\Delta,\Delta' \in \Sigma$
such that $\rk(\Delta)=r$
and $\Delta'$ is a proper subset of $\Delta$,
then $Z_{\Delta'} \sub U_\Delta^{(4)}$;
\item[(iii)]
$\gamma_0\in
R \, :=\,
\bigcap_{\Delta\in \Sigma^*}
\lfloor \Delta,Z_\Delta \rfloor\cap
\bigcap_{\Delta\in \Sigma\setminus \Sigma^*}
\lfloor \Delta,U_\Delta^{(4)}\rfloor
\,\sub\,
\bigcap_{\Delta\in \Sigma }\lfloor \Delta,U_\Delta \rfloor
\sub Q$.
\end{itemize}
Define
\[
Q^*\; :=\; 
\bigcap_{\Delta \in \Sigma^*}\lfloor \Delta ,Z_\Delta \rfloor\;\sub\;
C(|\Sigma^*|,M)\,.
\]
By induction, there exists an
open neighbourhood $P^*\sub C(|\Sigma^*|,M)_{c.o.}$
of $\gamma_0|_{|\Sigma^*|}$,
a continuous map $\Theta^*\colon
P^* \times |\Sigma^*|\times [0,1]\to M$
and a finite subset $S\sub |\Sigma^*|$
with $\cV(\Sigma)=\cV(\Sigma^*)\sub S$
satisfying analogues of (a)--(h),
with $\Sigma$ replaced
by $\Sigma^*$, $P$ by $P^*$, $Q$ by~$Q^*$, $\Theta$ by~$\Theta^*$,
and $\cE$ by $\cE^*:=\cE\cap|\Sigma^*|$.
We let
\[
P\;:=\; \{\gamma\in R \colon \gamma|_{|\Sigma^*|}\in P^*\}\,;
\]
by Lemma~\ref{sammeltaxi}\,(b),
this is an open neighbourhood
of $\gamma_0$ in $C(|\Sigma|,M)_{c.o.}$.\\[2mm]
To enable a piecewise definition of~$\Theta$,
let $\Delta\in \Sigma$ be a simplex
of rank~$r$ (which we fix for the moment).
The well-filled
chart $\phi_\Delta \colon U_\Delta\to V_\Delta\sub E_\Delta=:E$
from above goes along with
$\alpha_0\in A$, homeomorphisms $\phi_{\Delta,\alpha}\colon M_\alpha
\supseteq U_{\Delta,\alpha}\to V_{\Delta,\alpha}\sub E_{\Delta,\alpha}$
and sets
$U_{\Delta,\infty}$, $V_{\Delta,\infty}$, $E_{\Delta,\infty}\sub E$,
$V^{(2)}_\Delta$, $V^{(2)}_{\Delta,\infty}$
(etc.) as in Definition~\ref{defwlfl}.
Then
\begin{equation}\label{use3}
\Theta^*(\gamma,x,t)\in U^{(4)}_\Delta\quad
\mbox{for all $\gamma\in P^*$, $x\in\partial\Delta$ and $t\in [0,1]$.}
\end{equation}
In fact, given $x\in \partial\Delta$, there exists
a proper face $\Delta'$ of~$\Delta$
such that $x\in \Delta'$.
Now $\Theta^*(\gamma,x,t)\in Z_{\Delta'}\sub U^{(4)}_\Delta$,
by definition of~$Q^*$ and~$Z_{\Delta'}$.
The preceding enables us to define a map
$\Xi_\Delta\colon P^*\times [0,1]
\to C(\partial\Delta,E)$
via
\[
\Xi_\Delta(\gamma,t):=\phi_\Delta \circ
\Theta^*(\gamma,\sbull,t)|_{\partial\Delta}\,.
\]
As a consequence of (\ref{use3}),
the map~$\Xi_\Delta$
has image in $C(\partial\Delta, V^{(4)}_\Delta)$;
and by Lemma~\ref{sammeltaxi}\,(a)--(c),
$\Xi_\Delta$ is continuous.
We pick $x_\Delta \in S\cap\partial\Delta$
and let
\[
\Phi_\Delta\,:=\,\Phi\colon C(\partial \Delta,E)\to C(\Delta,E)
\]
be as in Lemma~\ref{basicfill}.
By (\ref{2conv}), the values of
$\Phi_\Delta(\gamma)$
lie in
$\conv_2(\im(\gamma))$.
Hence
\begin{equation}\label{enab}
\Phi_\Delta(\gamma)(\Delta)\,\sub\, V_\Delta^{(2)}\quad
\mbox{for each $\gamma\in \lfloor \partial \Delta,V_\Delta^{(4)}\rfloor
\,\sub\, C(\partial \Delta,E)$.}
\end{equation}
Because $\gamma(x)\in U_\Delta^{(4)}$
and thus $\phi_\Delta(\gamma(x))\in V_\Delta^{(4)}$
for all $\gamma\in P\sub R$
and $x\in \Delta$,
we can define
a map $\Theta_\Delta\colon P\times \Delta\times[0,1]\to U_\Delta\sub M$
via
\[
\Theta_\Delta(\gamma,x,t) :=
\left\{
\begin{array}{cl}
\phi^{-1}_\Delta\big((1-2t)\,\phi_\Delta(\gamma(x))+2t\,
\Phi_\Delta(\phi_\Delta\circ \gamma|_{\partial\Delta})(x)\big) &\,
\mbox{if $\,t\in [0,\frac{1}{2}]$;}\\[1mm]
\phi^{-1}_\Delta\big(\Phi_\Delta(\Xi_\Delta(\gamma|_{|\Sigma^*|},2t-1))(x)\big)
&
\,\mbox{if $\,t\in [\frac{1}{2},1]$.}
\end{array}
\right.
\]
This map is continuous as a consequence
of Lemma~\ref{sammeltaxi} (a), (b) and (e).
We now define a map
$\Theta\colon P\times|\Sigma|\times [0,1]\to M$,
as follows:
If $x\in \Delta$
for some $\Delta\in \Sigma$ such that $\rk(\Delta)=r$
and $\Delta\not\sub \cE$,
we set
\[
\Theta(\gamma,x,t)\, :=\,
\Theta_\Delta(\gamma,x,t)\,.
\]
If $x\in \Delta$ for some $\Delta\in \Sigma$
such that $\rk(\Delta)=r$ and $\Delta\sub \cE$,
we set
\[
\Theta(\gamma,x,t)\; := \; \gamma(x)\,.
\]
If $x\in |\Sigma^*|$, we define
\[
\Theta(\gamma,x,t)\, :=\,
\left\{
\begin{array}{cl}
\gamma(x) & \,\mbox{if $\,t\in [0,\frac{1}{2}]$;}\\[1mm]
\Theta^*(\gamma|_{|\Sigma^*|},x,2t-1) &\,\mbox{if $\,t
\in [\frac{1}{2},1]$.}
\end{array}
\right.
\]
If $\Delta\in \Sigma$ is a simplex of rank~$r$,
then $\Theta_\Delta(\gamma,x,t)=\gamma(x)$
for all $t\in [0,\frac{1}{2}]$
and $x\in \partial \Delta$.
Therefore $\Theta$ is well defined.
By the Glueing Lemma, $\Theta$ is continuous.
It remains to show that $\Theta$ satisfies
all of (a)--(h).\vspace{1.3mm}

(a) Let $\gamma\in P$ and $x\in |\Sigma|$.
If $x\in |\Sigma^*|$, then $\Theta(\gamma,x,0)=\gamma(x)$
by definition of~$\Theta$.
Otherwise, $x\in\Delta$ for some $\Delta\in \Sigma$
of rank~$r$. If $\Delta\sub \cE$, then $\Theta(\gamma,x,0)=\gamma(x)$
by definition of~$\Theta$. If $\Delta\not\sub \cE$,
then
$\Theta(\gamma,x,0)=\Theta_\Delta(\gamma,x,0)=\gamma(x)$
by definition of $\Theta_\Delta$.\vspace{1.3mm}

(b) Let $\gamma\in P$ and $t\in [0,\frac{1}{2}]$.
Let $\Delta\in \Sigma$.
If $\rk(\Delta)<r$,
then $x\in|\Sigma^*|$ for each $x\in\Delta$
and thus $\Theta(\gamma,x,t)=
\gamma(x)\in Z_\Delta\sub U_\Delta$
(since $P\sub R$), i.e.,
\begin{equation}\label{rpt1}
\Theta(\gamma,\sbull,t) \in\lfloor \Delta,U_\Delta\rfloor\, .
\end{equation}
Now assume that $\Delta$ has rank~$r$.
If $\Delta\sub \cE$,
then $\Theta(\gamma,x,t)=
\gamma(x)\in U_\Delta$.
If $\Delta\not\sub \cE$,
then $\Theta(\gamma,x,t)=
\Theta_\Delta(\gamma,x,t)\in U_\Delta$
for each $x\in \Delta$,
whence again (\ref{rpt1}) holds.
Thus
\begin{equation}\label{rept}
\Theta(\gamma,\sbull, t) \; \in \; \bigcap_{\Delta\in \Sigma}\,
\lfloor\Delta,U_\Delta\rfloor
\;\sub \; Q\,.
\end{equation}
Now let $\gamma\in P$ and $t\in [\frac{1}{2},1]$.
Let $\Delta\in \Sigma$. If $\rk(\Delta)=r$,
we see as before that (\ref{rpt1}) holds.
If $\rk(\Delta)<r$, we exploit that
$\gamma|_{|\Sigma^*|}\in P^*$ by definition of~$P$,
whence $\Theta^*(\gamma|_{|\Sigma^*|},\sbull ,2t-1)\in Q^*\sub
\lfloor \Delta, U_\Delta\rfloor\sub C(|\Sigma^*|,M)$.
Then
$\Theta(\gamma,x,t)=\Theta^*(\gamma|_{|\Sigma^*|},x,2t-1)\in U_\Delta$
for each $x\in \Delta$ and hence (\ref{rpt1}) holds.
Thus (\ref{rept}) is established.\vspace{1.3mm}

(c) It suffices to prove the second
assertion. To this end, let $\Delta\in \Sigma$ and $x\in \Delta$.
If $\Delta\in \Sigma^*$, we have
$\Theta(\gamma,x,1)=\Theta^*(\gamma|_{|\Sigma^*|},x,1)$,
which only depends on $\gamma|_{(S\cup \cE^*)\cap \Delta}$ by the inductive hypothesis.
If $\rk(\Delta)=r$ and $\Delta\not\sub \cE$,
then
$\Xi_\Delta(\gamma|_{|\Sigma^*},1)
=\phi_\Delta\circ\Theta^*(\gamma|_{|\Sigma^*|},\sbull,1)|_{\partial \Delta}$
only depends on $\gamma|_{(S\cup \cE^*)\cap \Delta}$
(because $\partial\Delta$ is
a union of proper faces $\Delta'$ of $\Delta$,
and $\Theta^*(\gamma|_{|\Sigma^*|},\sbull,1)|_{\Delta'}$
only depends on $\gamma|_{(S\cup \cE^*)\cap \Delta'}$).
Hence also
$\Theta(\gamma,x,1)=\Theta_\Delta(\gamma,x,1)=
\phi_\Delta^{-1}(\Phi_\Delta(\Xi_\Delta(\gamma|_{|\Sigma^*|},1))(x))$
only depends on $\gamma|_{(S\cup\cE^*)\cap\Delta}$.
Finally, assume $\rk(\Delta)=r$ and $\Delta\sub \cE$.
Then $\eta_\gamma|_\Delta=\gamma|_\Delta=\gamma|_{\cE\cap \Delta}$
only depends on $\gamma|_{(S\cup\cE)\cap\Delta}$.\vspace{1.3mm}

(d) It suffices to show that for
each $\Delta\in\Sigma$, there exists $\beta\geq \alpha$
such that $\eta_\gamma(\Delta)\sub M_\beta$ and
$\eta_\gamma|_\Delta^{M_\beta}\colon \Delta\to M_\beta$
is continuous.
If $\rk(\Delta)=r$ and $\Delta\sub \cE$,
then the latter holds by hypothesis (with $\beta:=\alpha$),
since $\eta_\gamma|_\Delta=\gamma|_\Delta$.
To tackle the remaining cases,
we exploit that there is $\tau\geq \alpha$
such that $\Theta^*(\gamma|_{|\Sigma^*|},|\Sigma^*|,1)\sub M_\tau$
and $\Theta^*(\gamma|_{|\Sigma^*|},\sbull ,1)|^{M_\tau}\colon
|\Sigma^*|\to M_\tau$ is continuous,
by the inductive hypothesis.\\[2.5mm]
If $\rk(\Delta)<r$, then
$\eta_\gamma(\Delta)\sub
\Theta^*(\gamma|_{|\Sigma^*|},|\Sigma^*|,1)\sub M_\tau$
and $\Theta^*(\gamma|_{|\Sigma^*|},\sbull ,1)|^{M_\tau}_\Delta$
$=\eta_\gamma|_\Delta^{M_\tau}$
is continuous, whence $\beta:=\tau$
satisfies our needs.\\[2.5mm]
Now assume $\rk(\Delta)=r$ and $\Delta\not\sub \cE$.
Since
$\Theta^*(\gamma|_{|\Sigma^*|},\sbull ,1)|_{\partial\Delta}$\vspace{-.5mm}
has
image in $U_\Delta^{(2)}\cap M_\tau$
and is continuous as a map to~$M_\tau$,
using Lemma~\ref{insUm} we find $\sigma \geq\tau$
such that\vspace{-.3mm}
$\Theta^*(\gamma|_{|\Sigma^*|},\partial\Delta,1)\sub U_{\Delta,\sigma}^{(2)}$
and $\Theta^*(\gamma|_{|\Sigma^*|},
\sbull ,1)|_{\partial\Delta}$
is continuous\vspace{-.3mm}
as a map to $U_{\Delta,\sigma}^{(2)}$.
As a consequence,
$\Xi_\Delta(\gamma|_{|\Sigma^*|},1)\in
C(\partial \Delta, V_{\Delta,\sigma}^{(2)})$.
Now $\Phi_\Delta$, restricted to
$C(\partial\Delta, E_{\Delta,\sigma})$, is
a map to $C(\Delta,E_{\Delta,\sigma})$
by Lemma~\ref{basicfill}\,(a) (applied with $E_{\Delta,\sigma}$
rather than $E=E_\Delta$).
Hence $\phi_\Delta\circ \eta_\gamma|_\Delta
=\Phi_\Delta(\Xi_\Delta(\gamma|_{|\Sigma^*|},1))
\in C(\Delta, E_{\Delta,\sigma})$.
The image~$K$ of $\Xi_\Delta(\gamma|_{|\Sigma^*|},1)$
is a compact subset of $V_{\Delta,\sigma}^{(2)}$.
Hence, by Definition~\ref{defwlfl}\,(f),
there exists $\beta\geq \sigma$ such that $\conv_2(K)\sub V_{\Delta,\beta}$.
As a consequence,
$\eta_\gamma(x)=\phi^{-1}_{\Delta,\beta}
\big(\Phi_\Delta(\Xi_\Delta(\gamma|_{|\Sigma^*|},1))(x)\big)
\in U_{\Delta,\beta}$ for each $x\in \Delta$
and $\eta_\gamma|_\Delta$
is continuous as a map to~$U_{\Delta,\beta}$.\vspace{1.3mm}

(e) $F_\gamma$ is continuous and
hence is a homotopy from
$F_\gamma(\sbull,0)=\gamma$ (see (a))
to $F_\gamma(\sbull,1)=\eta_\gamma$.\vspace{1.3mm}

(f) Define $F_\gamma^*
:=\Theta^*(\gamma|_{|\Sigma^*|},\sbull)\colon |\Sigma^*|\times [0,1]\to M$.
It suffices to show that
for each
$\Delta\in \Sigma$, there exists $\beta\geq \alpha$
such that $F_\gamma(\Delta\times [0,1])\sub M_\beta$
and $F_\gamma|_{\Delta\times [0,1]}\colon
\Delta\times [0,1]\to M_\beta$ is continuous.
If $\rk(\Delta)=r$ and $\Delta\sub \cE$,
then $F_\gamma(x,t)=\gamma(x)\in M_\alpha$;
since $\gamma|^{M_\alpha}$ is continuous
by hypothesis, the
desired property is satisfied with $\beta:=\alpha$.
To tackle the remaining cases,
we shall exploit that there exists $\tau\geq \alpha$
such that $F^*_\gamma(|\Sigma^*|\times [0,1])\sub M_\tau$
and $F^*_\gamma$ is continuous
as a map to $M_\tau$,
by the inductive hypothesis.\\[2mm]
If $\rk(\Delta)<r$, then
$F_\gamma(\Delta\times [0,1])\sub
F^*_\gamma(|\Sigma^*|\times [0,1])
\sub M_\tau$ holds
and $F_\gamma|_{\Delta\times [0,1]}^{M_\tau}$ is continuous
as $F_\gamma(\sbull,t)|_\Delta=\gamma|_\Delta^{M_\tau}$
if $t\in [0,\frac{1}{2}]$
and
$F_\gamma(\sbull,t)|_\Delta=F^*_\gamma(\sbull,2t-1)|^{M_\tau}_\Delta$
if $t\in [\frac{1}{2},1]$.
Hence $\beta:=\tau$
does the job.\\[2mm]
Now assume $\rk(\Delta)=r$ and $\Delta\not\sub \cE$.
Since\vspace{-.4mm}
$F^*_\gamma|_{\partial\Delta\times [0,1]}=
\Theta^*(\gamma|_{|\Sigma^*|},\sbull )|_{\partial\Delta\times [0,1]}$
has image
in $U^{(2)}_\Delta$
(by (\ref{use3}))
and is continuous as a map to~$M_\tau$,
using Lemma~\ref{insUm} we find $\sigma\geq\tau$
such that\vspace{-.4mm}
$F^*_\gamma(\partial\Delta\times [0,1])\sub U_{\Delta,\sigma}^{(2)}$
and $F^*_\gamma|_{\partial\Delta
\times [0,1]}$
is continuous as a map to
$U_{\Delta,\sigma}^{(2)}$.
Thus
$\Xi_\Delta(\{\gamma|_{|\Sigma^*|}\} \times [0,1])\sub
C(\partial \Delta, V_{\Delta,\sigma}^{(2)})$.\vspace{-.4mm}
The\linebreak
mapping $\partial \Delta\times [0,1]\to V^{(2)}_{\Delta,\sigma}$,
$(x,t)\mto \phi_\Delta(F^*_\gamma(x,t))
=\Xi_\Delta(\gamma|_{|\Sigma^*|},t)(x)$
is\linebreak
continuous and has compact image $K\sub V^{(2)}_{\Delta,\sigma}$.
By Definition~\ref{defwlfl}\,(f),
there is $\beta\geq \sigma$ such that
$\conv_2(K)\sub V_{\Delta,\beta}$.
Now $\Phi_\Delta$, restricted to
$C(\partial\Delta, E_{\Delta,\sigma})$, is
continuous as a map to $C(\Delta,E_{\Delta,\sigma})_{c.o.}$
by Lemma~\ref{basicfill}\,(a)
(applied with $E_{\Delta,\sigma}$ rather than $E=E_\Delta$),
and hence also as a map to
$C(\Delta, E_{\Delta,\beta})_{c.o.}$.
Furthermore,
$\Phi_\Delta(\Xi_\Delta(\gamma|_{|\Sigma^*|},t))(x)\in \conv_2(K)\sub V_{\Delta,\beta}$
for each $t\in [0,1]$ and $x\in \Delta$, by choice of~$\beta$.
As a consequence,
$F_\gamma\colon \Delta\times [\frac{1}{2},1]\to M_\beta$,
$(x,t)\mto \phi^{-1}_{\Delta,\beta}
\big(\Phi_\Delta(\Xi_\Delta(\gamma|_{|\Sigma^*|},2t-1))(x)\big)$
is a continuous map to~$U_{\Delta,\beta}\sub M_\beta$.\\[2mm]
To tackle also the case $t\in [0,\frac{1}{2}]$,
note that we may assume that $U^{(4)}_\Delta$
has been obtained by applying the construction
from the proof of Lemma~\ref{core4}
to $\wb{\phi}:=\phi_\Delta$ and $W:=U^{(2)}_\Delta$.
Hence, we may assume that \mbox{the restriction of} $\phi_\Delta$ to some subset
of $U^{(2)}_\Delta$
is a well-filled chart with core $U^{(4)}_\Delta$,
and $V^{(4)}_\Delta:=
\phi_\Delta(U^{(4)}_\Delta)=\bigcap_{\theta\geq \alpha_0}V^{(4)}_{\Delta,\theta}$.
Since $\gamma(\Delta)\sub U^{(4)}_\Delta$
and $\gamma|_\Delta$ is a continuous
map to $M_\alpha$,
after increasing $\sigma$ (and $\beta$) we may assume
that $\gamma(\Delta)$ is a compact subset
of $U^{(4)}_{\Delta,\sigma}:=\phi_\Delta^{-1}(V^{(4)}_{\Delta,\sigma})$,
by Lemma~\ref{insUm}.
Thus $L:=\phi_\Delta(\gamma(\Delta))$
is a compact subset of $V^{(4)}_{\Delta,\sigma}$.
Since
$V^{(4)}_{\Delta,\sigma}\sub V^{(2)}_{\Delta,\sigma}$
(by the construction in the proof of Lemma~\ref{core4}),
after increasing $\beta$ if necessary we may assume that
$\conv_2(L)\sub V_{\Delta,\beta}$
(by Definition~\ref{defwlfl}\,(f)).
Moreover, $\conv_2(L)\sub \conv_2(V^{(4)}_\Delta)\sub \phi_\Delta(W)
=V^{(2)}_\Delta$.
Thus $\phi_\Delta^{-1}(\conv_2(L))$
is a compact subset of $M_\beta$
which is contained in $U^{(2)}_\Delta$.
In view of Lemma~\ref{insUm}, after increasing~$\beta$
we may assume that $\phi_\Delta^{-1}(\conv_2(L))$
is a compact subset of $U^{(2)}_{\Delta,\beta}$
and thus $\conv_2(L)\sub V^{(2)}_{\Delta,\beta}$.
Hence $\Phi_\Delta(\phi_\Delta\circ \gamma|_{\partial\Delta})\in
C(\Delta, \conv_2(L))\sub C(\Delta,V^{(2)}_{\Delta,\beta})$
(exploiting that $\Phi_\Delta$ takes $C(\partial\Delta, E_{\Delta,\beta})$
to $C(\Delta, E_{\Delta,\beta})$).
Since $\phi_\Delta$ is a well-filled chart, using
Definition~\ref{defwlfl}\,(f)
we see that $\conv_2(\conv_2(L))\sub V_{\Delta,\beta}$
may be assumed after increasing~$\beta$ further.
Thus $\conv_2(L,\conv_2(L))\sub V_{\Delta,\beta}$
in particular
and we obtain a continuous
map $\Delta \times [0,\frac{1}{2}]\to V_{\Delta,\beta}\sub E_{\Delta,\beta}$,
\[
(x,t)\mto (1-2t)\, \phi_\Delta(\gamma(x))+
2t\, \Phi_\Delta(\phi_\Delta\circ \gamma|_{\partial\Delta})(x)\,.
\]
Hence
$F_\gamma(x,t)=\Theta_\Delta(\gamma,x,t)=
\phi^{-1}_\Delta\big((1-2t)\,\phi_\Delta(\gamma(x))+2t\,
\Phi_\Delta(\phi_\Delta\circ \gamma|_{\partial\Delta})(x)\big)\in
U_{\Delta,\beta}$ holds,
and $F_\gamma\colon \Delta\times [0,\frac{1}{2}]\to 
U_{\Delta,\beta}$ is continuous.\vspace{1.3mm}

(g) Define $F^*_\gamma$ as in the proof of~(f).
If $\Delta\in \Sigma^*$ in the situation of~(g),
then $F_\gamma(x,t)=\gamma(x)=y$ for each $x\in \Delta$
if $t\in [0,\frac{1}{2}]$,
while
$F_\gamma(x,y)=F^*_\gamma(x,2t-1)=y$
if $t\in [\frac{1}{2},1]$,
by the inductive hypothesis.\\[2mm]
Now assume that $\rk(\Delta)=r$.
If $\Delta\sub \cE$, then $F_\gamma(t,x)=\gamma(x)=y$
for each $x\in \Delta$ and $t\in [0,1]$.
If $\Delta\not\sub \cE$ and $t\in [0,\frac{1}{2}]$,
given $x\in \Delta$
we have that $\phi_\Delta(F_\gamma(x,t))$
is a convex combination of the vectors
$\phi_\Delta(\gamma(x'))=\phi_\Delta(y)$
for several $x'\in \Delta$,
and thus $F_\gamma(x,t)=y$.
If $t\in [\frac{1}{2},1]$ and $x\in \Delta$,
then $F_\gamma(x,t)$
is the image under $\phi_\Delta^{-1}$
of a convex combination
of the vectors
$\phi_\Delta(F_\gamma^*(x',t))=\phi_\Delta(y)$
with $x'\in \partial\Delta$.
Since any such convex combination
is $\phi_\Delta(y)$,
it follows that
$F_\gamma(x,t)=y$.\vspace{1.3mm}

(h) Let $\gamma\in P$,
$x\in S$ and
$t\in [0,1]$.
Then $x\in |\Sigma^*|$.
Hence
$F_\gamma(x,t)=\gamma(x)$ if $t\in [0,\frac{1}{2}]$,
while
$F_\gamma(x,t)=F^*_\gamma(x,2t-1)=\gamma(x)$
if $t\in [\frac{1}{2},1]$,
by induction.\\[2.5mm]
Now take $x\in \cE$.
Then $x\in \Delta$ for some $\Delta\in \Sigma$
such that $\Delta\sub \cE$.
If $\rk(\Delta)=r$, then $F_\gamma(x,t)=\gamma(x)$
by definition of~$\Theta$.
If $\rk(\Delta)<r$,
then $\Delta\sub \Sigma^*$
and we see as in the case $x\in S$
that
$F_\gamma(x,t)=\gamma(x)$.
\end{proof}
For a single map
$\gamma_0\colon |\Sigma|\to M$,
we can deduce stronger conclusions.
\begin{la}[Individual Approximations]\label{pivo2}
In the setting
of~{\bf\ref{repeatset}},
let $\Sigma$ be a finite simplicial complex,
$\gamma_0\colon |\Sigma|\to M$
be a continuous function
and $Q\sub C(|\Sigma|,M)_{c.o.}$
be a neighbourhood of~$\gamma_0$.
Let $\cE\sub |\Sigma|$ be a subset such that
$\cE = \bigcup\, \{\Delta\in \Sigma\colon \Delta\sub \cE\}$.
Assume that there exists $\alpha\in A$ such that $\gamma_0(\cE)\sub M_\alpha$
and $\gamma_0|_\cE$ is continuous
as a map to~$M_\alpha$.
Then there exists $\beta\geq \alpha$
and a continuous
map $\eta\colon |\Sigma|\to M_\beta$
such that
$\eta|_\cE=\gamma_0|_\cE$
and $\eta\in Q$.
Moreover,
there exists a homotopy
$H\colon |\Sigma|\times [0,1]\to M$
relative~$\cE$ from $\gamma_0$ to~$\eta$,
such that $H(\sbull,t)\in Q$
for each $t\in [0,1]$.
\end{la}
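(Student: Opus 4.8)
The plan is to deduce the statement from the Simultaneous Approximations Lemma~\ref{pivotal}, applied to the same $\gamma_0$, $Q$ and $\cE$. That lemma produces a finite set $S\sub|\Sigma|$ with $\cV(\Sigma)\sub S$, an open neighbourhood $P$ of $\gamma_0$ in $C(|\Sigma|,M)_{c.o.}$ (with $P\sub Q$, since $\Theta(\gamma,\sbull,0)=\gamma$ lies in $Q$ by properties (a) and (b)), and a continuous map $\Theta\colon P\times|\Sigma|\times[0,1]\to M$ enjoying (a)--(h). The natural candidate for $\eta$ is $\eta_\gamma=\Theta(\gamma,\sbull,1)$, and property~(d) would give the desired $\beta$ together with continuity into $M_\beta$ — but only if the map $\gamma$ fed into $\Theta$ satisfies $\gamma(S\cup\cE)\sub M_{\alpha'}$ for a single index. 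We are given $\gamma_0(\cE)\sub M_\alpha$, yet the finitely many values $\gamma_0(s)$ with $s\in S\setminus\cE$ need not lie in $M_\infty$ at all. The main work is therefore to replace $\gamma_0$, without leaving $P$ and without disturbing it on~$\cE$, by a map $\gamma$ with $\gamma(S)\sub M_\infty$.

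To build $\gamma$ I would argue as follows. Since $\cE=\bigcup\{\Delta\in\Sigma\colon\Delta\sub\cE\}$ is closed and contains none of the finitely many points of $S\setminus\cE$, these points admit pairwise disjoint open neighbourhoods $N_s$ in $|\Sigma|$ with $\overline{N_s}\cap\cE=\emptyset$. For each such $s$, Lemma~\ref{core4} supplies a well-filled chart $\phi_s\colon U_s\to V_s\sub E_s$ with a core $U_s^{(2)}$ and $\gamma_0(s)\in U_s^{(2)}$; shrinking $N_s$, I may assume $\gamma_0(N_s)\sub U_s^{(2)}$. As $V_{s,\infty}$ is dense in $V_s$ and $V_s^{(2)}$ is relatively open, the set $V_{s,\infty}^{(2)}$ is dense in $V_s^{(2)}$, so I can pick $q_s\in V_{s,\infty}^{(2)}$ arbitrarily close to $\phi_s(\gamma_0(s))$, whence $\phi_s^{-1}(q_s)\in M_\infty$. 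Choosing a continuous bump function $\chi_s\colon|\Sigma|\to[0,1]$ supported in $N_s$ with $\chi_s(s)=1$, I define $H_1(x,t):=\gamma_0(x)$ for $x\notin\bigcup_s N_s$ and
\[
H_1(x,t):=\phi_s^{-1}\big((1-t\chi_s(x))\,\phi_s(\gamma_0(x))+t\chi_s(x)\,q_s\big)\qquad(x\in N_s).
\]
The argument is a convex combination of two points of $V_s^{(2)}$, hence lies in $\conv_2(V_s^{(2)})\sub V_s$ by condition~(e) of Definition~\ref{defwlfl}, so $H_1$ is well defined; the two clauses agree where $\chi_s=0$, so $H_1$ is continuous. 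Thus $H_1(\sbull,0)=\gamma_0$, $H_1$ is stationary on $\cE$, and taking the $q_s$ close enough to $\phi_s(\gamma_0(s))$ and the $N_s$ small enough keeps $H_1(\sbull,t)$ inside the open set $P$ for every $t$ (a routine estimate in the compact-open topology). Put $\gamma:=H_1(\sbull,1)\in P$; then $\gamma|_\cE=\gamma_0|_\cE$ and $\gamma(s)=\phi_s^{-1}(q_s)\in M_\infty$ for $s\in S\setminus\cE$, while $\gamma(s)=\gamma_0(s)\in M_\alpha$ for $s\in S\cap\cE$.

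Now $\gamma(S)$ is a finite subset of $M_\infty$, so by directedness $\gamma(S\cup\cE)\sub M_{\alpha'}$ for some $\alpha'\geq\alpha$, and $\gamma|_\cE=\gamma_0|_\cE$ is continuous as a map to $M_\alpha\sub M_{\alpha'}$. Property~(d), used with $\alpha'$ in place of $\alpha$, then yields $\beta\geq\alpha'\geq\alpha$ for which $\eta:=\Theta(\gamma,\sbull,1)$ has values in $M_\beta$ and is continuous into $M_\beta$; moreover $\eta\in Q$ by~(b), and $\eta(x)=\Theta(\gamma,x,1)=\gamma(x)=\gamma_0(x)$ for $x\in\cE$ by~(h), so $\eta|_\cE=\gamma_0|_\cE$. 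This settles the first assertion. For the homotopy I would concatenate (with a reparametrisation of $[0,1]$) the two homotopies $H_1$, from $\gamma_0$ to $\gamma$, and $F_\gamma:=\Theta(\gamma,\sbull)$, from $\gamma$ to $\eta$ by~(e); they match at $\gamma$, so the concatenation $H$ is continuous. Both stages are stationary on $\cE$ (the first by construction, the second by~(h)) and both stay in $Q$ (the first because $H_1(\sbull,t)\in P\sub Q$, the second by~(b)), so $H$ is a homotopy relative~$\cE$ from $\gamma_0$ to $\eta$ with $H(\sbull,t)\in Q$ throughout. The one genuinely non-formal step is the construction of $H_1$; the rest is bookkeeping with (a)--(h) of Lemma~\ref{pivotal}.
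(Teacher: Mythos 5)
Your proposal is correct and follows essentially the same route as the paper's proof: apply Lemma~\ref{pivotal} to $(\gamma_0,Q,\cE)$, construct a preliminary homotopy (your $H_1$, the paper's $G$) relative~$\cE$ and staying inside~$P$, which uses Lemma~\ref{core4} and the density of $V_{s,\infty}^{(2)}$ in $V_s^{(2)}$ to push the finitely many values $\gamma_0(s)$, $s\in S$, into $M_\infty$, and then conclude via properties (a)--(h). The only implementation-level differences are that the paper fuses the two stages into the single ``diagonal'' homotopy $H(x,t)=\Theta(G(\sbull,t),x,t)$ instead of concatenating them, and that it writes its deformation as a nested convex combination $\conv_2(\conv_2(V^{(4)}_x),V^{(4)}_x)$ (hence its appeal to Lemma~\ref{u4v4} and the level $U^{(4)}$), which your single-$\conv_2$ formula, needing only condition~(e) of Definition~\ref{defwlfl}, shows to be avoidable.
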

\begin{proof}
Let $P$, $S$, $\Theta$ and further notation
be as in Lemma~\ref{pivotal}, applied to
the given data $\gamma_0$, $Q$ and $\cE$.
Abbreviate $X:=|\Sigma|$. We claim:\\[2.5mm]
\emph{There exists a homotopy $G\colon X \times [0,1]\to M$
relative $\cE$
from $\gamma_0$ to some $\gamma\colon X \to M$
such that $\gamma(x)\in M_\infty$ for all $x\in S$
and $G(\sbull,t)\in P$ for $t\in [0,1]$.}\\[2.5mm]
If this is true, then $\eta:=\eta_\gamma$ (from Lemma~\ref{pivotal}\,(c))
is a continuous map from~$X$ to
some~$M_\beta$, by Lemma~\ref{pivotal}\,(d).\footnote{Using that
$\gamma|_\cE=\gamma_0|_\cE$ is a continuous map
to~$M_\alpha$.}
Furthermore, the map
\[
H\colon X\times [0,1]\to M\, , \quad
H(x,t)\, := \, \Theta(G(\sbull,t),x,t)
\]
is a homotopy from~$\gamma_0$ to~$\eta$
(cf.\ Lemma~\ref{sammeltaxi}\,(c) for the continuity
of~$H$),
and in fact a homotopy relative~$\cE$,
because $G(\sbull,t)|_\cE=\gamma|_\cE$
and hence $\Theta(G(\sbull,t),x,t)=G(x,t)=\gamma(x)$
for each $x\in \cE$ and $t\in [0,1]$,
by Lemma~\ref{pivotal}\,(h).\\[2.5mm]
\emph{Proof of the claim}.
There exist $\ell \in \N$,
compact sets $K_1,\ldots, K_\ell\sub X$
and open subsets $W_1,\ldots, W_\ell\sub M$
such that
$\gamma_0\in \bigcap_{j=1}^\ell\lfloor K_j, W_j\rfloor\sub P$.
Given $x\in S$, let
$I_x$ be the set of all $j\in \{1,\ldots,\ell\}$
such that $x \in K_j$,
define $J_x:=\{1,\ldots,\ell\}\setminus I_x$,
and $G_x:=X
\setminus \bigcup_{j\in J_x}K_j$.
By Lemma~\ref{core4},
for each $x\in S$
there is
a well-filled chart $\phi_x\colon U_x\to V_x$
such that
$\gamma_0(x)\in U_x^{(2)}$
for some core $U_x^{(2)}$ of~$\phi_x$,
and $U_x\sub \bigcap_{j\in I_x}W_j$
(if $I_x=\emptyset$, we define the preceding
intersection as~$M$).
We choose $U_x^{(4)}\sub U^{(2)}_x$
with $\gamma_0(x)\in U_x^{(4)}$
as in Lemma~\ref{u4v4},
and set $V_x^{(4)}:=\phi_x(U^{(4)}_x)$. 
Pick a metric $d$ on $X$
defining its topology.
There exists $\ve >0$
such that the closed $d$-balls $\wb{B}_\ve(x)\sub X$
for $x\in S$
are pairwise disjoint, $\wb{B}_\ve(x)\sub G_x$,
and $\gamma_0(\wb{B}_\ve(x))\sub U_x^{(4)}$.
Set $T:=\{x\in S\colon \gamma_0(x)\not\in M_\infty\}$.
Since $\gamma_0(\cE) \sub M_\infty$,
we then have $T\sub \;X\setminus \cE$.
Hence, after shrinking~$\ve$ further if necessary, we may
assume that
\begin{equation}\label{hencrel}
\cE \cap \bigcup_{x\in T} \wb{B}_\ve(x)\;=\;
\emptyset\,.
\end{equation}
Given $x\in S$,
pick $v_x\in V_x^{(4)}\cap V_{x,\infty}$,
where $V_{x,\infty}=\bigcup_{\alpha\geq \alpha_0}V_{x,\alpha}$
is as in Definition~\ref{defwlfl}\,(e).
We define $G\colon X\times [0,1]\to M$
for $z\in X$ and $t\in [0,1]$
as follows:
If $z\in X \setminus
\bigcup_{x\in T} B_\ve(x)$,
we set
\[
G(z,t)\; :=\; \gamma_0(z)\, .
\]
If
$z\in \wb{B}_\ve(x)$ for some $x\in T$, we set
\[
G(z,t):=\phi_x^{-1}\left(
t\Big(\Big( 1-\frac{d(z,x)}{\ve}\Big)v_x
+ \frac{d(z,x)}{\ve}\phi_x(\gamma_0(z))\Big)
+(1-t)\phi_x(\gamma_0(z))\right)\,.
\]
Then $G$ is continuous,
and $\gamma:=G(\sbull,1)$ satisfies
$\gamma(x)\in M_\infty$ for all $x\in S$.
In fact: $\gamma(x)=\gamma_0(x)\in M_\infty$
if $x\in S\setminus T$,
while $\gamma(x)=\phi_x^{-1}(v_x)\in M_\infty$
if $x\in T$.
If $z\in \cE$ and $t\in [0,1]$,
then $G(z,t)=\gamma_0(z)$,
by (\ref{hencrel}) and definition of~$G$.
Hence~$G$ is a homotopy relative~$\cE$
from $\gamma_0$ to~$\gamma$.
Finally, we have $\zeta:=G(\sbull,t)\in P$
for each $t\in [0,1]$.
To see this, let $j\in \{1,\ldots, \ell\}$
and $z\in K_j$. If $z\in X \setminus\bigcup_{x\in T}
\wb{B}_\ve(x)$, then $\zeta(z)=\gamma_0(z)\in W_j$.
If, on the other hand, $z\in \wb{B}_\ve(x)$
for some $x\in T$, then
\begin{eqnarray*}
\phi_x(\zeta(z)) & \in &
\conv_2(\conv_2(V^{(4)}_x),V^{(4)}_x)
\; \sub \; \conv_2(V^{(2)}_x,V^{(4)}_x)\\
&\sub & \conv_2(V^{(2)}_x,V^{(2)}_x)\; \sub \; V_x
\end{eqnarray*}
and thus $\zeta(z)\in U_x\sub W_j$
(noting that $z\in \wb{B}_\ve(x) \sub G_x$ implies $j\in I_x$).
Thus $\zeta(K_j)\sub W_j$ for each $j\in \{1,\ldots,\ell\}$
and hence $\zeta\in P$, as required.
This completes
the proof of the claim and hence
also the proof of Lemma~\ref{pivo2}.
\end{proof}
\section{The main result and first consequences}\label{secmainprf}
We shall deduce Theorem~\ref{manthm}
from a more general theorem
dealing with sets $[(X,C),(M,p)]$ of
homotopy classes.
\begin{numba}
If $X$ and $Y$ are topological spaces,
$C \sub X$ a closed set
and $p\in Y$, let
\[
[(X,C),(Y,p)]
\]
be the set of all equivalence classes $[\gamma]$
of continuous mappings $\gamma\colon X\to Y$
such that $\gamma|_C=p$,
using homotopy relative~$C$
as the equivalence relation.
If also $Z$ is a topological space,
$q\in Z$ and $f\colon Y\to Z$ is a continuous map such that
$f(p)=q$, we obtain a map
\[
[(X,C),f]
\colon [(X,C),(Y,p)]\to
[(X,C),(Z,q)]\,,\quad [\gamma]\mto [f\circ\gamma]\,.
\]
We simply write $f_*:=[(X,C),f]$ if the meaning is clear
from the context. If $C=\emptyset$,
it is customary to write $[X,Y]$ instead of $[(X,C),(Y,p)]$.
\end{numba}
\begin{numba}\label{newsett}
Now let $F$ be a finite-dimensional vector space,
$\Sigma$ be a finite simplicial
complex of simplices in~$F$
and $X:=|\Sigma|\sub F$.
Let $C\sub X$ be a subset
which is a union of simplices, i.e.,
$C=\bigcup\{\Delta\in \Sigma\colon \Delta\sub C\}$.
\end{numba}
\begin{thm}\label{newthm}
Let $X=|\Sigma|$ and $C\sub X$ be as in {\bf\ref{newsett}},
$M$ a topological space
and $(M_\alpha)_{\alpha\in A}$ be a directed family of
topological spaces whose union
$M_\infty:=\bigcup_{\alpha\in A}M_\alpha$
is dense in~$M$.
Assume that all inclusion maps $\lambda_\alpha\colon M_\alpha\to M$
and $\lambda_{\beta,\alpha}\colon M_\alpha\to M_\beta$ $($for $\alpha\leq\beta)$
are continuous.
For $p\in M_\infty$, abbreviate $A_p:=\{\alpha\in A\colon p\in M_\alpha\}$.
If $M$ admits well-filled charts,
then
\[
[(X,C), (M,p)]\;=\; \dl_{\alpha\in A_p} \,[(X,C),(M_\alpha,p)]
\]
as a set, for each $p\in M_\infty$.
\end{thm}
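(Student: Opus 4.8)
The plan is to realize the canonical comparison map as a bijection, using the Individual Approximations lemma (Lemma~\ref{pivo2}) to do all the real work. Write $\mu_\alpha\colon [(X,C),(M_\alpha,p)]\to D:=\dl_{\alpha\in A_p}[(X,C),(M_\alpha,p)]$ for the limit maps and $(\lambda_\alpha)_*$, $(\lambda_{\beta,\alpha})_*$ for the maps induced by the inclusions. Note first that $A_p$ is directed (if $\alpha,\gamma\in A_p$ and $\delta\geq\alpha,\gamma$, then $M_\alpha\sub M_\delta$ forces $p\in M_\delta$, so $\delta\in A_p$). Exactly as in~{\bf\ref{reudlsit}}, the $(\lambda_\alpha)_*$ form a cone over the direct system indexed by~$A_p$, so the universal property supplies a unique map $\psi\colon D\to [(X,C),(M,p)]$ with $\psi\circ\mu_\alpha=(\lambda_\alpha)_*$. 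It then remains to show that $\psi$ is surjective and injective.

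For surjectivity, take $[\gamma]\in [(X,C),(M,p)]$ with $\gamma\colon X\to M$ continuous and $\gamma|_C=p$, and pick any $\alpha\in A_p$. Apply Lemma~\ref{pivo2} with $\cE:=C$, $\gamma_0:=\gamma$ and $Q:=C(X,M)_{c.o.}$: the hypothesis holds because $\gamma(C)=\{p\}\sub M_\alpha$ and $\gamma|_C$ is the constant map~$p$, hence continuous into~$M_\alpha$. This yields $\beta\geq\alpha$ and a continuous $\eta\colon X\to M_\beta$ with $\eta|_C=\gamma|_C=p$, together with a homotopy $X\times[0,1]\to M$ relative~$C$ from~$\gamma$ to~$\eta$. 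Since $p\in M_\alpha\sub M_\beta$ gives $\beta\in A_p$, the class $[\eta]\in [(X,C),(M_\beta,p)]$ satisfies $\psi(\mu_\beta([\eta]))=(\lambda_\beta)_*([\eta])=[\gamma]$.

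For injectivity, let $g_1,g_2\in D$ with $\psi(g_1)=\psi(g_2)$. Using directedness of~$A_p$ and~(\ref{bscdl1}), choose a common $\alpha\in A_p$ and continuous $\eta_1,\eta_2\colon X\to M_\alpha$ with $\eta_j|_C=p$ representing $g_j=\mu_\alpha([\eta_j])$. Then $[\lambda_\alpha\circ\eta_1]=[\lambda_\alpha\circ\eta_2]$ in~$M$, say via $G\colon X\times[0,1]\to M$ with $G(\sbull,0)=\eta_1$, $G(\sbull,1)=\eta_2$ and $G(x,t)=p$ for $x\in C$. The key move is to view~$G$ as a map on the prism: fixing a global ordering of $\cV(\Sigma)$, triangulate $X\times[0,1]\sub F\times\R$ by the standard prism decomposition~$\Sigma'$ (cf.\ \cite{Hat}), chosen so that $X\times\{0\}$, $X\times\{1\}$ and $C\times[0,1]$ are subcomplexes. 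Set
\[
\cE\;:=\;(X\times\{0\})\cup(X\times\{1\})\cup(C\times[0,1]);
\]
then $\cE=\bigcup\{\Delta\in\Sigma'\colon\Delta\sub\cE\}$, and $G|_\cE$ takes values in~$M_\alpha$ and is continuous into~$M_\alpha$ (it is $\eta_1$ on the bottom, $\eta_2$ on the top and the constant~$p$ on $C\times[0,1]$; these agree on overlaps, so the Glueing Lemma applies). Applying Lemma~\ref{pivo2} to $\gamma_0:=G$ on~$\Sigma'$ with this~$\cE$ and $Q:=C(X\times[0,1],M)_{c.o.}$ produces $\beta\geq\alpha$ and a continuous $\wt G\colon X\times[0,1]\to M_\beta$ with $\wt G|_\cE=G|_\cE$. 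In particular $\wt G$ is a homotopy in~$M_\beta$ from $\wt G(\sbull,0)=\eta_1$ to $\wt G(\sbull,1)=\eta_2$ which is constantly~$p$ on~$C$, so $(\lambda_{\beta,\alpha})_*([\eta_1])=(\lambda_{\beta,\alpha})_*([\eta_2])$; as $\beta\in A_p$, this gives $g_1=\mu_\beta((\lambda_{\beta,\alpha})_*([\eta_1]))=\mu_\beta((\lambda_{\beta,\alpha})_*([\eta_2]))=g_2$.

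I expect the main obstacle to be the bookkeeping in the injectivity step: one must supply a triangulation of the prism in which all three boundary pieces forming~$\cE$ are genuine subcomplexes (so that $\cE$ is a union of simplices and Lemma~\ref{pivo2} is applicable), and one must verify that $G|_\cE$ is continuous as a map into~$M_\alpha$, not merely into~$M$. Once these points are secured, the argument is a direct transcription of the proof of Proposition~\ref{babcoreg}(b)--(c), with Lemma~\ref{pivo2} playing the role that compact retractivity played there.
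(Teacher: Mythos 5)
Your proposal is correct and follows essentially the same route as the paper: the same cone/universal-property setup for $\psi$, surjectivity via Lemma~\ref{pivo2} with $\cE:=C$, and injectivity by triangulating the prism $X\times[0,1]$ so that $X\times\{0,1\}$ and $C\times[0,1]$ are unions of simplices and applying the approximation lemma with $\cE:=(C\times[0,1])\cup(X\times\{0,1\})$ to push the connecting homotopy into some $M_\beta$. (The paper cites Lemma~\ref{pivotal} at that last step, but the conclusion it invokes is exactly that of Lemma~\ref{pivo2}, which is what you use.)
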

\begin{proof}
We may assume that $p\in M_\alpha$
for each $\alpha\in A$.
The sets $[(X,C),(M,p)]$
form a direct system~$\cS$ of sets,
with the bonding maps
$(\lambda_{\beta,\alpha})_*:=[(X,C),\lambda_{\beta,\alpha}]$.
We let $D:=\dl\,[(X,C),(M_\alpha,p)]$\vspace{-.8mm}
be the direct limit in the category
of sets,
with limit maps $\mu_\alpha\colon [(X,C),(M_\alpha,p)]\to D$.
Since the maps $(\lambda_\alpha)_*:=[(X,C),\lambda_\alpha]\colon
[(X,C),(M_\alpha,p)]\to
[(X,C),(M,p)]$ form a cone over~$\cS$,
there is a unique map
$\psi \colon D \to [(X,C),(M,p)]$
with $\psi\circ \mu_\alpha=(\lambda_\alpha)_*$
for all $\alpha\in A$.\\[2.5mm]
\emph{$\psi$ is surjective}.
Let $[\gamma_0]\in [(X,C),(M,p)]$
be the equivalence class of a continuous map
$\gamma_0\colon X \to M$
with $\gamma_0|_C=p$.
Applying Lemma~\ref{pivo2}
with $Q:=C(X,M)$ and $\cE:=C$,
we obtain $\beta\in A$
and a homotopy
$H\colon X\times [0,1]\to M$
relative~$C$ from~$\gamma_0$ to
some continuous map $\eta\colon X\to M_\beta$.
Then $[\gamma_0]=[\eta]=(\lambda_\beta)_*([\eta])
=\psi(\mu_\beta([\eta]))\in \im(\psi)$.
Thus~$\psi$ is surjective.\\[2.7mm]
\emph{$\psi$ is injective}.
To see this,
let $g,h \in D$ with
$\psi(g)=\psi(h)$.
There is $\alpha\in A$
such that $g=\mu_\alpha([\sigma])$
and $h=\mu_\alpha([\tau])$
for certain $[\sigma],[\tau]\in [(X,C),(M_\alpha,p)]$
with continuous maps
$\sigma,\tau\colon X\to M_\alpha$.
Then $(\lambda_\alpha)_*([\sigma])=(\lambda_\alpha)_*([\tau])$,
whence there is a homotopy
$\gamma_0\colon X \times [0,1]\to M$
relative $C$
from $\sigma$ to $\tau$,
considered as maps to~$M$.
Choose a triangulation $\Sigma'$ of $X \times [0,1]\sub F\times \R$
such that
\[
C \times [0,1]\;=\;
\bigcup \big\{\Delta\in\Sigma'\colon
\Delta\sub C \times [0,1]\big\}
\]
and
\[
X\times \{0,1\} \;=\;
\bigcup \big\{\Delta\in\Sigma'\colon
\Delta\sub X \times \{0,1\} \big\}
\]
(this is always possible, by standard arguments).
Applying
Lemma~\ref{pivotal}\linebreak
to~$\Sigma'$, $\gamma_0$, $Q:=C(X\times [0,1],M)$
and
\[
\cE\; :=\;
(C \times [0,1])\cup
(X\times \{0,1\})\,,
\]
we obtain $\beta\geq \alpha$,
a continuous map $\eta\colon X\times [0,1]\to M_\beta$
and a homotopy
$H \colon (X\times [0,1])\times [0,1]\to M$
relative~$\cE$
from $\gamma_0$ to $\eta$.
Because~$H$ is a homotopy relative~$\cE$,
we have
\[
\eta(x,0)\, =\, \gamma_0(x,0)\, =\, \sigma(x)
\]
and $\eta(x,1)=\gamma_0(x,1)=\tau(x)$
for all $x\in X$, and furthermore
\[
\eta(x,t)\, =\, \gamma_0(x,t)\, =\, p\quad
\mbox{for all $x\in C$ and $t\in [0,1]$.}
\]
Hence $\eta$ is a homotopy relative~$C$
from $\sigma$ to $\tau$, considered as
maps to~$M_\beta$.
Consequently, $[\sigma]=[\tau]$ in $[(X,C),(M_\beta,p)]$
and thus $g=(\lambda_\beta)_*([\sigma])=(\lambda_\beta)_*([\tau])=h$.
\end{proof}
\begin{rem}
Theorem~\ref{newthm}
and its proof easily extend
to sets
$[(X,C),(M,P)]$ of homotopy classes of mappings
between space pairs,
where $X,C$ and~$M$ are as before
and $P\sub M$ is a subset
such that $P\sub M_\theta$
for some~$\theta \in A$
and both~$M$ and~$M_\theta$ induce
the same topology on~$P$.
\end{rem}
{\bf Proof of Theorem~\ref{manthm}.}
Let $D:=\dl\,\pi_k(M_\alpha,p)$\vspace{-.5mm}
and $\psi \colon D \to \pi_k(M,p)$
be as in {\bf\ref{reudlsit}}.
If $k\geq 1$ or if $M$ and each $M_\alpha$
is a topological group
and each $\lambda_\alpha$ and $\lambda_{\beta,\alpha}$
a homomorphism, then also $\psi$
is a homomorphism of groups.
Since $\psi$ is a bijection by Theorem~\ref{newthm}
(and hence an isomorphism of groups
in the cases just described),
Theorem~\ref{manthm} is established.\,\vspace{2.5mm}\Punkt

\noindent
We record another simple consequence.
It mainly is of interest if a manifold
$M$ is a directed union
of manifolds admitting weak direct limit charts.
\begin{cor}\label{complifi}
Let $M$ be a topological space
and $(M_\alpha)_{\alpha\in A}$ be a directed family of
topological spaces
such that $M=\bigcup_{\alpha\in A}M_\alpha$.
Assume that all inclusion maps $M_\alpha\to M$
and $M_\alpha\to M_\beta$ $($for $\alpha\leq\beta)$
are continuous,
and that $M$ admits well-filled charts.
Then the path components of~$M$
are the unions of those of the steps:
\begin{equation}\label{wteqlty}
M_{(p)}\;=\; \bigcup_{\alpha\in A_p}\, (M_\alpha)_{(p)}\quad
\mbox{for all $\,p\in M$.}
\end{equation}
\end{cor}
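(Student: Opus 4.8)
The plan is to establish the two inclusions in~(\ref{wteqlty}) separately. The inclusion $\supseteq$ is immediate and uses nothing beyond continuity of the inclusion maps; the inclusion $\subseteq$ is the substance, and I would obtain it by approximating a connecting path by one that stays inside a single step $M_\beta$, using Lemma~\ref{pivo2}. Note first that the hypotheses of the setting~{\bf\ref{repeatset}} are met here: since $M=\bigcup_{\alpha\in A}M_\alpha=M_\infty$, the union $M_\infty$ is (trivially) dense in~$M$, the inclusion maps are continuous by assumption, and $M$ admits well-filled charts; hence Lemma~\ref{pivo2} is available.

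For the trivial inclusion, fix $\alpha\in A_p$ and let $q\in (M_\alpha)_{(p)}$. A path $[0,1]\to M_\alpha$ from $p$ to $q$ becomes a path in~$M$ upon composing with the continuous inclusion $M_\alpha\to M$, so $q\in M_{(p)}$. Hence $\bigcup_{\alpha\in A_p}(M_\alpha)_{(p)}\sub M_{(p)}$.

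For the reverse inclusion, let $q\in M_{(p)}$ and pick a path $\gamma_0\colon [0,1]\to M$ with $\gamma_0(0)=p$ and $\gamma_0(1)=q$. Since $M=M_\infty$ we have $p,q\in M_\infty$, so by directedness there is $\alpha\in A_p$ with $p,q\in M_\alpha$. Choose a triangulation $\Sigma$ of $[0,1]$ whose set of vertices contains $\{0,1\}$, so that $\cE:=\{0,1\}$ satisfies $\cE=\bigcup\{\Delta\in\Sigma\colon \Delta\sub \cE\}$. The restriction $\gamma_0|_\cE$ is a map from a finite discrete set into $M_\alpha$, hence continuous as a map to~$M_\alpha$. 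Applying Lemma~\ref{pivo2} to $\Sigma$, $\gamma_0$, $\cE$ and $Q:=C([0,1],M)$ yields $\beta\geq\alpha$ and a continuous map $\eta\colon [0,1]\to M_\beta$ with $\eta|_\cE=\gamma_0|_\cE$. Thus $\eta(0)=p$ and $\eta(1)=q$, i.e.\ $\eta$ is a path in $M_\beta$ from $p$ to $q$; since $p\in M_\alpha\sub M_\beta$ we have $\beta\in A_p$, so $q\in (M_\beta)_{(p)}$. This proves $M_{(p)}\sub\bigcup_{\alpha\in A_p}(M_\alpha)_{(p)}$ and, together with the previous paragraph, yields~(\ref{wteqlty}).

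The only point requiring care — and the sole place where well-filled charts enter — is the nontrivial inclusion: one must produce a path inside a \emph{single} $M_\beta$ with the prescribed endpoints $p$ and $q$. This is exactly what the choice $\cE=\{0,1\}$ in Lemma~\ref{pivo2} delivers, since that lemma produces an approximant agreeing with $\gamma_0$ on~$\cE$, so no separate argument for the endpoints is needed. It is worth recording that the corollary refines the $k=0$ instance of Theorem~\ref{manthm}: the latter identifies the \emph{set} of path components, whereas here we identify the path component of~$p$ as a subset of~$M$, the additional information being precisely the endpoint-preserving feature of Lemma~\ref{pivo2}. I do not anticipate any genuine obstacle once the triangulation of $[0,1]$ and the set~$\cE$ are set up so that the hypotheses of Lemma~\ref{pivo2} hold verbatim.
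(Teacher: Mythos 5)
Your proof is correct, but it follows a genuinely different route from the paper's. The paper derives the statement formally from Theorem~\ref{manthm}: given $q\in M_{(p)}$, choose $\beta\geq\alpha$ with $q\in M_\beta$; since $(\lambda_\alpha)_*((M_\alpha)_{(p)})=M_{(p)}=M_{(q)}=(\lambda_\beta)_*((M_\beta)_{(q)})$ and $\pi_0(M)=\dl\,\pi_0(M_\gamma)$ by the $k=0$ case of Theorem~\ref{manthm}, the description~(\ref{bscdl2}) of equality in a direct limit of sets yields $\gamma\geq\alpha,\beta$ with $(\lambda_{\gamma,\alpha})_*((M_\alpha)_{(p)})=(\lambda_{\gamma,\beta})_*((M_\beta)_{(q)})$, hence $q\in(M_\gamma)_{(p)}$. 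You instead bypass the main theorem and apply Lemma~\ref{pivo2} directly to a connecting path, triangulating $[0,1]$ and taking $\cE=\{0,1\}$, so that the relative-$\cE$ feature of that lemma produces a path inside a single step $M_\beta$ with the prescribed endpoints; your verification of its hypotheses ($\cE$ is a union of $0$-simplices, $\gamma_0|_\cE$ is continuous into $M_\alpha$ because $\cE$ is discrete, and $Q:=C([0,1],M)$ is an admissible neighbourhood) is accurate, so there is no gap. The trade-off: the paper's argument is purely formal once Theorem~\ref{manthm} is in place, needing no return to the technical lemmas, and it transfers verbatim to the situation where $M_\infty$ is merely dense (as the paper notes immediately after its proof); your argument is leaner in a different sense, since it uses only the approximation-with-prescribed-boundary-values half of the machinery (Lemma~\ref{pivo2} with nonconstant data on~$\cE$) and never invokes the injectivity of the direct-limit comparison map, which is where the paper's proof does its work. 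One caveat: your closing remark that the corollary strictly refines the $k=0$ instance of Theorem~\ref{manthm} overstates matters --- the paper's proof shows that the subset-level identity already follows from that instance combined with the elementary fact~(\ref{bscdl2}).
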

\begin{proof}
Let $p\in M$, say $p\in M_\alpha$.
It is clear that
$\bigcup_{\beta\geq \alpha}(M_\beta)_{(p)}\sub M_{(p)}$.
To prove the converse inclusion,
let $q\in M_{(p)}$. There exists $\beta\geq\alpha$
such that $q\in M_\beta$.
Since
\[
(\lambda_\alpha)_*((M_\alpha)_{(p)})
\;=\; M_{(p)}\;=\; 
M_{(q)}\;=\; (\lambda_\beta)_*((M_\beta)_{(q)})
\]
and $\pi_0(M)=\dl\;\pi_0(M_\gamma)$\vspace{-.7mm}
by Theorem~\ref{manthm} (applied with $k=0$),
there exists $\gamma\geq \alpha,\beta$
such that
\[
(\lambda_{\gamma,\alpha})_*((M_\alpha)_{(p)})\;=\;
(\lambda_{\gamma,\beta})_*((M_\beta)_{(q)})
\]
(see (\ref{bscdl2})),
where we use the natural mappings
$(\lambda_\alpha)_*\colon \pi_0(M_\alpha)\to \pi_0(M)$,\linebreak
$(\lambda_\beta)_*\colon \pi_0(M_\beta)\to \pi_0(M)$,
$(\lambda_{\gamma,\alpha})_*\colon \pi_0(M_\alpha)\to \pi_0(M_\gamma)$
as well as\linebreak
$(\lambda_{\gamma,\beta})_*\colon \pi_0(M_\beta)\to \pi_0(M_\gamma)$.
Thus $(M_\gamma)_{(p)}=(M_\gamma)_{(q)}$
and thus $q\in (M_\gamma)_{(p)}$,
entailing that equality holds in~(\ref{wteqlty}).
\end{proof}
If $M_\infty:=\bigcup_{\alpha\in A}M_\alpha$
is merely dense in~$M$, the same argument shows that
$(M_\infty)_{(p)}=\bigcup_{\alpha\in A_p}\, (M_\alpha)_{(p)}$
for each $p\in M_\infty$.
\section{When the inclusion map is a weak\\
\hspace*{.3mm}homotopy equivalence}\label{secpalais}
We now extend Palais' result recalled in the introduction:
under suitable hypotheses,
the inclusion map $M_\infty\to M$ is a weak homotopy equivalence.
\begin{prop}\label{genpalais}
Assume that $M$ admits well-filled charts
in the situation of Theorem~{\rm\ref{manthm}},
and that $\cO$ is a topology on $M_\infty$
with the following properties:
\begin{itemize}
\item[\rm(a)]
All of the inclusion maps
$\sigma_\alpha \colon M_\alpha\to (M_\infty,\cO)$ $($for $\alpha\in A)$
as well as
$\sigma \colon (M_\infty,\cO)\to M$ are continuous;
\item[\rm(b)]
$(M_\infty,\cO)=\bigcup_{\alpha\in A}M_\alpha$
is compactly retractive.
\end{itemize}
Then $\sigma$ is a weak homotopy equivalence.
\end{prop}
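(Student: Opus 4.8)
The plan is to identify both $\pi_k((M_\infty,\cO),p)$ and $\pi_k(M,p)$ with the \emph{same} direct limit $D:=\dl_{\alpha\in A_p}\pi_k(M_\alpha,p)$, and to check that under these identifications the map $\sigma_*$ induced by $\sigma$ becomes the canonical comparison between them. Fix $k\in\N_0$ and a base point $p\in M_\infty$; we use the conventions of~{\bf\ref{concon}}, so that $\pi_0$ is treated as a pointed set and the higher $\pi_k$ as groups. The crucial compatibility is that $\sigma\circ\sigma_\alpha=\lambda_\alpha$ for each $\alpha\in A_p$, since all three maps act as the identity on the underlying points of $M_\alpha$; by functoriality this gives $\sigma_*\circ(\sigma_\alpha)_*=(\lambda_\alpha)_*$ on the level of homotopy.

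First I would apply Proposition~\ref{babcoreg} to the compactly retractive union $(M_\infty,\cO)=\bigcup_{\alpha\in A}M_\alpha$, whose hypotheses are supplied by~(a) and~(b). This yields a canonical comparison map $\psi_\infty\colon D\to\pi_k((M_\infty,\cO),p)$, determined by $\psi_\infty\circ\mu_\alpha=(\sigma_\alpha)_*$ for each $\alpha\in A_p$ (where $\mu_\alpha$ are the limit maps of $D$), and Proposition~\ref{babcoreg} asserts that $\psi_\infty$ is a bijection, an isomorphism of groups when $k\geq1$. Next I would apply Theorem~\ref{manthm} to $M$, which admits well-filled charts by hypothesis; this provides a second canonical comparison map $\psi_M\colon D\to\pi_k(M,p)$, determined by $\psi_M\circ\mu_\alpha=(\lambda_\alpha)_*$, and again $\psi_M$ is a bijection, an isomorphism for $k\geq1$.

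It then remains to relate $\sigma_*$ to these two maps. Composing with each limit map, $\sigma_*\circ\psi_\infty\circ\mu_\alpha=\sigma_*\circ(\sigma_\alpha)_*=(\lambda_\alpha)_*=\psi_M\circ\mu_\alpha$ for every $\alpha\in A_p$; by the uniqueness clause in the universal property of the direct limit $D$, this forces $\sigma_*\circ\psi_\infty=\psi_M$. Hence $\sigma_*=\psi_M\circ\psi_\infty^{-1}$ is a composite of two bijections, so it is a bijection, and an isomorphism of groups when $k\geq1$. Since this holds for every $k\in\N_0$ and every base point $p\in M_\infty$---and the base points of $(M_\infty,\cO)$ are exactly the points of $M_\infty$---the map $\sigma$ induces a bijection on $\pi_0$ and an isomorphism on all higher homotopy groups for all base points, i.e.\ $\sigma$ is a weak homotopy equivalence.

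The argument is short because it rests entirely on the two previously established direct-limit identifications; there is no new geometric content. The only point requiring care is bookkeeping: one must verify that Proposition~\ref{babcoreg} and Theorem~\ref{manthm} are being applied to literally the same direct system $(\pi_k(M_\alpha,p))_{\alpha\in A_p}$ with the same bonding maps $(\lambda_{\beta,\alpha})_*$, so that a single object $D$ with a single family of limit maps $\mu_\alpha$ serves for both comparison maps; this is what legitimizes the final appeal to the universal property. After passing to the cofinal subsystem $A_p$ on which $p$ is a common base point, as in~{\bf\ref{reudlsit}}, this is immediate.
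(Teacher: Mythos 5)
Your proposal is correct: both applications you make (Proposition~\ref{babcoreg} to $(M_\infty,\cO)=\bigcup_{\alpha\in A}M_\alpha$, legitimised by hypotheses (a) and (b), and Theorem~\ref{manthm} to $M$) are valid, the two comparison maps $\psi_\infty$ and $\psi_M$ do come from literally the same direct system $\bigl((\pi_k(M_\alpha,p))_{\alpha\in A_p},((\lambda_{\beta,\alpha})_*)\bigr)$, and the identity $\sigma\circ\sigma_\alpha=\lambda_\alpha$ together with the uniqueness clause of the universal property forces $\sigma_*\circ\psi_\infty=\psi_M$, whence $\sigma_*=\psi_M\circ\psi_\infty^{-1}$ is bijective. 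The paper reaches the same conclusion by a more hands-on route: it never constructs $\psi_\infty$ and never cites Proposition~\ref{babcoreg}; instead it verifies surjectivity and injectivity of $\sigma_*$ directly on representatives. Surjectivity there is immediate from Theorem~\ref{manthm} and the factorization $\lambda_\alpha=\sigma\circ\sigma_\alpha$; for injectivity, compact retractivity of $(M_\infty,\cO)$ is used only to corestrict the two given representatives $\gamma_1,\gamma_2$ to a continuous map into some step $M_\alpha$, after which the bijectivity of $\psi$ from Theorem~\ref{manthm} and the direct-limit equality criterion~(\ref{bscdl2}) produce a $\beta\geq\alpha$ with $(\lambda_{\beta,\alpha})_*([\eta_1])=(\lambda_{\beta,\alpha})_*([\eta_2])$, and pushing forward along $\sigma_\beta$ finishes. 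The trade-off: your argument is shorter and makes the structural content transparent ($\sigma_*$ is the canonical comparison between two cones over one direct system), but it consumes the full strength of Proposition~\ref{babcoreg} — in particular the part of compact retractivity that factors \emph{homotopies} $[0,1]^k\times[0,1]\to M_\infty$ through a step, which is needed for injectivity of $\psi_\infty$ — whereas the paper's direct argument only ever needs compact retractivity to factor the representatives themselves, obtaining the homotopy inside a step $M_\beta$ from Theorem~\ref{manthm} rather than from retractivity. Both proofs rest on the same two hypotheses, so this is a matter of economy of means, not of generality.
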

\begin{proof}
We shall re-use notation from {\bf\ref{reudlsit}}.
Let $k\in \N_0$ and $p\in M_\infty$;
equip $M_\infty$ with the topology~$\cO$.
We have to show that $\sigma_*\colon
\pi_k(M_\infty,p)\to \pi_k(M,p)$
is a bijection.\\[2.3mm]
\emph{$\sigma_*$ is surjective.}
If $g \in \pi_k(M,p)$,
then $g=(\lambda_\alpha)_*(h)$
for some $\alpha\in A_p$ and $h\in \pi_k(M_\alpha,p)$,
by Theorem~\ref{manthm}.
Since $\lambda_\alpha=\sigma\circ \sigma_\alpha$,
it follows that $g=\sigma_*((\sigma_\alpha)_*(h))$
is in the image of $\sigma_*$.\\[2.5mm]
\emph{$\sigma_*$ is injective.}
To see this, let $[\gamma_1],[\gamma_2]\in \pi_k(M_\infty,p)$
such that $\sigma_*([\gamma_1])=\sigma_*([\gamma_2])$.
By compact retractivity of
$M_\infty=\bigcup_{\alpha\in A}M_\alpha$,
there exists $\alpha\in A$ such that
both $\gamma_1$ and $\gamma_2$ have image in $M_\alpha$
and their corestrictions
$\eta_j:=\gamma_j|^{M_\alpha}$
are continuous for $j\in \{1,2\}$.
Then $[\gamma_j]=(\sigma_\alpha)_*([\eta_j])$
and hence
$\psi(\mu_\alpha([\eta_j]))
=(\lambda_\alpha)_*([\eta_j])
=\sigma_*((\sigma_\alpha)_*([\eta_j]))
=\sigma_*([\gamma_j])$,
implying $\psi(\mu_\alpha([\eta_1]))=
\psi(\mu_\alpha([\eta_2]))$.
Since $\psi$ is bijective,
it follows that $\mu_\alpha([\eta_1])=\mu_\alpha([\eta_2])$
and thus $(\lambda_{\beta,\alpha})_*([\eta_1])=
(\lambda_{\beta,\alpha})_*([\eta_2])$
for some $\beta\geq\alpha$ (see (\ref{bscdl2})).
Because $\sigma_\alpha=\sigma_\beta\circ \lambda_{\beta,\alpha}$
and hence
$[\gamma_j]=(\sigma_\alpha)_*([\eta_j])=
(\sigma_\beta)_*((\lambda_{\beta,\alpha})_*([\eta_j]))$
for $j\in \{1,2\}$,
we deduce that $[\gamma_1]=[\gamma_2]$.
\end{proof}
Now Corollary~\ref{corpal} (and slightly more)
readily follows.\\[2.7mm]
{\bf Proof of Corollary~\ref{corpal}.}
Let $A:=\cF$ be the set of all finite-dimensional
vector subspaces~$F$ of~$E_\infty$.
If $U$ is open, then $\phi:=\id_U\colon U\to U\sub E$
is a well-filled chart of~$U$ such that each $q\in U$
is contained in some core of~$\phi$
(see Example~\ref{newd}\,(ii)),
with $U_F:=V_F:=U\cap F$ and $\phi_F:=\id_{U_F}$.
If $U$ is semi-locally convex,
then each $q\in U$ has a convex relatively open
neighbourhood $W\sub U$. Then
$W\cap E_\infty$ is dense in~$W$,
and $\phi:=\id_W\colon W\to W\sub E$
is a well-filled chart such that $q$
is contained in some core of~$\phi$,
by Example~\ref{newDD}
(with $U_F:=V_F:=W\cap F$ and $\phi_F:=\id_{U_F}$).
We are therefore in the situation
of Theorem~\ref{manthm}.
Let $\cT$ be the topology~$\cO$ on~$U_\infty$ described
in Corollary~\ref{corpal}.
Or, more generally,
let $\cT$ be any topology on $U_\infty$ which is
coarser than the direct limit topology
on $\dl\,(U\cap F)$\vspace{-.7mm}
(where $U\cap F$ is equipped
with the topology induced by the finite-dimensional
vector subspace $F\sub E_\infty$)
but finer than the topology induced on~$U_\infty$
by the finest vector
topology on~$E_\infty$ (if $E$ is locally convex, one can also
use the finest locally convex vector\linebreak
topology
as a lower bound).\footnote{See~\cite{Bis}
and \cite{KaK} for the relations between these topologies.}
Then $U_\infty=\bigcup_{F\in \cF} (U\cap F)$
is compactly retractive
because so is $E_\infty=\bigcup_{F\in\cF}F$
with the finest locally convex topology (see,
e.g.,
\cite[Proposition~7.25\,(iv)]{HaM}).
Thus Proposition~\ref{genpalais}
applies: The inclusion map $(U_\infty, \cT)\to U$
is a weak homotopy equivalence.\,\vspace{2.5mm}\Punkt
\begin{rem}\label{givestrct}
Many criteria
for compact retractivity are known.
\begin{itemize}
\item[(a)]
For example, the direct limit topology
on the union $M=\bigcup_{n\in \N}M_n$
of an ascending sequence $M_1\sub M_2\sub\cdots$
of Hausdorff topological spaces
is compactly retractive if the direct sequence
is \emph{strict} in the sense
that each inclusion map $M_n\to M_{n+1}$
is a topological embedding
(e.g., by \cite[Lemma~1.7\,(d)]{FUN}
combined with \cite[Lemma~A.5]{NRW2};
cf.\ also \cite{Han}).
\end{itemize}
Further conditions (beyond strictness)
arise from the reduction to
modelling spaces performed in Proposition~\ref{inftesiml}\,(a).
On the level of locally convex spaces,
various criteria for compact retractivity are known.
One such criterion
was already encountered in preceding proof. Here are further ones:
\begin{itemize}
\item[(b)]
The locally convex direct limit topology
on $E=\bigcup_{n\in \N}E_n$
is compactly retractive for each
strict ascending sequence
$E_1\sub E_2\sub\cdots$
of complete locally convex topological
vector spaces (cf.\ Proposition~9~(i) and~(ii) in \cite[Ch.\,II, \S4, no.\,6]{BTV}
and Proposition~6 in \cite[Ch.\,III, \S1, no.\,4]{BTV}).
\item[(c)]
The locally convex direct limit topology
on $E=\bigcup_{n\in \N}E_n$
is compactly retractive for each
ascending sequence
$E_1\sub E_2\sub\cdots$
of Banach spaces, such that all inclusion maps
$E_n\to E_{n+1}$ are compact operators
(see Proposition~7 in \cite[Ch.\,III, \S1, no.\,4]{BTV},
or \cite{Flo}).
In this situation, $E$ is called
a \emph{Silva space} (or also a DFS-space).
\item[(d)]
For (LF)-spaces,
a quite concrete characterization of compact retractivity
is given in \cite[Theorem~6.4]{Wen}:
Let $E_1\sub E_2\sub \cdots$ be Fr\'{e}chet
spaces, with continuous linear inclusion maps.
Equip $E=\bigcup_{n\in \N}E_n$
with the locally convex direct limit topology.
Then $E=\bigcup_{n\in \N}E_n$
is compactly retractive
if and only if
for each $n\in \N$, there exists $m\geq n$
such that for all $k\geq m$,
there is a $0$-neighbourhood $U$ in $E_n$
on which $E_k$ and $E_m$ induce the
same topology.
In this case, $E$ is also regular and complete
\cite[Corollary to Theorem~6.4]{Wen}.
\end{itemize}
Further criteria
and references to the research literature
can be found in~\cite{Bie}.
\end{rem}
\section{Applications to typical Lie groups that are\\
\hspace*{.5mm}directed unions of Lie groups or manifolds}\label{seclie}
In this section, we show that our techniques
apply to all major classes of examples
of Lie groups~$G$
which are an ascending union $G=\bigcup_{n\in\N}G_n$
of Lie groups or manifolds $G_n$
(as compiled in~\cite{COM}).\\[2.5mm]
In Examples~\ref{exx1}--\ref{exx7},
we shall see that $G=\bigcup_{n\in \N}G_n$
has a weak direct limit chart and
$L(G)=\bigcup_{n\in \N}L(G_n)$ is compactly retractive,
whence $G=\bigcup_{n\in \N}G_n$ is compactly retractive
(by Proposition~\ref{inftesiml}\,(a)).
Hence Proposition~\ref{babcoreg}
gives information both concerning the homotopy groups
and the singular homology groups of~$G$.
In Example~\ref{exx8}, the same
reasoning applies to certain Lie groups~$G$
which can be written as a union $G=\bigcup_{n\in \N}M_n$
of Banach manifolds.
In Example~\ref{exx6}, compact retractivity can be violated,
but the group still has a direct limit chart
and thus Theorem~\ref{babythm}
provides information concerning the
homotopy groups.
\begin{example}\label{exx1} (\emph{Direct limits of finite-dimensional Lie groups}).
Consider an ascending sequence
$G_1\sub G_2\sub\cdots$
of finite-dimensional Lie groups,
such that the inclusion maps $G_n\to G_{n+1}$
are smooth homomorphisms.
Give $G=\bigcup_nG_n$ the Lie group structure
making it the direct limit Lie group
$\dl\, G_n$\vspace{-.7mm}
(see \cite[Theorem~4.3]{FUN},
or also \cite{NRW1} and \cite{DIR}
in special cases).
Then $G$ has a direct limit chart by construction
and $L(G)=\dl\,L(G_n)$\vspace{-1.2mm}
is compactly retractive
(see Remark~\ref{givestrct}\,(a) or~(b)).
\end{example}
\begin{example}\label{exdiffeo}
(\emph{Groups of compactly supported
diffeomorphisms}).
If $M$ is a $\sigma$-compact,
finite-dimensional smooth manifold,
there exists a sequence
$K_1\sub K_2\sub\cdots$
of compact subsets of~$M$ such that
$M=\bigcup_{n\in \N}K_n$
and $K_n\sub K_{n+1}^0$ (the interior
in $M$) for each $n\in \N$.
Then $(K_n)_{n\in \N}$ is a cofinal
subsequence of the directed set $\cK$ of all
compact subsets of~$M$.
Let $\Diff_c(M)$ be the Lie group
of all $C^\infty$-diffeomorphisms $\gamma\colon M\to M$
such that the closure of $\{x\in M\colon \gamma(x)\not=x\}$
(the support of~$\gamma$) is compact;
this Lie group is modelled on the LF-space $\cV_c(M)$
of compactly supported smooth vector fields
on~$M$.
Given $K\in \cK$, let $\Diff_K(M)$
be the Lie group of all $\gamma\in \Diff_c(M)$
supported in~$K$,
modelled on the Fr\'{e}chet space $\cV_K(M)$
of smooth vector fields supported in~$K$
(cf.\ \cite{Mic}, \cite{DIF} and \cite{GaN}
for the Lie group structures on these groups).
Then
\[
\Diff_c(M)\;=\;\bigcup_{K\in \cK} \Diff_K(M)
\]
and $\Diff_c(M)$ admits a direct limit chart (cf.\ \cite[\S5.1]{COM}).
Moreover, $\cV_c(M)=\bigcup_{K\in \cK}\cV_K(M)$
is compactly retractive
(see Remark~\ref{givestrct}\,(b)).
\end{example}
\begin{example}\label{exx3} (involving a mere weak direct limit chart).
We mention that $\Diff_c(M)$ can also be made
a Lie group modelled
on the space $\cV_c(M)$ of compactly
supported smooth vector fields, equipped
with the (usually properly coarser) topology
making it the projective limit
\[
\bigcap_{k\in \N_0}\cV_c^k(M)\;=\;
\pl_{k\in \N_0}\, \cV_c^k(M)
\]
of the LB-spaces
of compactly supported $C^k$-vector fields
(see \cite{DIF}, where this Lie group is denoted
$\Diff_c(M)\wt{\;}$).
Then $\Diff_c(M)\wt{\;}
= \bigcup_{K\in \cK} \Diff_K(M)$
and the chart of
$\Diff_c(M)\wt{\;}$ around $\id_M$ described in \cite[\S5.1]{COM}
is a weak direct limit chart
(albeit not a direct limit chart).
It is not hard to see
(with
Remark~\ref{givestrct}\,(b))
that $\pl_{k\in \N_0}\, \cV_c^k(M)=\bigcup_{K\in \cK}
\cV_K(M)$\vspace{-.1mm} is compactly\linebreak
retractive.
Hence
$\Diff_c(M)\wt{\;}=\bigcup_{K\in \cK} \Diff_K(M)$
is compactly retractive
(by Proposition~\ref{inftesiml}\,(a)).
\end{example}
\begin{example}\label{exx4} (\emph{Test function groups}).
Let $M$ and $\cK$ be as in Example~\ref{exdiffeo},
$H$ be a Lie group modelled
on a locally convex space,
and $r\in \N_0\cup\{\infty\}$.
Consider the ``test function group''
$C^r_c(M,H)$ of
$C^r$-maps $\gamma\colon M\to H$
such that the closure of $\{x\in M\colon \gamma(x)\not=1\}$
(the support of $\gamma$) is compact.
Given $K\in \cK$, let $C^r_K(M,H)$
be the subgroup of functions supported in~$K$.
Then $C^r_K(M,H)$ is a Lie group modelled
on $C^r_K(M,L(H))$, and
$C^r_c(M,H)$ is a Lie group
modelled on the locally convex direct
limit $C^r_c(M,L(H))=\dl\,C^r_K(M,L(H))$\vspace{-.7mm}
(see \cite{GCX}; cf.\ \cite{Alb}
for special cases, also \cite{NRW2}).
Then
\[
C^r_c(M,H)\;=\; {\textstyle \bigcup_{K\in \cK}}\, C^r_K(M,H)
\]
admits a direct limit chart (cf.\ \cite[\S7.1]{COM}).
Furthermore, $C^r_c(M,L(H))\!=$ $\bigcup_K\, C^r_K(M,L(H))$
is compactly retractive as a consequence
of Remark~\ref{givestrct}\,(b).
\end{example}
\begin{example}\label{exx5}
(\emph{Weak direct products of Lie groups}).
Given a sequence\linebreak
$(H_n)_{n\in \N}$ of Lie groups,
its weak direct product
$G:=\prod_{n\in \N}^*H_n$
is defined as the group of all
$(x_n)_{n\in \N}\in \prod_{n\in \N}H_n$
such that $x_n=1$ for all but finitely many~$n$;
it has
a natural Lie group structure~\cite[\S7]{MEA}.
Then $G=\bigcup_{n\in \N}G_n$, identifying the partial
product $G_n:=\prod_{k=1}^nH_k$ with a subgroup
of~$G$. By construction, $G=\bigcup_{n\in \N}G_n$
has a direct limit chart. Moreover,
$L(G)=\bigoplus_{n\in \N}L(H_n)
=\dl\, L(G_n)$\vspace{-.5mm}
is compactly retractive,
as locally convex direct sums are
regular \cite[Ch.\,3, \S1, no.\,4, Proposition~5]{BTV}
and induce the given topology on each finite
partial product
(cf.\ Propositions~7 or 8\,(i)
in \cite[Ch.\,2, \S4, no.\,5]{BTV}).
\end{example}
\begin{example}\label{exx7} (\emph{Lie groups of germs of analytic mappings}).
Let $H$ be a\linebreak
complex Banach-Lie group,
$\|.\|$ be a norm on $L(H)$ defining its topology,
$X$ be a complex
metrizable locally convex space
and $K\sub X$ be a non-empty compact set.
Then the set
$\Germ(K,H)$
of germs around~$K$ of $H$-valued complex analytic functions
on open neighbourhoods of~$K$ can be made a Lie group
modelled on the locally convex direct limit
\[
\Germ(K,L(H))\; =\; \dl\, \Hol_b(W_n,L(H))
\]
of the Banach spaces
$\cg_n:=\Hol_b(W_n,L(H))$ of bounded $L(H)$-valued
complex analytic functions on~$W_n$
(with the supremum norm),
where $W_1\supseteq W_2\supseteq \cdots$
is a fundamental sequence of open neighbourhoods of~$K$
in~$X$ such that each connected
component of $W_n$ meets~$K$  (see \cite{HOL}).
The group operation arises from
pointwise multiplication of representatives of germs.
The identity component $\Germ(K,H)_0$
is the union
\[
\Germ(K,H)_0\;=\; \bigcup_{n\in \N}G_n
\]
of the Banach-Lie
groups $G_n:=\langle [\exp_H\circ\, \gamma]\colon \gamma\in \cg_n\rangle$,
and $\Germ(K,H)_0=\bigcup_{n\in\N}G_n$
admits a direct limit chart~\cite[\S10.4]{COM}.
Moreover,
Wengenroth's result recalled in Remark~\ref{givestrct}\,(d)
implies that $\Germ(K,L(H))=\bigcup_{n\in \N}\cg_n$
is compactly retractive
(see \cite{DaG}),\footnote{If
$X$ and~$H$ are finite-dimensional
and $W_{n+1}$ is relatively compact in~$W_n$,
then the restriction maps $\Hol_b(W_n,L(H))\to \Hol_b(W_{n+1},L(H))$
are compact operators~\cite[\S10.5]{COM},
whence $\Germ(K,L(H))=\bigcup_{n\in \N}\cg_n$
is compactly retractive by the simpler Remark~\ref{givestrct}\,(c).}
and thus also $\Germ(K,H)_0=\bigcup_{n\in \N}G_n$.
\end{example}
\begin{example}\label{exx8}
(\emph{Lie groups of germs of analytic diffeomorphisms}).
If $X$ is a complex
Banach space
and $K\sub X$ a non-empty compact subset,
let $\GermDiff(K)$
be the set of germs around~$K$ of complex analytic diffeo\-morphisms
$\gamma\colon U\to V$
between open neighbourhoods $U$ and $V$ of~$K$
(which may depend on $\gamma$),
such that $\gamma|_K=\id_K$.
Then $\GermDiff(K)$
can be made a Lie group
modelled on the locally convex direct limit
\[
\Germ(K,X)_K\; :=\; \dl\, \Hol_b(W_n,X)_K\,,
\]
where the $W_n$ and $\Hol_b(W_n,X)$
are as in Example~\ref{exx7}
and $\Hol_b(W_n,X)_K:=\{\zeta \in \Hol_b(W_n,X)\colon \zeta|_K=0\}$
(see \cite[\S15]{COM} for the special case
$\dim(X)<\infty$, and \cite{Dah} for the general
result).
The group operation arises from
composition of representatives of germs.
Now the set $M_n$
of all elements of $\GermDiff(K)$
having a representative in $\Hol_b(W_n,X)_K$
is a Banach manifold, and
\[
\GermDiff(K)\;=\; \bigcup_{n\in \N}M_n
\]
has a direct limit chart (see \cite{Dah}; cf.\
\cite[Lemma~14.5 and \S15]{COM}).
Again, Wengenroth's characterization\footnote{Or simply
Remark~\ref{givestrct}\,(c), if $\dim(X)<\infty$.}
implies that $\Germ(K,X)_K=\bigcup_{n\in \N}\Hol_b(W_n,X)_K$
is compactly retractive
(see~\cite{Dah}), and hence also $\GermDiff(K)=\bigcup_{n\in \N}M_n$.
\end{example}
\begin{example}\label{exx6}
(\emph{Unit groups of ascending unions of Banach algebras}).
Let
$A_1\sub A_2\sub\cdots$ be
unital complex Banach algebras
(such that all inclusion maps
are continuous homomorphisms of unital algebras).
Give
$A:=\bigcup_{n\in \N}A_n$
the locally convex direct limit topology.
Then $A^\times$ is open in~$A$
and if $A$ is Hausdorff (which we assume now),
then
$A^\times$ is a complex Lie group\linebreak
\cite[Proposition~12.1]{COM}.
Moreover, $A^\times=\bigcup_{n\in \N}A_n^\times$
and the identity map $\id_{A^\times}$ is a direct
limit chart.\\[2.5mm]
If each inclusion map $A_n\to A_{n+1}$
is a topological embedding or each a compact
operator, then $A=\bigcup_{n\in \N}A_n$
and hence also $A^\times=\bigcup_{n\in \N}A_n^\times$
is compactly retractive (and thus Proposition~\ref{babcoreg} applies).
However, for more general
choices of the steps, $A=\bigcup_{n\in \N}A_n$
is not compactly retractive.\\[2.5mm]
To get an example for this pathology,
let $E_1\sub E_2\sub\cdots$ be a sequence
of\linebreak
Banach spaces whose locally convex direct
limit $E=\bigcup_{n\in \N}E_n$ is not regular\linebreak
(for example, a suitable ascending sequence
of weighted function spaces as in \cite[Remark~1.5]{BMS}).
Then $E=\bigcup_{n\in \N}E_n$ is not compactly retractive
(e.g., by Wengenroth's result
recalled in Remark~\ref{givestrct}\,(d)).
Consider $A_n:=\C\times E_n$
as a unital
complex Banach algebra with associative multiplication
$(z_1,x_1)\cdot (z_2,x_2):=(z_1z_2, z_1x_2+z_2x_1)$.
Since
$A := \dl\,A_n = \C\times \dl\,E_n = \C\times E$\vspace{-.5mm}
as a locally convex space,
$A=\bigcup_{n\in \N}A_n=\C\times (\bigcup_{n\in\N}E_n)$
is not compactly retractive
(nor is $A^\times=\bigcup_{n\in\N}A_n^\times$,
in view of Corollary~\ref{noncpreg}).
Of course,
the homotopy groups $\pi_k(A^\times)\isom
\pi_k(\C^\times)\times\pi_k(E)\isom \pi_k(\C^\times)$
(which are infinite cyclic if $k=1$
and trivial otherwise) can be calculated directly
in this example.
\end{example}
\section{Applications to typical Lie groups that\\
\hspace*{.5mm}contain a dense union of Lie groups}\label{secdnsungp}
We now describe typical examples of Lie groups
which contain a dense\linebreak
directed union
of Lie groups, and verify that
Theorem~\ref{manthm} applies.\\[2.5mm]
To test the applicability
of Theorem~\ref{manthm},
it is helpful to have a simple\linebreak
criterion for the existence
of well-filled charts.
The following lemma
serves this purpose.
It even applies to certain
topological groups.
\begin{la}\label{mkeazy}
Let $M$ be a topological group
that contains a directed union $M_\infty:=\bigcup_{\alpha\in A}M_\alpha$
of topological groups as a dense subset.
Assume that all inclusion maps $M_\alpha\to M$
and $M_\alpha\to M_\beta$ $($for $\alpha\leq\beta)$
are continuous homomorphisms.
If there exists a
well-filled chart $\phi\colon M\supseteq U\to V\sub E$
and a core $U^{(2)}$ of~$\phi$ such that $1\in U^{(2)}$,
then $M$ admits well-filled charts.
\end{la}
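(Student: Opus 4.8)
The plan is to exploit the group structure of~$M$: starting from the single well-filled chart whose core contains~$1$, I would manufacture a well-filled chart around an arbitrary point $g\in M$ by left-translating this chart by a suitable element $h\in M_\infty$ lying close to~$g$. The crucial observation is that left translation by an element of $M_\infty$ preserves the entire filtration $(M_\alpha)_{\alpha\in A}$ — since each $M_\alpha$ containing~$h$ is a group — while leaving the image-side data $V$, the $V_\alpha$, $E$, the $E_\alpha$ and the distinguished set $V^{(2)}$ \emph{completely unchanged}; consequently the convexity conditions~(e) and~(f) of Definition~\ref{defwlfl} are inherited verbatim, and only conditions (a), (b), (d) require a short verification.

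First I would fix $g\in M$ and find the translating element. The map $M\to M$, $h\mapsto h^{-1}g$, is continuous and sends $g$ to $1\in U^{(2)}$; since $U^{(2)}$ is open and $M_\infty$ is dense in~$M$, there is $h\in M_\infty$ with $h^{-1}g\in U^{(2)}$, that is, $g\in hU^{(2)}$. Writing $h\in M_{\alpha_1}$ and choosing $\tilde\alpha_0\geq\alpha_0,\alpha_1$ (possible as $A$ is directed), I would then define $\tilde U:=hU$ (open in~$M$) and the homeomorphism $\psi:=\phi\circ L_{h^{-1}}\colon \tilde U\to V$ onto the same set $V\sub E$, where $L_{h^{-1}}$ denotes left translation by~$h^{-1}$.

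For $\beta\geq\tilde\alpha_0$ I would set $\tilde U_\beta:=hU_\beta$; because $h\in M_\beta$ and $M_\beta$ is a topological group, left translation by~$h$ is a homeomorphism of~$M_\beta$, so $\tilde U_\beta$ is open in~$M_\beta$ and $\psi_\beta:=\phi_\beta\circ (L_{h^{-1}}|_{\tilde U_\beta})\colon \tilde U_\beta\to V_\beta$ is a homeomorphism onto the unchanged $V_\beta\sub E_\beta$. Since $L_{h^{-1}}$ carries $\tilde U_\beta$ onto~$U_\beta$, conditions (a) and (b) follow at once from the corresponding identities for~$\phi$ and the same inclusions $\lambda_\beta$, $\lambda_{\beta,\alpha}$. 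For~(d) I would use that $h\in M_\infty$ forces $hM_\infty=M_\infty$, whence $\tilde U_\infty=\bigcup_{\beta\geq\tilde\alpha_0}hU_\beta=h(U\cap M_\infty)=hU\cap M_\infty=\tilde U\cap M_\infty$; here directedness guarantees that the union over $\beta\geq\tilde\alpha_0$ still exhausts $U_\infty$, and likewise $V_\infty$ is unaffected by the passage to~$\tilde\alpha_0$. As $V$, each $V_\beta$, $E$, each $E_\beta$ and $V^{(2)}$ literally coincide with the data of~$\phi$, conditions (e) and (f) hold for~$\psi$ exactly as they do for~$\phi$. Thus $\psi$ is a well-filled chart with core $\psi^{-1}(V^{(2)})=hU^{(2)}\ni g$.

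Since $g\in M$ was arbitrary, the cores of the charts so constructed cover~$M$, and $M$ admits well-filled charts. I expect the only genuinely non-mechanical point to be the very first step: one must translate by an element of~$M_\infty$ rather than by~$g$ itself — translation by $g\notin M_\infty$ would destroy the filtration, as $gM_\alpha$ need bear no relation to the~$M_\beta$ — and density is precisely what supplies such an $h$ within the core-neighbourhood $hU^{(2)}$ of~$g$.
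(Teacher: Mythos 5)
Your proposal is correct and follows essentially the same route as the paper: translate the given chart by an element $h\in M_\infty$ (so that, after enlarging $\alpha_0$, translation preserves each step $M_\beta$ while the image-side data $V$, $V_\beta$, $V^{(2)}$ remain literally unchanged, making (e) and (f) automatic), and use density of $M_\infty$ to cover $M$ by the translated cores. The only difference is organizational — the paper translates by every $g\in M_\infty$ and then cites the standard fact $M=M_\infty U^{(2)}$, whereas you fix an arbitrary $g\in M$ and prove that instance of the fact inline — which is the same argument in a different order.
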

\begin{proof}
We re-use the notation from the introduction.
If $g\in M_\infty$, define
\[
\psi \colon gU\to V\,,\quad x\mto \phi(g^{-1}x)\,.
\]
After increasing $\alpha_0$,
we may assume that $g\in M_{\alpha_0}$.
Then $\psi$ is a well-filled chart with core
$gU^{(2)}$ (containing~$g$),
together with the charts $\psi_\alpha\colon gU_\alpha\to V_\alpha$,
$x\mto\phi_\alpha(g^{-1}x)$.
In fact,
the conditions~(a) and (b) from Definition~\ref{defweakdl}
hold by construction.
Since $gU\cap M_\infty=gU\cap gM_\infty=g(U\cap M_\infty)=gU_\infty=\bigcup_{\alpha\geq\alpha_0}
gU_\alpha$, condition~(d)
from Definition~\ref{defwlfl} holds.
Also (e) and (f) hold with $\psi(gU^{(2)})=V^{(2)}$,
as $V$ and $V_\alpha$
are unchanged and $\phi$ is a well-filled
chart.\\[2.5mm]
Now $M=M_\infty U^{(2)}$
by density of $M_\infty$ in~$M$
(cf.\ \cite[Lemma~3.17]{Str}).
Hence $M=\bigcup_{g\in M_\infty}gU^{(2)}$
is covered by cores of well-filled charts.
\end{proof}
We now prepare the discussion of weighted mapping groups.
If $(X,\|.\|)$ is a normed space, $Y$ a locally convex space,
$q$ a continuous seminorm on~$Y$
and
$p\colon X \to Y$
a continuous homogeneous polynomial,
we set
\begin{equation}\label{defnm1}
\|p\|_q \; :=\; \sup\{q(p(x))\colon x \in \wb{B}^X_1(0)\}\,.
\end{equation}
If $Y$ is a normed space and~$q$ its norm,
we also write $\|p\|:=\|p\|_q$.
\begin{numba}\label{situat}
Let $X=\R^d$, equipped with some norm,
$Y$
be a locally convex space,
$\Omega \sub X$ be open,
$r\in \N_0\cup\{\infty\}$
and $\cW$ be a set of smooth functions
$f\colon \Omega \to \R$
such that
the constant function~$1$
belongs to~$\cW$ and the following conditions
are satisfied:
\begin{itemize}
\item[(a)]
$f(x)\geq 0$
for all $f\in \cW$
and $x\in \Omega$;
\item[(b)]
For each $x\in \Omega$,
there exists
$f\in \cW$ such that $f(x)>0$;
\item[(c)]
For all $N\in \N$,
$f_1,\ldots, f_N\in \cW$
and $k_1,\ldots, k_N\in \N_0$
with \mbox{$k_1,\ldots, k_N\leq r$,}
there exist $C>0$
and $f\in \cW$ such that\footnote{Here $\delta^k_x f \colon X \to \R$
denotes the $k$-th Gateaux differential
of $f$ at~$x\in\Omega$, defined via
$\delta^k_x f (y):=\frac{d^k}{dt^k}\big|_{t=0} f(x+ty)$.}
\[
\|\delta^{k_1}_xf_1\| \cdot\ldots\cdot \|\delta^{k_N}_xf_N\| \;\leq\;
C\, f(x)\quad\mbox{for all $x\in \Omega$.}
\]
\end{itemize}
Let $C^r_\cW(\Omega,Y)$
be the set of all $C^r$-maps
$\gamma\colon \Omega\to Y$ such that
\[
\|\gamma\|_{f,k,q}\;:=\; \sup_{x\in \Omega}\, f(x)\, \|\delta^k_x\gamma \|_q \;<\;\infty
\]
for each $f\in \cW$, $k\in \N_0$ such that $k\leq r$,
and continuous seminorm~$q$ on~$Y$.
Let $C^r_\cW(\Omega,Y)^\bullet$ be the set of
all $\gamma\in C^r_\cW(\Omega,Y)$ such that
moreover
\[
f(x)\, \|\delta^k_x\gamma \|_q \to 0 \quad\mbox{as $\, x\to \infty$}
\]
in the Alexandroff compactification $\Omega \cup\{\infty \}$
of~$\Omega$.
Then $C^r_\cW(\Omega,Y)$
and $C^r_\cW(\Omega,Y)^\bullet$ are vector spaces
and the seminorms $\|.\|_{f,k,q}$ turn them into
locally convex spaces which are complete
if $Y$ is complete
(cf.\ \cite{Wa2}).
If $Q\sub Y$ is open, then
$C^r_\cW(\Omega, Q)^\bullet:=\{\gamma\in C^r_\cW(\Omega,Y)^\bullet\colon
\gamma(\Omega)\sub Q\}$ is open in
$C^r_\cW(\Omega, Y)^\bullet$.
\end{numba}
The conditions (a)-(c) imposed on $\cW$
imply a crucial property:
\begin{la}\label{isdnss}
$C^\infty_c(\Omega,Y)$ is dense
in $C^r_\cW(\Omega,Y)^\bullet$.
\end{la}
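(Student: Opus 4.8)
The plan is to split the approximation into a cut-off step and a smoothing step. First I would show that the compactly supported $C^r$-maps are dense in $C^r_\cW(\Omega,Y)^\bullet$, and then, when $r<\infty$, that a compactly supported $C^r$-map can be approximated by compactly supported \emph{smooth} maps. The smoothing step is routine here: a map supported in a fixed compact $L\sub\Omega$ satisfies $\|\psi\|_{f,k,q}\le(\sup_L f)\,\sup_{x\in L}\|\delta^k_x\psi\|_q$, so on such functions the weighted seminorms are dominated by ordinary $C^r$-seminorms, and convolution with mollifiers (keeping supports inside a slightly larger compact) converges in every $C^r$-seminorm; this is exactly the kind of statement furnished by \cite{SMO} (cf.\ \cite{Woc}). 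When $r=\infty$ the cut-off already lands in $C^\infty_c(\Omega,Y)$ and the second step is vacuous. Thus the substance of the lemma lies in the cut-off step.

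For the cut-off step I would fix $\gamma\in C^r_\cW(\Omega,Y)^\bullet$, finitely many seminorms $\|\cdot\|_{f_i,k_i,q_i}$ and $\ve>0$, and seek $\chi\in C^\infty_c(\Omega,[0,1])$, equal to $1$ on a large compact set, with $\|(1-\chi)\gamma\|_{f_i,k_i,q_i}<\ve$ for all $i$; then $\chi\gamma\in C^r_c(\Omega,Y)$ is the desired approximant. Expanding $\delta^k_x((1-\chi)\gamma)$ by the Leibniz rule produces the term $|1-\chi(x)|\,f(x)\|\delta^k_x\gamma\|_q$, which is supported where $\chi\neq1$ and is therefore small by the decay ($\bullet$) property of $\gamma$, together with transition terms $\binom{k}{j}\,\|\delta^j_x\chi\|\,f(x)\|\delta^{k-j}_x\gamma\|_q$ for $1\le j\le k$, supported in $\{0<\chi<1\}$. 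In the direction $|x|\to\infty$ one may take $\chi$ built from $\psi(|x|/R)$, whose differentials are bounded by $R^{-j}\le1$; since the transition annulus lies near $\infty$, the factor $f(x)\|\delta^{k-j}_x\gamma\|_q$ there is small and these terms cause no trouble. (For $r=0$ only the $j=0$ term occurs, so the cut-off is immediate.)

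The main obstacle is the behaviour near $\partial\Omega$: the support of any cut-off is at positive distance from $\partial\Omega$, so the transition zone is squeezed against the boundary and the differentials $\|\delta^j_x\chi\|$ are forced to blow up. The resolution is to use Whitney-type cut-offs adapted to the distance $\rho(x):=d(x,\partial\Omega)$, obtained from a Whitney decomposition, for which $\|\delta^j_x\chi\|\le C_j\,\rho(x)^{-j}$, and to exploit that \emph{all} differentials of $\gamma$ up to order $r$ vanish at infinity, hence along any segment running to a boundary point. Integrating $j$ times inside $B(x,\rho(x)/2)\sub\Omega$ (the fundamental theorem of calculus applied to $t\mapsto\delta^{k-j}_{x+ty}\gamma$, whose limiting boundary value is $0$) yields $\|\delta^{k-j}_x\gamma\|_q\le C_j\,\rho(x)^{j}\sup_{z\in B(x,\rho(x)/2)}\|\delta^k_z\gamma\|_q$. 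The powers of $\rho(x)$ then cancel the blow-up of $\|\delta^j_x\chi\|$, leaving a bounded multiple of the top-order quantity $\sup_z\|\delta^k_z\gamma\|_q$; choosing the transition zone far out, so that these $z$ lie near $\infty$ in $\Omega\cup\{\infty\}$, makes it small. This cancellation, valid precisely because $k\le r$, is the heart of the argument.

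Two bookkeeping points remain. To turn $f(x)\,\sup_{z\in B}\|\delta^k_z\gamma\|_q$ into a genuine weighted seminorm I must compare $f(x)$ with $f(z)$ across the small ball; this is where condition~(c) enters, bounding $\|\delta^1_z f\|$ (and the further weight-derivatives produced) by some $g\in\cW$, so that the variation of the weight is controlled by an admissible weight against which $\gamma$ also decays, keeping every estimate within the weighted seminorms. Second, I would realise both directions simultaneously by fixing an exhaustion $K_\nu\nearrow\Omega$ of $\Omega$ by compacta together with matching cut-offs $\chi_\nu$, and checking that the transition zones shrink to $\infty$; the uniform decay of $f_i(x)\|\delta^l_x\gamma\|_{q_i}$ off compacta for all $l\le k_i$ then gives $\chi_\nu\gamma\to\gamma$ in $C^r_\cW(\Omega,Y)^\bullet$. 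Combined with the smoothing step, this establishes the lemma.
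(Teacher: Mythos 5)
Your architecture (cut off near $\partial\Omega$ and near spatial infinity, then mollify) is sensible, but the step you dismiss as ``bookkeeping'' --- comparing $f(x)$ with $f(z)$ across the integration path --- is precisely where the difficulty of the lemma lives, and your treatment of it has a genuine gap. First, a repairable slip: the boundary value $0$ of $\delta^{k-j}\gamma$ is not attained inside $B(x,\rho(x)/2)$, so the fundamental theorem of calculus must be applied along a segment running from $x$ all the way to a nearest boundary point; the resulting supremum is over that segment (points $z$ with $\rho(z)\le\rho(x)$), not over a ball. Second, and seriously: condition~(c) does not yield the weight comparison you need. It gives only $\|\delta^1_w f\|\le C\,g(w)$ for some $g\in\cW$, whence $f(x)\le f(z)+C\rho(x)\sup_{w\in[z,x]}g(w)$; the error term couples the weight $g$ evaluated at one point $w$ with $\|\delta^k_z\gamma\|_q$ evaluated at a \emph{different} point $z$, and such a mixed quantity is not dominated by any weighted seminorm of $\gamma$ (the decay hypothesis controls $g(w)\|\delta^k_w\gamma\|_q$, both at the same point). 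Iterating the comparison gains a factor $\rho(x)$ per step but introduces an infinite tower of weights $g,h,\dots$, each dominating the derivative of the previous one, whose suprema on the transition zone may grow faster than the powers of $\rho(x)$ you gain, so the iteration does not close. Conditions (a)--(c) genuinely admit weight families with unbounded oscillation ratio near $\partial\Omega$ (thin spikes of height $W_n$ at points $x_n\to\partial\Omega$, the derivative bounds being absorbed by further, even spikier weights), and for such $\cW$ the estimate $\rho(x)^{-j}f(x)\|\delta^{k-j}_x\gamma\|_q\to 0$ that your cutoff requires is not a formal consequence of your chain of inequalities; to the extent it holds, it is because the tower generated by (c) forces $\gamma$ to be extraordinarily flat on neighbourhoods of the spikes --- a different and more delicate mechanism than the one you describe. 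Note that your argument \emph{is} complete when $\Omega=\R^d$: there one may take $\chi_R(x)=\chi(x/R)$ with $\|\delta^j\chi_R\|\le C_j R^{-j}\le C_j$, and every transition term is controlled directly by the decay of $f\,\|\delta^{k-j}\gamma\|_q$, with no cancellation against $\rho$ needed. But the lemma is asserted for arbitrary open $\Omega\sub\R^d$.

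For comparison, the paper does not argue this way at all: it reduces to the scalar-valued case and invokes \cite[V.7\,a), p.\,224]{GWS}, treating general $Y$ by passing to a completion $\wt{Y}$ and then replacing the a priori only $\wt{Y}$-valued approximants by nearby elements of $Y$. That last point also exposes a secondary gap in your smoothing step: for a general, possibly non-complete locally convex $Y$, the convolution of a $Y$-valued map with a mollifier is a weak integral which need not exist in $Y$, so your mollified approximants may fail to be $Y$-valued; one needs the completion trick together with a final approximation (e.g.\ by Riemann sums of the convolution integral, i.e.\ finite combinations of values of the original map) to land back in $C^\infty_c(\Omega,Y)$. Citing \cite{SMO} for the mollification is also misplaced: that companion paper concerns smoothing of sections and of Lie-group-valued maps, whereas the linear mollification on $\R^d$ is classical --- but only after the $Y$-valuedness issue above is addressed.
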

{\bf Proof} (sketch).
If $Y$ is finite-dimensional,
the assertion is immediate
from the scalar-valued
case treated in \cite[V.7\,a), p.\,224]{GWS}.
For the general case,
one first replaces $Y$ with a completion $\wt{Y}$
and reworks the proof of
\cite[V.7\,a), p.\,224]{GWS},
with minor modifications.\footnote{The completeness of $\wt{Y}$
ensures that the relevant vector-valued (weak) integrals
exist. As one continuous
seminorm~$q$ on~$Y$ suffices to describe a typical
neighbourhood of a given function in $C^r_\cW(\Omega,Y)^\bullet$,
the proof goes through
if we replace the absolute value~$|.|$~by~$q$.}
Then, in the last line of \cite[p.\,226]{GWS},
one replaces $(T_{m_1,m_2}f)(x_i^{(m_4)})\in \wt{Y}$
by a nearby element in~$Y$.\,\vspace{1mm}\Punkt
\begin{example}\label{decreas}
(\emph{Groups of rapidly
decreasing Lie group-valued maps}).
Given a Lie group $H$,
let $C^r_\cW(\Omega,H)^\bullet$
be the set of all
$C^r$-maps $\gamma\colon \Omega \to H$
for which there exists a chart $\kappa\colon P\to Q\sub L(H)$
of~$H$ around~$1$ with $\kappa(1)=0$,
a compact set $K\sub \Omega$
such that $\gamma(\Omega \setminus K)\sub P$,
and a compactly supported, smooth function
$h\colon \Omega\to [0,1]$ taking the constant value $1$
on some neighbourhood of $K$, such that
$(1-h)\cdot (\kappa\circ \gamma|_{\Omega\setminus K})
\in C^r_\cW(\Omega\setminus K,L(H))^\bullet$
(cf.\ \cite[Definition~6.4.1]{Wa2},
where this set is denoted by
$C^r_\cW(\Omega,H)^\bullet_{\text{max}}$).
Define
\[
C^r_\cW(\Omega,P)^\bullet \; := \;
\{\gamma\in C^r_\cW(\Omega, H)^\bullet\colon \gamma(\Omega)\sub P\}\, .
\]
Then $C^r_\cW(\Omega,H)^\bullet$ can be made a Lie group
modelled on $C^r_\cW(\Omega,L(H))^\bullet$
in a natural way, such
that, for some chart $\kappa\colon P\to Q$
as just described,
\[
\phi\colon U:=C^r_\cW(\Omega,P)^\bullet
\to C^r_\cW(\Omega,Q)^\bullet=:V\,,\quad
\gamma\mto \kappa \circ \gamma
\]
is a chart of $C^r_\cW(\Omega,H)^\bullet$
around~$1$
(see \cite{Wa2}; cf.\ \cite{BCR} for special
cases).\footnote{The Lie groups $\cS(\R^d,H;\cW)$
constructed in \cite{BCR}
coincide with the groups $C^\infty_\cW(\R^d,H)^\bullet$
by \cite[Lemma 6.4.23]{Wa2}.}\\[2.5mm]
To get some information on the homotopy groups
of $C^r_\cW(\Omega,H)^\bullet$,
let $\cK$ be the set of compact subsets of~$\Omega$,
directed by inclusion.
In~\cite{SMO}, it is shown that
$C^\infty_c(\Omega,H)=\bigcup_{K\in \cK}C^\infty_K(\Omega,H)$
is dense
in $C^r_\cW(\Omega,H)^\bullet$.
The restriction of~$\phi$ to the map
\[
\phi_K:=C^\infty_K(\Omega,\kappa)\colon
C^\infty_K(\Omega,P)\to C^\infty_K(\Omega,Q)
\]
from an open subset of $C^\infty_K(\Omega,H)$
to an open subset of $C^\infty_K(\Omega,L(H))$
is a chart of $C^\infty_K(\Omega,H)$
(see \cite{GaN}; cf.\ \cite[\S3]{GCX}).
Since $C^\infty_K(\Omega,P)=U\cap C^\infty_K(\Omega,H)$
and $C^\infty_K(\Omega,Q)=V\cap C^\infty_K(\Omega,L(H))$,
we are in the situation of Example~\ref{newd}\,(ii).
Thus $\phi$
is a well-filled chart
admitting cores around each $\gamma\in U$,
notably around $1\in U$.
Hence $C^r_\cW(\Omega,H)^\bullet$
admits well-filled charts (by Lemma~\ref{mkeazy})
and thus
\begin{equation}\label{tbimpr}
\pi_k(C^r_\cW(\Omega,H)^\bullet)=\dl\, \pi_k(C^\infty_K(\Omega,H))
=\pi_k(C^\infty_c(\Omega,H))\, ,
\end{equation}
using Theorem~\ref{manthm}
for the first equality
and Example~\ref{exx4} for the second.
\end{example}
If $\Omega=X$, then
the homotopy groups can be calculated more
explicitly.
\begin{thm}\label{thmD}
If $\Omega=X=\R^d$ in the preceding situation,
then
\[
\pi_k (C^r_\cW(\R^d,H)^\bullet)\;
\isom \; \pi_{k+d}(H)\quad\mbox{for all $k\in \N_0$.}
\]
\end{thm}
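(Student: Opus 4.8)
The plan is to exploit the already-established identity~(\ref{tbimpr}) twice and then reduce to a standard fact of homotopy theory. Equation~(\ref{tbimpr}) gives $\pi_k(C^r_\cW(\R^d,H)^\bullet)=\pi_k(C^\infty_c(\R^d,H))$, so the entire theorem amounts to proving $\pi_k(C^\infty_c(\R^d,H))\isom\pi_{k+d}(H)$. The key preliminary observation I would record is that the continuous group $C_0(\R^d,H)$ of maps decaying to~$1$ at infinity is precisely the instance $\cW=\{1\}$ (the singleton consisting of the constant function~$1$) and $r=0$ of the construction in~{\bf\ref{situat}} and Example~\ref{decreas}: here $\|\gamma\|_{1,0,q}=\sup_{x}q(\gamma(x))$, so $C^0_{\{1\}}(\R^d,H)^\bullet$ consists exactly of the continuous maps with $q(\gamma(x))\to0$ at infinity for each continuous seminorm~$q$ on $L(H)$, i.e.\ it equals $C_0(\R^d,H)$. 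One checks immediately that $\cW=\{1\}$ satisfies conditions~(a)--(c) of~{\bf\ref{situat}} when $r=0$. Hence the reasoning of Example~\ref{decreas} applies to this weight system as well, and~(\ref{tbimpr}) yields $\pi_k(C_0(\R^d,H))=\pi_k(C^\infty_c(\R^d,H))$.

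Combining the two instances of~(\ref{tbimpr}), I would then have
\[
\pi_k(C^r_\cW(\R^d,H)^\bullet)\;=\;\pi_k(C^\infty_c(\R^d,H))\;=\;\pi_k(C_0(\R^d,H)),
\]
so it remains only to compute the homotopy groups of the continuous group $C_0(\R^d,H)$. For this I would pass to the one-point compactification $S^d=(\R^d)^+$: extending a map by the value~$1$ at infinity identifies $C_0(\R^d,H)$ with the based mapping group $\mathrm{Map}_*(S^d,H)$ equipped with the compact-open topology (the Lie-group topology of $C_0(\R^d,H)$ is the topology of uniform convergence, which on the compact space~$S^d$ coincides with the compact-open topology, so the identification is an isomorphism of topological groups). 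Thus $C_0(\R^d,H)\isom\Omega^d H$, the $d$-fold based loop group of~$H$, and the standard exponential law for pointed mapping spaces with compact domain gives
\[
\pi_k(\Omega^d H)\;=\;[S^k,\mathrm{Map}_*(S^d,H)]_*\;=\;[S^k\wedge S^d,H]_*\;=\;[S^{k+d},H]_*\;=\;\pi_{k+d}(H),
\]
which completes the argument.

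The computation is short precisely because the substantive analytic and homotopy-theoretic work is encapsulated in the reduction~(\ref{tbimpr}). The only points requiring care are that the weight system $\cW=\{1\}$ with $r=0$ genuinely falls under the hypotheses of~{\bf\ref{situat}}, so that the density statement from~\cite{SMO} and the well-filled-chart argument of Example~\ref{decreas} are indeed available for $C_0(\R^d,H)$, and that the Lie-group topology of $C_0(\R^d,H)$ coincides with the compact-open topology transported from $\mathrm{Map}_*(S^d,H)$, so that the purely topological identity $\pi_k(\Omega^d H)=\pi_{k+d}(H)$ may be invoked. I do not anticipate a genuine obstacle at either point; the real content of Theorem~\ref{thmD} lies entirely in the machinery leading to~(\ref{tbimpr}).
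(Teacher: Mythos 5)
Your proposal is correct, and its overall chain of isomorphisms
\[
\pi_k(C^r_\cW(\R^d,H)^\bullet)\isom\pi_k(C^\infty_c(\R^d,H))\isom\pi_k(C_0(\R^d,H))\isom\pi_{k+d}(H)
\]
is the same as the paper's; the genuine difference lies in how the middle isomorphism is justified. The paper obtains $\pi_k(C^\infty_c(\R^d,H))\isom\pi_k(C_0(\R^d,H))$ by citing an external approximation theorem of Neeb, \cite[Theorem~A.10]{NeG}, whereas you obtain it by running the paper's own machinery a second time: you observe that the trivial weight system $\cW=\{1\}$ with $r=0$ satisfies conditions (a)--(c) of {\bf\ref{situat}} (this check is correct, since $\delta^0_xf$ is the constant $f(x)$, so $\|\delta^0_x 1\|=1$), that $C^0_{\{1\}}(\R^d,H)^\bullet=C_0(\R^d,H)$ with the topology of uniform convergence, and hence that the instance of (\ref{tbimpr}) for this weight system gives $\pi_k(C_0(\R^d,H))=\dl\,\pi_k(C^\infty_K(\R^d,H))=\pi_k(C^\infty_c(\R^d,H))$. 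This is legitimate, because Example~\ref{decreas} (the Lie group structure from \cite{Wa2}, the density statement from \cite{SMO}, the well-filled chart argument via Lemma~\ref{mkeazy}, and Theorem~\ref{manthm}) is stated for the full generality of {\bf\ref{situat}} and so covers $(\cW,r)=(\{1\},0)$. What your route buys is self-containedness: no appeal to \cite{NeG} is needed, and the argument parallels the paper's own Example~\ref{finEXX} (which plays the same game for $C^r_c$ rather than $C_0$). What it costs is that the density input from \cite{SMO} must be available for $r=0$, i.e.\ that $C^\infty_c(\R^d,H)$ is dense in $C_0(\R^d,H)$ -- essentially the smoothing theorem of \cite{Woc} -- which is the same kind of analytic content that Neeb's theorem encapsulates; so the two routes are of comparable depth, and yours merely redistributes where that content enters. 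Your final step (one-point compactification, $C_0(\R^d,H)\isom\mathrm{Map}_*(\bS_d,H)$, and the exponential law $[S^k\wedge S^d,H]_*=\pi_{k+d}(H)$) just makes explicit what the paper compresses into ``standard facts from homotopy theory,'' and is fine.
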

\begin{proof}
Let $\bS_d\sub \R^{d+1}$ be the $d$-dimensional sphere,
$*\in \bS_d$ be a point
and $C_*(\bS_d,H)$ be the group of $H$-valued continuous
maps on $\bS_d$ taking~$*$ to~$1$ (equipped with the
topology of uniform convergence).
Then
\begin{eqnarray*}
\pi_k(C^r_\cW(\R^d,H)^\bullet)
&\isom & \pi_k(C^\infty_c(\R^d,H))
\; \isom \; \pi_k (C_0(\R^d,H))\\
&\isom &
\pi_k(C_*(\bS_d,H))\;\isom
\;\pi_{k+d}(H)\,,
\end{eqnarray*}
using (\ref{tbimpr})
for the first isomorphism,
\cite[Theorem~A.10]{NeG}
for the second, and standard facts from
homotopy theory for the last.
\end{proof}
\begin{rem}\label{solveBCR}
Define $f_m\colon \R^d \to \R$
via $f_m(x):=(1+\|x\|^2)^m$
for $m\in \N_0$ (where $\|.\|$ is a euclidean
norm on $\R^d$),
and $\cW:=\{f_m\colon m\in \N_0\}$.
Then $C^\infty_\cW(\R^d,L(H))^\bullet$
is the Schwartz space $\cS(\R^d,L(H))$ of rapidly decreasing
smooth $L(H)$-valued maps on $\R^d$.
As a special case of Theorem~\ref{thmD},
the group $\cS(\R^d,H):=C^\infty_\cW(\R^d,H)^\bullet$
satisfies
\[
\pi_k(\cS(\R^d,H))\;=\; \pi_{k+d}(H)\quad \mbox{for all $\, k\in \N_0$.}
\]
This had been conjectured in \cite[p.\,130]{BCR},
and was open since 1981.
\end{rem}
\begin{example} (\emph{Weighted diffeomorphism groups}).
Let
$\Omega =X=Y$ and $r:=\infty$
in {\bf\ref{situat}},
and let
$\Diff_\cW(X)^\bullet$
be the set of all
$C^\infty$-diffeomorphisms
$\gamma\colon X\to X$
with $\gamma-\id_X\in C^\infty_\cW(X,X)^\bullet$
and $\gamma^{-1}-\id_X\in C^\infty_\cW(X,X)^\bullet$
(in \cite{Wa2}, this set is denoted $\Diff_\cW(X)^0$).
Then
\[
V\; :=\; \{\gamma\in C^\infty_\cW(X,X)^\bullet \colon \gamma+\id_X\in
\Diff_\cW(X)^\bullet\}
\]
is open in $C^\infty_\cW(X,X)^\bullet$
and $\phi\colon U:=\Diff_\cW(X)^\bullet\to V$, $\gamma\mto\gamma-\id_X$
a global chart for $\Diff_\cW(X)^\bullet$,
making it a Lie group (see \cite{Wa2}).\footnote{We
mention that special cases of such groups have been used
by physicists~\cite{Gol}. The weighted
diffeomorphism group of~$\R$ modelled on $\cS(\R,\R)$
has also been treated in~\cite{Mi2}.}
Because $C^\infty_c(X,X)$
is dense in $C^\infty_\cW(X,X)^\bullet$,
it follows that $\Diff_c(X)=\bigcup_{K\in \cK}\Diff_K(X)$ is dense
in $\Diff_\cW(X)^\bullet$,
where $\cK$ is the set of compact
subsets of~$X$ and $\Diff_c(X)$ as well as $\Diff_K(X)$
are as in Example~\ref{exdiffeo}.
Since,
for each $K\in \cK$,
the restriction of $\phi$ to a map
\[
\Diff_K(X)\to V\cap C^\infty_K(X,X)
\]
is a chart of $\Diff_K(X)$,
we are in the situation of
Example~\ref{newd}\,(ii)
and thus
$\pi_k(\Diff_\cW(X)^\bullet)=\dl_{K\in\cK}\,\pi_k(\Diff_K(X))$
for each $k\in \N_0$, by Theorem~\ref{manthm}.
\end{example}
\begin{rem}
We mention that
(unlike Example~\ref{decreas})
the preceding\linebreak
example
can also be deduced from Palais' classical theorem.
To this end,
let $\cF$ be the set of finite-dimensional vector subspaces
of $C^\infty_c(X,X)$, and $V_\infty:=V\cap C^\infty_c(X,X)$.
Because
$C^\infty_c(X,X)$ is dense in $C^\infty_\cW(X,X)^\bullet$,
using Palais' Theorem twice we see that
\begin{eqnarray*}
\pi_k (\Diff_\cW(X)^\bullet) &\isom&
\pi_k (V)
\, \isom \, \dl_{F\in \cF} \pi_k (V \cap F)
\, = \,  \dl_{F\in \cF} \pi_k (V_\infty \cap F)\\
&\isom & \pi_k (V_\infty)
\, \isom \, \pi_k (\Diff_c(X))\, .
\end{eqnarray*}
Hence
$\pi_k (\Diff_\cW(X)^\bullet)=\dl\, \pi_k (\Diff_K(X))$
(see Example~\ref{exdiffeo}).
Notably,
the inclusion map
$\Diff_c(X)\to \Diff_\cW(X)^\bullet$
is a weak homotopy equivalence.
\end{rem}
\begin{example}\label{finEXX}
Let $M$ be a $\sigma$-compact,
finite-dimensional smooth manifold, $r\in \N_0$
and $H$ be a Lie group. Then the inclusion map
\[
C^\infty_c(M,H)\to C^r_c(M,H)
\]
is a weak homotopy equivalence.\\[2.5mm]
To see this, let $\cK$ be the set of compact subsets of~$M$.
By~\cite{SMO},
$C^\infty_c(M,H)=\bigcup_{K\in \cK}C^\infty_K(M,H)$
is dense in $C^r_c(M,H)$.
Let $\kappa\colon P\to Q$ be a chart of~$H$
around~$1$ such that $P=P^{-1}$,
$\kappa(1)=0$ and $\kappa$ extends to a chart
with domain~$R$, such that $PP\sub R$.
Then
\[
\phi\; :=\; C^r_c(M,\kappa)\colon C^r_c(M,P)\to C^r_c(M,Q)\,,\quad
\gamma\mto\kappa\circ\gamma
\]
is a chart of $C^r_c(M,H)$
and
\[
\phi_K\; :=\; C^\infty_K(M,\kappa)\colon C^\infty_K(M,P)\to C^\infty_K(M,Q) \quad
\]
is a chart of $C^\infty_K(M,H)$,
for each compact subset $K\sub M$ (see \cite{GCX}).
It is clear that all conditions described
in Example~\ref{newd}\,(ii) are satisfied,
and thus~$\phi$ is a well-filled
chart admitting a core around~$1$.
Hence $C^r_c(M,H)$ admits well-filled
charts (by Lemma~\ref{mkeazy}), and thus
\[
\pi_k(C^r_c(M,H))\,\isom\,
\dl\,\pi_k(C^\infty_K(M,H))\,\isom\,
\pi_k(C^\infty_c(M,H))
\;\; \mbox{for each $k\in \N_0$,}
\]
by Theorem~\ref{manthm} and Example~\ref{exx4}.
The assertion follows.
\end{example}
\emph{Acknowledgement.}
The author is grateful to
K.-D. Bierstedt ($\dag$)
and\linebreak
S.-A. Wegner (Paderborn) for
advice on regularity properties
of (LF)-spaces. He thanks
A. Alldridge (Paderborn)
for the suggestion to
generalize Theorem~\ref{manthm}
to Theorem~\ref{newthm}.
The research was supported by
the German Research Foundation (DFG),
projects GL 357/4-1, GL 357/5-1
and GL~357/7-1.
{\footnotesize{\bf Helge Gl\"{o}ckner},
Universit\"{a}t Paderborn,
Institut f\"{u}r Mathematik,
Warburger Str.\ 100,
33098 Paderborn, Germany. E-Mail:
{\tt glockner@math.uni-paderborn.de}}
\end{document}